\documentclass[11pt,oneside]{article}

\usepackage{amsthm,amsmath,amssymb,amsfonts}
\usepackage{mathtools,xfrac}
\usepackage{url,setspace,color}
\usepackage{subfigure,graphicx,epsfig,tikz,caption}
\usepackage[normalem]{ulem}
\usepackage{enumerate}
\usepackage{geometry,algorithmic,algorithm,verbatim}
\usepackage{makeidx,latexsym}
\usepackage[colorlinks=true]{hyperref}

\usepackage{authblk}
\usepackage[in]{fullpage}
\makeatletter
\def\imod#1{\allowbreak\mkern10mu({\operator@font mod}\,\,#1)}
\makeatother
\usepackage[compact]{titlesec}

%Colors

\definecolor{gold}{rgb}{0.85,0.65,0}
%\colorlet{TepperRed}{red!70!black} 

%Shorthand

%Equations, numbering, etc

\newcommand{\be}{\begin{eqnarray}}
\newcommand{\ee}[1]{\label{#1}\end{eqnarray}}
\newcommand{\ese}{\end{eqnarray*}}
\newcommand{\bse}{\begin{eqnarray*}}
\def\beq{\begin{equation}}
\def\eeq{\end{equation}}

\def\fnote#1{\footnote}
\newcommand{\epr}{\hfill\hbox{\hskip 4pt \vrule width 5pt height 6pt depth 1.5pt}\vspace{0.0cm}\par}

\newcommand{\grad}{\ensuremath{\nabla}}

%P, NP, co-NP, etc.
%\def\P{{\textsf\bf P\,}}
%\def\NP{{\textsf\bf NP\,}}

%Letters in different typefaces
%\newcommand{\A}{\mathbb{A}}

\def\R{{\mathbb{R}}}

\def\cC{{\cal C}}

\def\cF{{\cal F}}

\def\cK{{\cal K}}

\def\cP{{\cal P}}

\def\cR{{\cal R}}
\def\cS{{\cal S}}

\newcommand{\bbR}{\mathbb{R}}

%\def\i{{\mathop{z}}}

%Optimization basics
\DeclareMathOperator{\Opt}{Opt}

\DeclareMathOperator*{\argmin}{arg\,min}

%Saddle point and VI related

%\def\epsilonsad{{\epsilon_{\hbox{\scriptsize\rm sad}}}}

%Matrix related

\DeclareMathOperator{\Diag}{Diag}

\DeclareMathOperator{\Null}{Null}

\DeclareMathOperator{\Range}{Range}

%Probability and distributions

 %for variance

%Topology related
\def\dim{\mathop{{\rm dim}\,}}

\DeclareMathOperator{\intt}{int}

 %rint %ri
\DeclareMathOperator{\rint}{rel int}

\DeclareMathOperator{\bd}{bd}

%Function related

%\DeclareMathOperator{\ker}{Ker}

%Convexity related

\DeclareMathOperator{\conv}{conv}

\DeclareMathOperator{\cone}{cone}
\DeclareMathOperator{\clconv}{\overline{conv}}

\DeclareMathOperator{\ccnh}{\overline{cone}}
\def\Ext{{\mathop{\rm Ext}}}

\DeclareMathOperator{\Rec}{Rec}

\DeclareMathOperator{\apex}{apex}

\def\log{\mathop{{\rm log}}}

%Shapes etc.

%Spacing, picture sizing related

\theoremstyle{plain}
\newtheorem{theorem}{Theorem}

\newtheorem{lemma}[theorem]{Lemma}
\newtheorem*{lemma*}{Lemma}

\newtheorem*{proposition*}{Proposition}
\newtheorem{corollary}[theorem]{Corollary}
\newtheorem{observation}[theorem]{Observation}
\newtheorem*{observation*}{Observation}
\newtheorem{example}[theorem]{Example}%Examples should be numbered based on sections
\newtheorem*{example*}{Example}

\theoremstyle{definition}
%[section]

\newtheorem{condition}{Condition}[section]

\theoremstyle{remark}
\newtheorem{remark}{Remark}[section]
\newtheorem*{remark*}{Remark}

\theoremstyle{claim}

\renewcommand{\epr}{\hfill$\diamondsuit$\vspace{0.25cm}\par}

%\usepackage{refcheck} 

%\providecommand{\revise}[1]{{\protect\color{black}{#1}}}
%\providecommand{\extra}[1]{}

%\providecommand{\fkk}[2]{#1}{} %keep new text and hide old text
 %new text in blue, old text in red
%\providecommand{\hnh}[2]{#1}{} %keep new text and hide old text
 %new text in blue, old text in red

\title{A Second-Order Cone Based Approach for Solving the Trust-Region Subproblem and Its Variants}

\author[1]{Nam Ho-Nguyen}
\author[1]{Fatma K{\i}l{\i}n\c{c}-Karzan}
\affil[1]{Tepper School of Business, Carnegie Mellon University, Pittsburgh, PA, 15213, USA.}
\date{Submitted on 10 March 2016; Revised on 09 October 2016}

\begin{document}

\maketitle
\begin{abstract}
We study the trust-region subproblem (TRS) of minimizing a nonconvex quadratic function over the unit ball with additional conic constraints. Despite having a nonconvex objective, it is known that the classical TRS and a number of its variants are polynomial-time solvable. In this paper, we follow a second-order cone (SOC) based approach to derive an exact convex reformulation of the TRS under a structural condition on the conic constraint. Our structural condition is immediately satisfied when there is no additional conic constraints, and it generalizes several such conditions studied in the literature. As a result, our study highlights an explicit connection between the classical nonconvex TRS and smooth convex quadratic minimization, which allows for the application of cheap iterative methods such as Nesterov's accelerated gradient descent, to the TRS. Furthermore, under slightly stronger conditions, we give a low-complexity characterization of the convex hull of the epigraph of the nonconvex quadratic function intersected with the constraints defining the domain without any additional variables. We also explore the inclusion of additional hollow constraints to the domain of the TRS, and convexification of the associated epigraph.
\end{abstract}

\section{Introduction}\label{sec:Intro}

In this paper, we study the classical \emph{trust-region subproblem} (TRS) \cite{Conn.et.al.2000} and its polynomial-time solvable variants given by
\begin{equation} \label{eqn:trsSOC-conic}
\Opt _h :=  \min_{y \in \R^{n}}
\left\{ h(y) := y^\top  Q y + 2 \, g^\top y :~
\begin{array}{rcl} \|y\| &\leq& 1 \\ Ay - b &\in& \cK  
\end{array} \right\},
\end{equation} 
where $\|y\|$ denotes the Euclidean norm of $y$, $A \in \R^{m \times n}$, $b \in \R^m$, and $\cK\subseteq\R^m$ is a closed convex cone.  
Throughout the paper, we assume that the minimum eigenvalue of $Q$ is negative, that is, $\lambda_Q := \lambda_{\min}(Q) < 0$ and the domain of the problem is nonempty. 
Problem~\eqref{eqn:trsSOC-conic} is equivalent to the \emph{classical TRS} when there are no additional conic constraints, i.e., $A = I_n$, $b = 0$, and $\cK = \R^n$. 
That is, the classical TRS is given by 
\begin{equation}\label{eqn:trsSOC-classical}
 \min_{y\in\R^n} \left\{ h(y) := y^\top Q y + 2 g^\top y :~ \|y\| \leq 1 \right\}.
\end{equation}

The classical TRS is an essential ingredient of trust-region methods that are commonly used to solve continuous nonconvex optimization problems (see \cite{Conn.et.al.2000,NocedalWright2000numerical,PongWolkowicz2014} and references therein). In each iteration of a trust-region method, a quadratic approximation of the objective function is built and then optimized over a ball, called trust region, (or intersection of a ball with linear or conic constraints originating from the original problem) to find the new search point. 
TRS and its variants are also encountered in the context of 
robust optimization under matrix norm or polyhedral uncertainty (see \cite{BenTal_ElGhaoui_Nemirovski_09,BertsimasBrownCaramanis2011} and references therein), 
nonlinear optimization problems with discrete variables \cite{BuchheimDSPP2013,BurerAnstreicher2013}, 
least-squares problems \cite{zhang2010derivative}, 
constrained eigenvalue problems \cite{gander1989constrained}, 
and more.

As stated above, the optimization problem in \eqref{eqn:trsSOC-conic} is nonlinear and nonconvex when $\lambda_Q < 0$. Nevertheless, it is well-known that the semidefinite programming (SDP) relaxation for the classical TRS is exact, and classical TRS and a number of its variants can be solved in polynomial time via SDP-based techniques  \cite{Rendl_Wolkowicz_97,FortinWolkowicz2004} or using specialized nonlinear algorithms, e.g., \cite{Gould_LRT_99,More_Sorensen_83}. 

Several variants of the classical TRS that enforce additional constraints on the trust region have been proposed. Among these the most commonly studied is the case when $\cK$ is taken to be a nonnegative orthant, i.e., the unit ball is intersected with additional linear constraints modeled via the polyhedral set $\{y\in\R^n:\;Ay-b\in\cK\}$. TRS with additional linear inequalities arises in nonlinear programming and robust optimization (see \cite{Burer2015,JeyakumarLi2013} and references therein) and is studied in \cite{BienstockMichalka2014,Burer2015,BurerAnstreicher2013,BurerYang2014,JeyakumarLi2013,SturmZhang2003,YeZhang2003} under a variety of assumptions.  Specifically, \cite{BurerAnstreicher2013,SturmZhang2003} give a tight semidefinite formulation when there is a single linear constraint $a^\top y \leq b$ based on an additional constraint derived from second-order cone (SOC) based reformulation linearization technique (SOC-RLT). 
This approach was extended to two linear constraints in \cite{BurerAnstreicher2013,YeZhang2003} and the tightness of the SDP relaxation is shown when the linear constraints are parallel.  
More recently, Burer and Yang \cite{BurerYang2014} give a tight SDP relaxation with additional SOC-RLT constraints for an arbitrary number of linear constraints, under the condition that these additional linear inequalities do not intersect on the interior of the unit ball. 
We refer the readers to Burer \cite{Burer2015} for a recent survey and related references for the results on tight SDP relaxations associated with these variants. 
Following a different approach, Bienstock and Michalka \cite{BienstockMichalka2014} show that TRS with linear inequality constraints is polynomial-time solvable under the milder condition that the number of faces of the linear constraints intersecting with the unit ball is polynomially bounded.

TRS with additional conic constraints originate when the trust-region algorithm is applied to conic constrained optimization problems with nonconvex objective. Most notable example in this context is the well-known Celis-Dennis-Tapia (CDT) problem \cite{CelisDennisTapia1985} where a nonconvex quadratic is minimized over the intersection of two-ellipsoids. See also Ben-Tal and den Hertog \cite{BenTalDenHertog2014} for several applications of the TRS with additional conic quadratic constraints arising in the context of robust quadratic programming. 
Recently, Jeyakumar and Li \cite{JeyakumarLi2013} prove convexity of the joint numerical range, exactness of the SDP relaxation, and strong Lagrangian duality for the TRS with additional linear and SOC constraints. A key tool in their analysis is to recast the TRS as a convex quadratic minimization problem under a dimensionality condition. 

Hollow constraints defined by a single ellipsoid \cite{BenTalTeboulle1996,BM14,PongWolkowicz2014,SternWolkowicz1995,YeZhang2003}, several ellipsoids \cite{BienstockMichalka2014,YangBurer2016} or arbitrary  quadratics constraints \cite{Bienstock2016} have also attracted some attention in the literature. These approaches are once again either lifted SOC-based or SDP-based convexification schemes or customized algorithms. We discuss these further in Section~\ref{sec:TRS-additional-constraints}.

While the SDP reformulations of the classical TRS and its variants can be solved using interior-point methods in polynomial time \cite{alizadeh1995interior,nesterov1994interior}, this approach is not practical because the worst-case complexity of these methods for solving SDPs is a relatively large polynomial and there exist faster methods. 
That said, the classical TRS is closely connected to eigenvalue problems. In the specific case of classical TRS where the objective is convex, i.e., when $Q$ is positive semidefinite, this problem becomes simply the minimization of a smooth convex function over the Euclidean ball, and thus it can be solved efficiently via iterative first-order methods (FOMs) such as Nesterov's accelerated gradient descent algorithm \cite{Nesterov_83}. 
Moreover, in the nonconvex case with $\lambda_Q<0$, when the problem is purely quadratic, i.e., when $g=0$ as well, the classical TRS reduces to finding the minimum eigenvalue of $Q$. This can be approximated
efficiently via the Lanczos method \cite[Chapter 10.1]{GolubVanLoan1996book} in practice. When $g \neq 0$, even though the classical TRS is no longer equivalent to an eigenvalue problem and these methods cannot be applied directly, this observation has led to the development of efficient, matrix-free algorithms that are based solely on matrix-vector products. 
The dual-based algorithms of \cite{More_Sorensen_83}, \cite{Rendl_Wolkowicz_97} and \cite{sorensen1997minimization}, the generalized Lanczos trust-region method of \cite{Gould_LRT_99}, 
 and the recent developments of \cite{AdachiIwata2015,erway2009subspace,erway2009iterative,gould2010solving,HazanKoren2016,rojas2001new} are examples of such iterative algorithms. 
More recently, for TRS with a single additional linear constraint, the papers \cite{SalahiFallahi2016,SalahiTaati2015,SalahiTaatiWolkowicz2016} explore strong Lagrangian duality, and derive numerically efficient algorithms from this.  
In most cases, these algorithms for classical TRS and its variants are presented together with their convergence proofs. 
Nevertheless, to the best of our knowledge, the theoretical runtime evaluation of these algorithms lacks formal guarantees with the exception of recent work \cite{HazanKoren2016} (done in a probabilistic fashion). 
In addition, in most of these iterative methods, numerical difficulties are reported in the so-called `hard case' \cite{More_Sorensen_83}, when the linear component vector $g$ is nearly orthogonal to the eigenspace of the smallest eigenvalue of $Q$.  
In many cases, the lack of provable worst-case convergence bounds for the classical TRS is attributed to the hard case. 
As a result, most research on specific algorithms for the classical TRS thus far focuses on addressing this issue. 

Recently, Hazan and Koren \cite{HazanKoren2016} suggested a linear-time algorithm for approximately solving the classical TRS within a given tolerance $\epsilon$ on the objective value. 
Their approach relies on an efficient, linear-time solver for a specific SDP relaxation of a feasibility version of the classical TRS and reduces the classical TRS into a series of eigenvalue computations. Specifically, they exploit the special structure of the dual problem, a one-dimensional problem for which bisection techniques can be applied, to avoid using interior-point solvers. 
Each dual step of their algorithm requires a single approximate maximal eigenvalue computation which takes $O\left( N \frac{\sqrt{\Gamma}}{\sqrt{\epsilon}} \log\left(\frac{n}{\delta}   \log\left( \Gamma/\epsilon \right) \right) \right)$ time to achieve an $\epsilon$-accurate estimate with probability at least $1-\delta/\log\left( \Gamma/\epsilon \right)$, where $N$ is the number of nonzero entries in $Q$, $\Gamma := \max\left\{2( \|Q\| + \|g\|), 1\right\}$, and $\|Q\|$ stands for the spectral norm of the matrix $Q$, i.e., the maximum absolute eigenvalue. Their overall algorithm converges in $O\left( \log\left(\frac{\Gamma}{\epsilon}\right) \right)$ iterations. Then a primal solution is recovered by solving a small linear program formed by the dual iterates. Finally, they provide an efficient and accurate rounding procedure for converting the SDP solution into a feasible solution to the classical TRS. Consequently, their approach does not require the use of interior-point SDP solvers and bypasses the difficulties noted for the hard case of the classical TRS. The overall complexity (elementary arithmetic operations) of their approach is
\[
O\left( N \frac{ \sqrt{\Gamma} \log\left( \Gamma / \epsilon \right)}{\sqrt{\epsilon}} \log\left(\frac{n}{\delta} \log\left(\frac{\Gamma}{\epsilon}\right)\right)  \right).
\]
Thus, their approach runs in time linear in the number of nonzero entries of the input and it can exploit data sparsity.

These algorithmic developments for TRS have been complemented with research on convex hull characterization of sets associated with TRS. In this respect, \cite{Burer2015} presents a nice summary of such results given for the lifted SDP representations. 
The epigraph of TRS is closely related to convex hulls of sets defined as the intersection of convex and nonconvex quadratics. Such sets cover two-term disjunctions applied to an SOC or its cross-sections arising in the context of Mixed Integer Conic Programming or reverse convex constraints based on ellipsoids, and thus have been studied under a variety of assumptions (see Burer and K{\i}l{\i}n\c{c}-Karzan \cite{BKK14} and references therein).  
In particular, 
nonconvex sets obtained from the intersection of a second-order-cone representable (SOCr) cone and a nonconvex cone defined by a single homogeneous quadratic, and possibly  an affine hyperplane were studied in \cite{BKK14}. For such sets, under several easy-to-verify conditions, \cite{BKK14} suggests a simple, computable convex relaxation where the nonconvex cone is replaced by an additional SOCr cone, and identifies several stronger conditions guaranteeing the tightness of these relaxations, in terms of giving the associated closed conic hulls and closed convex hulls of these sets.  
These conditions have been further verified in many specific cases, and it was shown in \cite{BKK14} that the classical TRS can be solved via the optimization of two SOC-based programs. 
Similar convex hull descriptions of a single SOC or its cross-section  intersected with a general nonconvex quadratic are also studied recently in \cite{MV14} under different assumptions.

In this paper, as opposed to the previous specialized algorithms or approaches that work in a lifted space, e.g., SDP-based relaxations, we follow an SOC-based approach in the original space of variables to solve the classical TRS and its variants with conic constraints  \eqref{eqn:trsSOC-conic} or hollows. That is, under easy-to-verify conditions, we derive tight SOC-based convex reformulations and convex hull characterizations of sets associated with the TRS with additional conic constraints \eqref{eqn:trsSOC-conic}. 
Our contributions can be summarized as follows.
%\begin{enumerate}[a),labelindent=0cm,itemindent=0em, leftmargin=*,align=left,labelwidth=\widthof{\ref{OurContribution}}]
\begin{enumerate}[(i)]
\item In Section~\ref{sec:SOCReformulation}, we study an SOC-based convex relaxation of 
\eqref{eqn:trsSOC-conic} 
in the original space of variables obtained by simply replacing the nonconvex objective function $h(y)$ in \eqref{eqn:trsSOC-conic} with the convex objective $f(y):= y^\top \left( Q - \lambda_Q I_n \right) y + 2 \, g^\top y + \lambda_Q$. We prove tightness of this relaxation under an easily checkable structural condition on the additional conic constraints $Ay-b\in\cK$ (see Theorem~\ref{thm:trsSOC-tight-convex}). For classical TRS our convex relaxation is immediately tight without any condition. In the case of nontrivial conic constraints $Ay-b\in\cK$ in \eqref{eqn:trsSOC-conic}, the conditions ensuring tightness of our convex relaxation can be somewhat stringent. We discuss these issues and relation of our condition to the existing ones from the literature in Section~\ref{sec:conditions}. 

\item
Due to the fact that our convex relaxation/reformulation works in the original space of variables and thus preserves the domain, it is immediately amenable to work with existing iterative FOMs; we discuss the associated complexity results in Section~\ref{sec:SOCComplexity}. In particular, our convex relaxation/reformulation can be built via a single minimum eigenvalue computation. In the case of classical TRS, it can then solved by minimizing a smooth convex quadratic over the unit ball via Nesterov's accelerated gradient descent algorithm \cite{Nesterov_83}.
Thus, with probability $1-\delta$, our approach solves the classical TRS to accuracy $\epsilon$ in running time
\[ O\left( N \left( \frac{ \sqrt{\|Q\|} }{\sqrt{\epsilon}} \log\left(\frac{n}{\delta}\right) + \frac{\sqrt{\|Q\|}}{\sqrt{\epsilon}} \right) \right). \]

\item Finally, in Section~\ref{sec:TRSConvexification}, we study exact and explicit SOC-based convex hull results for the epigraph of the TRS given by 
\[ 
X := \left\{ \begin{bmatrix} y\\ t \end{bmatrix}\in\R^{n+1} : \begin{array}{rcl} \|y\| &\leq& 1\\ Ay - b &\in& \cK\\ h(y) &\leq& t \end{array} \right\}.
\]
In Theorem~\ref{thm:trs-convexify}, under a slightly stronger condition, we provide an explicit characterization of convex hull of $X$ in the space of original variables. 

We also examine the inclusion of additional hollow constraints $y \in \cR = \R^n \setminus \cP$ to the TRS in Section~\ref{sec:TRS-additional-constraints}.  
In particular, these developments immediately lead to convex reformulations for several variants of TRS, including interval-bounded 
TRS (see \cite{BenTalTeboulle1996,BM14,PongWolkowicz2014,SternWolkowicz1995,YeZhang2003}), and thus have algorithmic implications. 
\end{enumerate}

From a convex reformulation perspective, the papers \cite{FortinWolkowicz2004}, \cite{JeyakumarLi2013}, \cite{BKK14}, \cite{BenTalDenHertog2014}, and \cite{Locatelli2016} are closely related to our approach.  
To handle the hard case in classical TRS, Fortin and Wolkowicz \cite{FortinWolkowicz2004} discusses a shift of the matrix $Q$, which results in the same SOC-based convex reformulation as ours.  
Nevertheless, \cite{FortinWolkowicz2004} solves the resulting problem using a modification of the SDP-based Rendl-Wolkowicz algorithm \cite{Rendl_Wolkowicz_97}. Their approach requires a case-by-case analysis to handle the hard case and lacks formal convergence guarantees. In contrast to such an approach, we propose using Nesterov's algorithm \cite{Nesterov_83}, which is not only oblivious to the hard case and thus does not requires a case-by-case analysis, but also provides formal convergence guarantees.  
Jeyakumar and Li \cite{JeyakumarLi2013} study TRS with additional linear and conic-quadratic constraints. They obtain a convex reformulation via a similar shift in the $Q$ matrix under a certain dimensionality condition on the additional constraints. We show that the conditions from \cite{JeyakumarLi2013} imply our structural condition and we provide an example where our condition is satisfied but the ones in \cite{JeyakumarLi2013} are not. 
Burer and K{\i}l{\i}n\c{c}-Karzan \cite{BKK14} also give a scheme to solve the classical TRS via SOC programming. The scheme suggested in \cite{BKK14} is in a lifted space with one additional variable and requires solving two related SOC optimization problems. In contrast, our convex reformulation is in the space of original variables and requires solving only a single minimization problem. 
Ben-Tal and den Hertog \cite{BenTalDenHertog2014} study a different SOC-based convex reformulation in a lifted space of the TRS and its variants under a simultaneously diagonalizable assumption. However, this relaxation requires a full eigenvalue decomposition of the matrix $Q$ as opposed to our relaxation which only needs a maximum eigenvalue computation. Based on the same convex reformulation as in \cite{BenTalDenHertog2014}, Locatelli~\cite{Locatelli2016} studies the TRS with additional linear constraints under a structural condition on the constraints derived from a KKT system. We show that in the case of additional linear constraints, our geometric condition is equivalent to the structural condition used in \cite{Locatelli2016} (see Lemma \ref{lem:Locatelli}).  
To the best of our knowledge, the KKT based derivations of conditions in \cite{Locatelli2016} are not extended to the conic case, yet our condition handles additional conic constraints generalizing the one from \cite{Locatelli2016} and highlights the features of underlying geometry.

On the algorithmic side, our transformation of the TRS \eqref{eqn:trsSOC-conic} is mainly based on the minimum eigenvalue of $Q$,
which can be computed to accuracy $\epsilon>0$ with probability $1-\delta$ in $O\left( N \sqrt{\|Q\|} \log(n/\delta) / \sqrt{\epsilon} \right)$ arithmetic operations using the Lanczos method (see \cite[Section 4]{KuczynskiWozniakowski1992estimating} and \cite[Section 5]{HazanKoren2016}), where $N$ is the number of nonzero entries in $Q$. 
Due to the fact that $f(y)$ is a convex quadratic function, our convex relaxation/reformulation for  \eqref{eqn:trsSOC-conic} can simply be cast as a conic optimization problem. Specifically, when there are no additional constraints, this exact convex reformulation becomes minimizing a smooth convex function over the Euclidean ball, and thus it is readily amenable to efficient FOMs. For this class of convex problems, given a desired accuracy of $\epsilon$, a classical FOM, Nesterov's accelerated gradient descent algorithm \cite{Nesterov_83}, involves only elementary operations such as addition, multiplication, and matrix-vector product computations and achieves the optimal iteration complexity of $O\left(\sqrt{\|Q\|}/\sqrt{\epsilon} \right)$. Note when the problem is convex (when $Q$ is positive semidefinite), the same complexity guarantees can be obtained by applying Nesterov's accelerated gradient descent \cite{Nesterov_83} to the problem. Thus, our approach can be seen as an analog of the latter algorithm to the general nonconvex case. 
This is the first-time that such an observation is made that the classical TRS problem can be solved by a single minimum eigenvalue computation and  Nesterov's accelerated gradient descent~\cite{Nesterov_83}. 
Moreover, our analysis highlights the connection between the TRS and eigenvalue problems, and in fact demonstrates that, up to constant factors, the complexity of solving the classical TRS is no worse than solving a minimum eigenvalue problem.  
This was empirically observed in \cite[Section 5]{Rendl_Wolkowicz_97} and our analysis provides a theoretical justification for it. 

Convexification-based approaches such as ours and \cite{BenTalDenHertog2014,BenTalTeboulle1996,HazanKoren2016,JeyakumarLi2013,Locatelli2016} work directly with convex formulations and provide a uniform treatment of the problem and thus bypass the so-called `hard case'.  
Moreover, the resulting convex formulations are then amenable to iterative FOMs from convex optimization literature which only require matrix-vector product type operations. 
To the best of our knowledge, iterative algorithms for SDP-based relaxations of the TRS have not been studied in the literature with the exception of Hazan and Koren \cite{HazanKoren2016}. 
As compared to the approach in \cite{HazanKoren2016}, we believe our approach is straightforward, easy to implement, and achieves a slightly better convergence guarantee in the worst case. In particular, our approach directly solves the TRS, as opposed to only solving a feasibility version of the TRS; thus we save an extra logarithmic factor. 
While \cite{HazanKoren2016} relies on repeatedly calling a minimum eigenvalue, our approach, as well as that of Jeyakumar and Li \cite{JeyakumarLi2013}, work with an SOC-based reformulation of the problem in the original space and requires only a single minimum eigenvalue computation. 
The convex reformulations given by Ben-Tal and Teboulle  \cite{BenTalTeboulle1996} or the one studied in Ben-Tal and den Hertog \cite{BenTalDenHertog2014} and Locatelli \cite{Locatelli2016} requires a full eigenvalue decomposition which is more expensive, 
i.e., $O(n^3)$ time. Moreover, these reformulations from \cite{BenTalDenHertog2014,BenTalTeboulle1996,Locatelli2016}  involve  additional variables and constraints, and thus FOMs applied to these  entail more complicated and expensive projection operations.  

Efficient algorithms to solve convex reformulation of TRS \eqref{eqn:trsSOC-conic} in the original space of variables is particularly advantageous in the context of solving robust convex quadratic programs (QPs). Robust convex QPs with ellipsoidal uncertainty are known to have close connections with the TRS (see \cite{BenTalDenHertog2014}). The function $f(x,u)$ underlying a robust convex quadratic constraint $\sup_{u\in U}f(x,u) \leq 0$ is convex in both the decision variable $x$ and the uncertainty $u$, highlighting the nonconvexity of the problem. Yet, a convex reformulation of such a robust constraint in the original space of variables allows us to recast it as $\sup_{u\in U}\tilde{f}(x,u) \leq 0$, where $\tilde{f}(x,u)$ is convex-concave in $x$ and $u$, demonstrating its hidden convexity. 
Recently, in \cite{Ho-NguyenKK2016RO} an efficient online iterative framework is introduced to solve robust convex optimization problems which bypasses the burden of taking robust counterparts.
When specialized to robust convex QPs, each iteration of this online framework requires handling each robust constraint independently and making a simple iteration towards solving the associated TRSs as opposed to completely solving the TRSs. Then efficient online FOMs capable of solving the TRS in the original space of variables becomes a key component of such an approach to solve robust convex QPs.

Our convex hull results on the epigraph of the TRS are inspired by the recent work of Burer and K{\i}l{\i}n\c{c}-Karzan \cite{BKK14} on convex hulls of general quadratic cones. 
While the SOC-based convex hull results in \cite{BKK14} are applicable to many problems, including the epigraph set associated with the classical TRS, we present a much more direct analysis specialized for TRS. There are two main benefits of our approach. First, the approach outlined in \cite{BKK14} for solving classical TRS requires the assumption that the optimal value is nonpositive. While this is not an issue for the classical TRS since its optimal value is always negative under the assumption of $\lambda_Q<0$, with the existence of additional constraints, this may no longer be true for \eqref{eqn:trsSOC-conic}. In contrast, our direct analysis does not rely on any nonpositivity assumptions of the objective value, and hence we are able to extend our results to include additional conic constraints. Second, our direct analysis of the TRS allows us to bypass verifying several conditions from \cite{BKK14} and to work directly with a single structural condition on additional conic constraints which is always satisfied in the case of the classical TRS. 

Several papers \cite{Beck2009,BeckEldar2006,JeyakumarLi2013} exploit convexity results on the joint numerical range of quadratic mappings to explore strong duality properties of the TRS and its variants. These convexity results are based on Yakubovich's $\cS$-lemma \cite{FradkovYakubovich1979} and Dines \cite{Dines1941}, see also the survey by P{\'o}lik and Terlaky \cite{PolikTerlaky2007} for a more detailed discussion. While these results as well as ours both analyze sets associated with the TRS, the actual sets in question are quite different. In the context of the TRS, the joint numerical range is a set of the form
\[ \left\{ [h(y);\; \|y\|^2;\; Ay-b] 
:~ y \in \R^n \right\} \subseteq \R^{m+2}.\]
Under certain conditions, this set is shown to be convex. In contrast, we study the epigraphical set $X$, 
which is nonconvex if $h(y)$ is, and we give its convex hull description in the original space of variables.

%Our paper is structured as follows. Section~\ref{sec:SOCReformulation} details the derivation of our convex reformulation of the TRS with additional conic constraints under a structural assumption on the conic constraints. 
%We discuss these issues and relation of our condition to the existing ones from the literature in Section~\ref{sec:conditions}.
%We then discuss the complexity of solving our reformulation of the classical TRS with iterative methods in Section~\ref{sec:SOCComplexity}. 
%We study exact and explicit low-complexity convex hull results for its epigraph obtained by new SOC constraints in Section~\ref{sec:TRSConvexification}. We also extend our SOC-based approach to  to handle TRS with additional hollow constraints in Section~\ref{sec:TRS-additional-constraints}. 
%{\cmg We present two appendices, one on the implications of working with an approximate minimum eigenvalue in our reformulation, and the other that supplements one of our derivations in Section~\ref{sec:TRSConvexification}.}

\emph{Notation}. 
We use Matlab notation to denote vectors and matrices. Given a matrix, $A\in\R^{m\times n}$, we let $\Null(A)$ and $\Range(A)$ denote its nullspace and range. Furthermore, we denote the minimum eigenvalue of a symmetric matrix $Q$ as $\lambda_Q:=\lambda_{\min}(Q)$ and we let $I_n$ be the $n\times n$ identify matrix. For a given symmetric matrix $Q$, the notation $Q \succeq 0$ ($Q \succ 0$) corresponds to the requirement that $Q$ is positive semidefinite (positive definite). 
Given a vector $\xi\in\R^n$, $\Diag(\xi)$ corresponds to an $n\times n$ diagonal matrix with its diagonal equal to $\xi$. 
For a set $S \subseteq \bbR^n$, we define $\intt(S), \rint(S), \bd(S), \Ext(S), \Rec(S)$, $\conv(S)$, $\clconv(S)$, $\cone(S)$ and $\ccnh(S)$ to be the interior, relative interior, boundary, set of extreme points, recession cone, convex hull, closed convex hull, conic hull, and closed conic hull of $S$ respectively. For a cone $\cK \subseteq \bbR^n$, we denote its dual cone by $\cK^*$.

\section{Tight Low-Complexity Convex Reformulation of the TRS}\label{sec:SOCReformulation}

In this section, we first present an exact SOC-based convex reformulation for the classical TRS and extend this reformulation to the TRS with additional conic constraints \eqref{eqn:trsSOC-conic} under an appropriate condition. 
We then compare and relate our condition to handle conic constraints to other conditions studied in the literature. 
Finally, we explore algorithmic aspects of solving 
our SOC-based reformulation.

\subsection{Convex Reformulation} 
\label{sec:TRSReformulation}

We start with the following simple observation, which we present without proof.
\begin{observation}\label{obs:boundary-nonconvex}
Let $\cC \subset \bbR^n$ be some bounded domain and $h:\cC \to \bbR$ be a (possibly nonconvex) function such that $h$ has no local minimum on $\intt(\cC)$. Then any optimal solution $y^*$ of the program
\[ \min_{y} \left\{ h(y) :~ y \in \cC \right\} \]
must be on $\bd(\cC)$.
\end{observation}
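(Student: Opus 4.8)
The plan is to argue by contradiction, using nothing more than the fact that a global minimizer over $\cC$ is, a fortiori, a minimizer over any subset of $\cC$ that still contains it.

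First I would suppose, toward a contradiction, that some optimal solution $y^*$ of $\min_{y}\{h(y):y\in\cC\}$ lies in $\intt(\cC)$. By definition of optimality, $h(y^*)\le h(y)$ for every $y\in\cC$, and in particular for every $y\in\intt(\cC)$ since $\intt(\cC)\subseteq\cC$. Hence $y^*$ minimizes the restriction of $h$ to $\intt(\cC)$ globally, and therefore also locally: there is an open ball around $y^*$ contained in $\intt(\cC)$ — such a ball exists precisely because $y^*\in\intt(\cC)$ — on which $h(y^*)\le h(y)$. Thus $y^*$ is a local minimum of $h$ on $\intt(\cC)$, contradicting the hypothesis that $h$ has no local minimum there. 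Consequently $y^*\notin\intt(\cC)$, so $y^*\in\cC\setminus\intt(\cC)$, and since $\cC\setminus\intt(\cC)\subseteq\cl(\cC)\setminus\intt(\cC)=\bd(\cC)$, we conclude $y^*\in\bd(\cC)$, as claimed.

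I expect no genuine obstacle here: the statement is essentially a tautology once the definitions of ``optimal solution'' and ``local minimum'' are unwound. The only two points that warrant an explicit line are (a) the observation that a global minimizer over $\cC$, when restricted to the open set $\intt(\cC)$, is automatically a local minimizer there, which relies on an interior point admitting a full neighborhood inside $\cC$; and (b) the elementary point-set inclusion $\cC\setminus\intt(\cC)\subseteq\bd(\cC)$. Notably, neither boundedness of $\cC$ nor any continuity of $h$ enters the argument — boundedness is presumably invoked only elsewhere, to ensure that an optimal solution exists in the first place — so I would phrase the proof so that it applies verbatim whenever the minimum is attained.
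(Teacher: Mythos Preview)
Your argument is correct and complete. The paper itself does not give a proof of this observation at all---it explicitly says ``which we present without proof''---so there is nothing to compare against; your contradiction argument is exactly the routine verification the authors elide, and your remark that boundedness plays no role in the proof (only in guaranteeing existence of a minimizer) is a fair observation.
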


We next observe that when our domain $\cC$ is defined by (possibly nonconvex) constraints $c_j(y) \leq 0$, we can obtain relaxations of the nonconvex program in Observation~\ref{obs:boundary-nonconvex} by simply aggregating these constraints with appropriate weights.
\begin{lemma}\label{lem:nonconvex-underestimator-tightness}
Let $C \subseteq \bbR^n$ be a given set, $c_j(y):C \to \bbR$ for $j=1,\ldots,m$ be given functions. Suppose $h(y)$ is a given function and $f_j(y)$ are functions on the domain $\cC:=\{y:\; c_j(y) \leq 0,\;\forall j=1,\ldots,m\} \cap C$ such that $f_j(y)=h(y)-\alpha_j c_j(y)$ for some $\alpha_j \leq 0$. Let $F(y):=\max_{j=1,\ldots,m}f_j(y)$. Then 
\[
\Opt_h:=\min_y\left\{h(y):~y\in \cC \right\} \geq \min_y\left\{F(y):~y\in \cC \right\}=:\Opt_f.
\]
Moreover, $\Opt_h=\Opt_f$ if and only if there exists an optimal solution $y^*$ to the problem on the right-hand side satisfying $\alpha_j c_j(y^*)=0$ for some $j\in\{1,\ldots,m\}$.
\end{lemma}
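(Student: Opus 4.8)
The plan is to prove the inequality $\Opt_h \geq \Opt_f$ pointwise on $\cC$ and then characterize equality by tracking when the pointwise bound is tight at an optimizer of the right-hand problem. First I would observe that for any $y \in \cC$ and any $j \in \{1,\ldots,m\}$ we have $c_j(y) \leq 0$ by definition of $\cC$, and since $\alpha_j \leq 0$ this gives $\alpha_j c_j(y) \geq 0$, hence $f_j(y) = h(y) - \alpha_j c_j(y) \leq h(y)$. Taking the maximum over $j$ preserves the inequality, so $F(y) = \max_j f_j(y) \leq h(y)$ for every $y \in \cC$. Minimizing both sides over $y \in \cC$ yields $\Opt_f \leq \Opt_h$, which is the claimed inequality. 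Note this step uses only the sign conditions $\alpha_j \leq 0$ and $c_j(y) \leq 0$ on $\cC$; no convexity or continuity is needed, which matches the deliberately spare hypotheses of the lemma.

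For the equality characterization, I would argue both directions. For the ``if'' direction, suppose $y^*$ is optimal for $\min_y\{F(y) : y \in \cC\}$ with $\alpha_{j_0} c_{j_0}(y^*) = 0$ for some $j_0$. Then $f_{j_0}(y^*) = h(y^*) - \alpha_{j_0} c_{j_0}(y^*) = h(y^*)$, so $F(y^*) \geq f_{j_0}(y^*) = h(y^*)$. Combined with the pointwise bound $F(y^*) \leq h(y^*)$ from the first part, we get $F(y^*) = h(y^*)$, hence $\Opt_f = F(y^*) = h(y^*) \geq \Opt_h$; together with $\Opt_f \leq \Opt_h$ this forces $\Opt_f = \Opt_h$. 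For the ``only if'' direction, suppose $\Opt_h = \Opt_f$. Let $y^*$ be any optimal solution to the right-hand problem (it exists since we are assuming the problems attain their optima, as the statement speaks of ``an optimal solution''). Then $h(y^*) \geq \Opt_h = \Opt_f = F(y^*)$, and combined with $F(y^*) \leq h(y^*)$ we get $F(y^*) = h(y^*)$. Pick $j_0 \in \argmax_j f_j(y^*)$, so $f_{j_0}(y^*) = F(y^*) = h(y^*)$, which rearranges to $\alpha_{j_0} c_{j_0}(y^*) = h(y^*) - f_{j_0}(y^*) = 0$. Thus the desired index exists.

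I do not anticipate a genuine obstacle here; the lemma is essentially a bookkeeping statement about Lagrangian-type aggregation with sign-controlled multipliers. The one point requiring a little care is the logical structure of the equality characterization: the condition ``$\alpha_j c_j(y^*) = 0$ for some $j$'' must be read as attached to a specific optimizer $y^*$ of the relaxed problem, and in the ``only if'' direction one must exhibit such an optimizer — here any optimizer works, and the active index $j_0$ is simply an index achieving the maximum defining $F(y^*)$. A secondary subtlety is that attainment of both minima is tacitly assumed; if one wanted to be fully careful one would either add that hypothesis explicitly or phrase the equality condition in terms of infima and minimizing sequences, but for the intended application (where $\cC$ is compact and the functions continuous) attainment is automatic and I would not belabor it.
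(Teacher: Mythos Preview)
Your proof of the inequality $\Opt_f \leq \Opt_h$ and of the ``if'' direction of the equality characterization are correct and essentially identical to the paper's.

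However, your ``only if'' direction contains a genuine error. You write that $h(y^*) \geq \Opt_h = \Opt_f = F(y^*)$ and then combine this with the pointwise bound $F(y^*) \leq h(y^*)$ to conclude $F(y^*) = h(y^*)$. But these two inequalities are the \emph{same} inequality, both asserting $h(y^*) \geq F(y^*)$; combining them yields nothing new. In fact, the stronger claim you are implicitly relying on---that \emph{every} optimizer $y^*$ of the relaxed problem satisfies $F(y^*) = h(y^*)$ when $\Opt_h = \Opt_f$---is false. For a concrete counterexample, take $n=1$, $m=1$, $C = \bbR$, $c_1(y) = y^2 - 1$, $\alpha_1 = -1$, and $h(y) = -y^2$. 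Then $\cC = [-1,1]$, $f_1(y) = -y^2 + (y^2 - 1) = -1$, so $F \equiv -1$ and $\Opt_f = -1 = \Opt_h$. Every point of $\cC$ is optimal for $F$, but at $y^* = 0$ we have $\alpha_1 c_1(0) = 1 \neq 0$ and $F(0) = -1 < 0 = h(0)$.

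The paper avoids this by proving the contrapositive: assuming that \emph{every} optimizer $y^*$ of the relaxed problem has $\alpha_j c_j(y^*) > 0$ for \emph{all} $j$, it shows $\Opt_f < \Opt_h$. The key is to bound $h(y)$ from below by $F(y^*)$ strictly for every $y \in \cC$, splitting into the cases where $y$ is optimal for $F$ (then $F(y^*) = F(y) < h(y)$ by the hypothesis) and where $y$ is not optimal for $F$ (then $F(y^*) < F(y) \leq h(y)$). Your argument can be repaired by adopting this contrapositive structure; the existential quantifier in the statement (``there \emph{exists} an optimal solution'') is essential and cannot be strengthened to ``every''.
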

\begin{proof}
First, we note that for any $y\in \cC$, we have $\alpha_j c_j(y) \geq 0$ since $\alpha_j \leq 0$, and thus for all $j\in\{1,\ldots,m\}$, $f_j(y) = h(y) - \alpha_j\, c_j(y) \leq h(y)$. This establishes $\Opt_h \geq \Opt_f$.

Let $y^*$ be an optimal solution to $\min_y\left\{F(y):~y\in \cC\right\}$ for which $\alpha_j c_j(y^*) = 0$ for some $j$. Then we have $F(y^*) = f_j(y^*)=h(y^*)$, which implies that $y^*$ is also optimal to $\Opt_h$. Now consider the case where every optimal solution $y^*\in\argmin_y\left\{F(y):~y\in \cC\right\}$ satisfies $\alpha_j c_j(y^*) > 0$ for all $j$. Note that for any $y \in \cC$ satisfying $\alpha_j c_j(y)>0$ for all $j$, we have $F(y)<h(y)$. Thus, for such optimal solutions $y^*$, we have $F(y^*) < h(y^*)$, and for any other non-optimal solution $y \in \cC$, we have $F(y^*) < F(y) \leq h(y)$, which implies $\Opt_f < \Opt_h$.
\end{proof}

Let us now turn our attention back to the TRS \eqref{eqn:trsSOC-conic}. Henceforth, we define $h(y) := y^\top Q y + 2 g^\top y$ to be our nonconvex quadratic objective function, where $Q$ is some symmetric matrix with $\lambda_Q < 0$. It is easy to see that on any bounded domain $\cC$, $h(y)$ has no local minimum on $\intt(\cC)$.
Hence, Observation~\ref{obs:boundary-nonconvex} points out the important role of the boundary of the domain $\left\{ y :~ \|y\| \leq 1, \ Ay - b \in \cK \right\}$ to the TRS \eqref{eqn:trsSOC-conic}.

A possible convex relaxation for \eqref{eqn:trsSOC-conic} suggested by Lemma~\ref{lem:nonconvex-underestimator-tightness} is that we embed the conic constraints $Ay - b \in \cK$ into the ground set $C$ and aggregate the constraint $\|y\| \leq 1$ with weight $\alpha=\lambda_Q$ to obtain the objective function
\begin{equation}\label{eqn:f-function}
f(y) := h(y) + \lambda_Q(1-\|y\|^2) = y^\top (Q - \lambda_Q I_n) y + 2g^\top y + \lambda_Q.
\end{equation}
Note $Q - \lambda_Q I_n \succeq 0$, and thus the function $f(y)$ is convex, and clearly is also an underestimator of $h(y)$, hence minimizing $f(y)$ over our domain is still a convex relaxation. Lemma~\ref{lem:nonconvex-underestimator-tightness} then gives us a precise characterization for when the convex relaxation using $f(y)$ is tight.
\begin{corollary}\label{cor:trsSOC-tightness}
Suppose $\lambda_Q<0$. Consider the convex relaxation for problem~\eqref{eqn:trsSOC-conic} given by
\begin{equation}\label{eqn:convex-trsSOC-conic}
\Opt_{f} = \min_y \left\{ f(y) : \begin{array}{rcl} \|y\| &\leq& 1\\ Ay - b &\in& \cK \end{array} \right\},
\end{equation}
where $f(y)$ is defined in \eqref{eqn:f-function}. This convex relaxation is tight if and only if there exists an optimal solution $y^*$ to \eqref{eqn:convex-trsSOC-conic} such that $\|y^*\| = 1$.
\end{corollary}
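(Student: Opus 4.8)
The plan is to obtain this corollary as an immediate specialization of Lemma~\ref{lem:nonconvex-underestimator-tightness}. First I would make the following identification: take the ground set to be $C := \{y \in \R^n :~ Ay - b \in \cK\}$, set $m = 1$ with the single constraint function $c_1(y) := \|y\|^2 - 1$, and choose the aggregation weight $\alpha_1 := \lambda_Q$. Since $\lambda_Q < 0$, the hypothesis $\alpha_1 \leq 0$ of the lemma is satisfied. With these choices, $\cC := \{y :~ c_1(y) \leq 0\} \cap C$ is exactly the feasible domain of the TRS \eqref{eqn:trsSOC-conic} (using that $\|y\|^2 \leq 1 \iff \|y\| \leq 1$), and $f_1(y) = h(y) - \alpha_1 c_1(y) = h(y) + \lambda_Q(1 - \|y\|^2)$, which is precisely the function $f(y)$ of \eqref{eqn:f-function}. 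Because there is only a single constraint, $F(y) = f_1(y) = f(y)$, so the program $\min_y\{F(y) :~ y \in \cC\}$ appearing in the lemma coincides with the convex relaxation \eqref{eqn:convex-trsSOC-conic}.

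Next I would simply invoke the conclusion of Lemma~\ref{lem:nonconvex-underestimator-tightness}: it yields $\Opt_h \geq \Opt_f$ automatically (re-confirming that \eqref{eqn:convex-trsSOC-conic} is a relaxation, consistent with $f$ being an underestimator of $h$ since $Q - \lambda_Q I_n \succeq 0$), and it states that $\Opt_h = \Opt_f$ if and only if there exists an optimal solution $y^*$ of \eqref{eqn:convex-trsSOC-conic} with $\alpha_1 c_1(y^*) = 0$, i.e. $\lambda_Q(\|y^*\|^2 - 1) = 0$. The final step is to observe that, because $\lambda_Q < 0$, the equation $\lambda_Q(\|y^*\|^2 - 1) = 0$ is equivalent to $\|y^*\|^2 = 1$, i.e. $\|y^*\| = 1$. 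This is exactly the claimed equivalence.

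There is essentially no hard step: the only points requiring care are (a) rewriting the ball constraint $\|y\| \leq 1$ in the quadratic form $c_1(y) = \|y\|^2 - 1 \leq 0$ so that the aggregation $h - \alpha_1 c_1$ matches \eqref{eqn:f-function} exactly, and (b) noting that, since $\lambda_Q \neq 0$, the complementarity-type condition $\alpha_1 c_1(y^*) = 0$ from the lemma collapses to the normalization $\|y^*\| = 1$. (If one prefers, the ``only if'' direction can also be argued directly: if every optimal $y^*$ of \eqref{eqn:convex-trsSOC-conic} satisfied $\|y^*\| < 1$, then $\lambda_Q(1 - \|y^*\|^2) > 0$, hence $f(y^*) < h(y^*)$, forcing $\Opt_f < \Opt_h$.)
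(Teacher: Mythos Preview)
Your proof is correct and follows exactly the paper's approach: the corollary is presented there without a separate proof, as an immediate application of Lemma~\ref{lem:nonconvex-underestimator-tightness} under precisely the identifications you make (embed $Ay-b\in\cK$ into the ground set $C$, take $c_1(y)=\|y\|^2-1$ and $\alpha_1=\lambda_Q$). One small slip in your optional parenthetical aside: since $\lambda_Q<0$ and $1-\|y^*\|^2>0$, the quantity $\lambda_Q(1-\|y^*\|^2)$ is \emph{negative}, not positive---but your conclusion $f(y^*)<h(y^*)$ is nonetheless correct.
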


Because $Q - \lambda_Q I_n$ is not full rank, when $g$ is not orthogonal to $\Null(Q - \lambda_Q I_n)$, it is easy to see that the function $f(y)$ has no local minima on the interior of our domain. Then by Observation~\ref{obs:boundary-nonconvex}, the optimal solutions to \eqref{eqn:convex-trsSOC-conic} lie on $\bd(\left\{ y : \|y\| \leq 1, \ Ay - b \in \cK \right\})$. When $g$ is orthogonal to $\Null(Q - \lambda_Q I_n)$, then we can add $d \in \Null(Q - \lambda_Q I_n)$ to any point $y$ without changing the objective $f(y + d)$, hence there will always exist an optimal solution of \eqref{eqn:convex-trsSOC-conic} on $\bd(\left\{ y : \|y\| \leq 1, \ Ay - b \in \cK \right\})$. However, $f(y) = h(y)$ if and only if $\|y\| = 1$, but $f(y)$ may not be equal to $h(y)$ on all of $\bd(\left\{ y : \|y\| \leq 1, \ Ay - b \in \cK \right\})$. More precisely, we will have $f(y) < h(y)$ for $y \in \bd(\left\{ y : \|y\| \leq 1, \ Ay - b \in \cK \right\}) \cap \left\{ y : \|y\| < 1 \right\}$, so if all minima of $f(y)$ lie on this set, the convex relaxation \eqref{eqn:convex-trsSOC-conic} will not be tight.
Therefore, we next state a sufficient condition that ensures that there is always an optimal solution of \eqref{eqn:convex-trsSOC-conic} on the boundary of the unit ball.

\begin{condition}\label{cond:TRS-relaxation}
There exists a vector $d \neq 0$ such that $Qd = \lambda_Q d$, $Ad \in \cK$ and $g^\top d \leq 0$.
\end{condition}

\begin{theorem}\label{thm:trsSOC-tight-convex}
Suppose that $\lambda_Q < 0$ and that Condition~\ref{cond:TRS-relaxation} holds for the TRS given in \eqref{eqn:trsSOC-conic}. Then the convex relaxation given by \eqref{eqn:convex-trsSOC-conic} is tight.
\end{theorem}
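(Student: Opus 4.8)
The plan is to invoke Corollary~\ref{cor:trsSOC-tightness}: it suffices to exhibit an optimal solution $y^*$ of the convex relaxation \eqref{eqn:convex-trsSOC-conic} with $\|y^*\|=1$. So I start from an arbitrary optimal solution $\bar y$ of \eqref{eqn:convex-trsSOC-conic}; if $\|\bar y\|=1$ we are done, so assume $\|\bar y\|<1$. The idea is to move from $\bar y$ along the direction $d$ furnished by Condition~\ref{cond:TRS-relaxation} (or along $-d$), staying feasible and not increasing the objective, until the unit-ball constraint becomes active. First I would record the key algebraic fact: since $Qd=\lambda_Q d$, the quadratic form $Q-\lambda_Q I_n$ annihilates $d$, i.e. $d\in\Null(Q-\lambda_Q I_n)$, so for any scalar $s$,
\[
f(\bar y + s d) = (\bar y + sd)^\top(Q-\lambda_Q I_n)(\bar y+sd) + 2g^\top(\bar y+sd)+\lambda_Q = f(\bar y) + 2s\,g^\top d,
\]
using $(Q-\lambda_Q I_n)d=0$ twice. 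Because $g^\top d\le 0$ by Condition~\ref{cond:TRS-relaxation}, taking $s\ge 0$ gives $f(\bar y+sd)\le f(\bar y)$.

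Next I would check feasibility along the ray $\{\bar y + sd : s\ge 0\}$. The conic constraint is preserved: $A(\bar y+sd)-b = (A\bar y - b) + s(Ad)$, and both $A\bar y - b\in\cK$ (feasibility of $\bar y$) and $Ad\in\cK$ (Condition~\ref{cond:TRS-relaxation}), so their nonnegative combination lies in the convex cone $\cK$. For the ball constraint, $\|\bar y+sd\|^2 = \|\bar y\|^2 + 2s\,\bar y^\top d + s^2\|d\|^2$ is a (nonconstant, since $d\neq 0$) quadratic in $s$ tending to $+\infty$; since $\|\bar y\|<1$, there is a smallest $s^*>0$ with $\|\bar y + s^* d\|=1$. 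If $\bar y^\top d\ge 0$ this is immediate; if $\bar y^\top d<0$ one still picks the larger root, which is positive. Set $y^*:=\bar y+s^*d$. Then $y^*$ is feasible for \eqref{eqn:convex-trsSOC-conic}, satisfies $f(y^*)\le f(\bar y)=\Opt_f$ (hence is optimal), and has $\|y^*\|=1$. By Corollary~\ref{cor:trsSOC-tightness}, the relaxation is tight.

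The only subtlety — and the part to be careful about rather than the main obstacle — is the degenerate direction of motion: one needs $\|\cdot\|$ to actually increase along the chosen ray, which is guaranteed because $d\neq 0$ forces the coefficient $\|d\|^2$ of $s^2$ to be positive, so the ray genuinely exits the open unit ball at a finite positive $s^*$. (If one preferred, one could instead invoke Observation~\ref{obs:boundary-nonconvex}-style reasoning, but here we don't need $f$ to have no interior local minima: the explicit shift argument above is self-contained and also covers the case where $g\perp\Null(Q-\lambda_Q I_n)$, since then $g^\top d=0$ and the objective is simply unchanged along $d$.) Everything else is the routine verification above that $\cK$ is a convex cone closed under nonnegative combinations and that $f$ is constant-plus-linear along $d$.
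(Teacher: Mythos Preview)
Your proof is correct and follows essentially the same approach as the paper: start from an optimal solution of the convex relaxation with norm strictly less than one, move along the direction $d$ from Condition~\ref{cond:TRS-relaxation} (using $(Q-\lambda_Q I_n)d=0$ so that $f$ changes only linearly via $g^\top d\le 0$, and $Ad\in\cK$ so the conic constraint is preserved) until hitting the unit sphere, then invoke Corollary~\ref{cor:trsSOC-tightness}. The only cosmetic difference is that the paper separately notes that $g^\top d<0$ would contradict optimality of the starting point (forcing $g^\top d=0$), whereas you handle both cases uniformly by observing $f(y^*)\le \Opt_f$; your treatment is slightly cleaner but the argument is the same.
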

\begin{proof}
Let $y^*$ be an optimum solution for \eqref{eqn:convex-trsSOC-conic}. 
If  $\|y^*\| = 1$, then from Corollary~\ref{cor:trsSOC-tightness}, the result follows immediately. Hence, we assume $\|y^*\| < 1$.

Let $d \neq 0$ be the vector from Condition~\ref{cond:TRS-relaxation}, thus $Qd = \lambda_Q d$, $Ad \in \cK$ and $g^\top d \leq 0$. Then for any $\epsilon > 0$, $A(y^* + \epsilon d) - b = (Ay^* - b) + \epsilon A d \in \cK$ because $\cK$ is a convex cone and $Ad \in \cK$ by assumption. Because $\|y^*\| < 1$, we may increase $\epsilon$ until $\|y^* + \epsilon d\| = 1$ and the vector $y^* + \epsilon d$ is still feasible. Note $(Q - \lambda_Q I_n) d = 0$, so for any $\epsilon > 0$,
\[
f(y^* + \epsilon d) = f(y^*) + 2(g^\top d) \epsilon \leq f(y^*).
\]
If $g^\top d < 0$, this violates optimality of $y^*$ since $\epsilon > 0$, thus $g^\top d = 0$. Then the vector $y^* + \epsilon d$ is an alternative optimum solution to \eqref{eqn:convex-trsSOC-conic} satisfying $\|y^* + \epsilon d\|=1$. Hence, the tightness of the relaxation \eqref{eqn:convex-trsSOC-conic} follows from Corollary~\ref{cor:trsSOC-tightness}.
\end{proof}

\begin{remark}\label{rem:cond:TRS-relaxation}
From the definition of $\lambda_Q$, Condition~\ref{cond:TRS-relaxation} is immediately satisfied for the classical TRS~\eqref{eqn:trsSOC-classical}  without additional conic constraints, i.e., when $A=I_n$, $b=0$, and $\cK=\R^n$.
\epr
\end{remark}

Consequently, in the case of  classical TRS, Remark~\ref{rem:cond:TRS-relaxation} implies the following specialization of Theorem~\ref{thm:trsSOC-tight-convex}. 
\begin{theorem}\label{thm:trsSOC-classical-tight-convex}
When $\lambda_Q<0$, a tight convex relaxation of classical TRS \eqref{eqn:trsSOC-classical} is given by 
\begin{equation}\label{eqn:convex-trsSOC-classical}
\Opt_{f} = \min_y \left\{ f(y) := y^\top (Q - \lambda_Q I_n) y + 2g^\top y + \lambda_Q :~ \|y\| \leq 1 \right\}.
\end{equation}
\end{theorem}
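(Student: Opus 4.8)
The plan is to obtain Theorem~\ref{thm:trsSOC-classical-tight-convex} as an immediate specialization of Theorem~\ref{thm:trsSOC-tight-convex}, so the only real work is to check that the hypotheses of the latter are automatically in force for the classical TRS. First I would record that setting $A = I_n$, $b = 0$, and $\cK = \R^n$ in \eqref{eqn:trsSOC-conic} recovers exactly \eqref{eqn:trsSOC-classical}, that the conic constraint $Ay - b \in \cK$ then becomes vacuous, and that the function $f$ of \eqref{eqn:f-function} specializes to $f(y) = y^\top(Q - \lambda_Q I_n)y + 2g^\top y + \lambda_Q$ --- this is just the identity $f(y) = h(y) + \lambda_Q(1 - \|y\|^2)$ written out. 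Consequently the relaxation \eqref{eqn:convex-trsSOC-conic} becomes precisely \eqref{eqn:convex-trsSOC-classical}, and $f$ is convex since $Q - \lambda_Q I_n \succeq 0$.

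Next I would verify Condition~\ref{cond:TRS-relaxation} in this setting. Because $\lambda_Q = \lambda_{\min}(Q)$ is an eigenvalue of the symmetric matrix $Q$, there is a nonzero eigenvector $v$ with $Qv = \lambda_Q v$. Put $d := v$ if $g^\top v \leq 0$ and $d := -v$ otherwise; then $d \neq 0$, $Qd = \lambda_Q d$, and $g^\top d \leq 0$ by construction, while $Ad = d \in \R^n = \cK$ trivially. Hence Condition~\ref{cond:TRS-relaxation} holds, and Theorem~\ref{thm:trsSOC-tight-convex} applies directly to yield the tightness of \eqref{eqn:convex-trsSOC-classical}; this is exactly the content flagged in Remark~\ref{rem:cond:TRS-relaxation}.

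There is no genuine obstacle here: the argument is pure bookkeeping, and all the substantive content lies in Lemma~\ref{lem:nonconvex-underestimator-tightness}, Corollary~\ref{cor:trsSOC-tightness}, and Theorem~\ref{thm:trsSOC-tight-convex}. The only point that warrants a moment's care is the sign adjustment $d \mapsto -d$: one must use that $\Null(Q - \lambda_Q I_n)$, equivalently the $\lambda_Q$-eigenspace, is a linear subspace, so that negating an eigenvector again produces a feasible direction along which $f$ is constant --- this is precisely what lets us force $g^\top d \leq 0$ irrespective of the sign of $g^\top v$, and it is the same mechanism exploited in the proof of Theorem~\ref{thm:trsSOC-tight-convex}.
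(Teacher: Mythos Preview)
Your proposal is correct and follows exactly the paper's approach: the theorem is stated as an immediate specialization of Theorem~\ref{thm:trsSOC-tight-convex}, with Remark~\ref{rem:cond:TRS-relaxation} supplying the verification of Condition~\ref{cond:TRS-relaxation} in the classical setting $A=I_n$, $b=0$, $\cK=\R^n$. Your sign-flip argument to secure $g^\top d \le 0$ is the standard way to justify that remark, so there is nothing to add.
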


\begin{remark}\label{rem:FortinWolkowicz}
In order to handle a particular `hard case' of classical TRS,  
Fortin and Wolkowicz \cite{FortinWolkowicz2004} introduce and analyze the convex reformulation \eqref{eqn:convex-trsSOC-classical} (see \cite[Lemma 2.3]{FortinWolkowicz2004} and \cite[Section 7]{FortinWolkowicz2004}). 
We believe \eqref{eqn:convex-trsSOC-classical} can be of more use than stated in \cite{FortinWolkowicz2004}. In particular, by re-analyzing \eqref{eqn:trsSOC-classical}, we are able to both 
\begin{enumerate}[(i)]%[leftmargin=*,label=\roman*.]
\item improve on the previously best-known theoretical convergence rate guarantees for solving the classical TRS (see Remark~\ref{rem:TRScompare} in Section~\ref{sec:SOCComplexity}), and 
\item establish the tightness of the convex reformulation \eqref{eqn:convex-trsSOC-conic} %\eqref{eqn:convex-trsSOC-classical} 
for TRS with conic constraints under appropriate conditions (see Theorem~\ref{thm:trsSOC-tight-convex}) and also for TRS with hollow constraints covering 
interval-bounded 
TRS (see \cite{BenTalTeboulle1996,BM14,PongWolkowicz2014,SternWolkowicz1995,YeZhang2003}), under a condition well-studied in the literature (see Corollary~\ref{cor:TRS-convexify-hollows} and Theorem~\ref{thm:TRS-convexify-hollows}).
\end{enumerate}
\epr
\end{remark}

\subsection{Discussion of Condition~\ref{cond:TRS-relaxation} and Related Conditions from the Literature}\label{sec:conditions}

For TRS with conic constraints  \eqref{eqn:trsSOC-conic}, Condition~\ref{cond:TRS-relaxation} is related to and generalizes many other conditions examined in the literature. 

A result similar to Theorem~\ref{thm:trsSOC-tight-convex} was implicitly proven by Jeyakumar and Li \cite{JeyakumarLi2013} under a dimensionality condition for the case of linear and conic quadratic constraints. We state the linear version of their condition below; the conic quadratic one is very similar.
\begin{condition}\label{cond:JeyakumarLi}
Consider the case of nonnegative orthant, i.e., $\cK = \R^m_+$. Suppose that the system of linear inequalities, i.e., the constraint $Ay - b \in \cK$ satisfies the requirement that $\dim(\Null(Q-\lambda_Q I_n)) \geq n-\dim(\Null(A)) +1$.
\end{condition}

\begin{lemma}\label{lem:JeyakumarLi}
Condition~\ref{cond:TRS-relaxation} generalizes the dimensionality condition of Jeyakumar and Li \cite{JeyakumarLi2013}, i.e., Condition~\ref{cond:JeyakumarLi}, stated for linear and conic quadratic constraints.
\end{lemma}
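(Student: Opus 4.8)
The plan is to show that Condition~\ref{cond:JeyakumarLi} implies Condition~\ref{cond:TRS-relaxation} by a dimension-counting argument producing the required vector $d$. The key observation is that the dimensionality hypothesis $\dim(\Null(Q-\lambda_Q I_n)) \geq n - \dim(\Null(A)) + 1$ can be rewritten as $\dim(\Null(Q-\lambda_Q I_n)) + \dim(\Null(A)) \geq n+1$. Since both subspaces sit inside $\R^n$, the standard dimension formula $\dim(U)+\dim(V) = \dim(U\cap V) + \dim(U+V) \leq \dim(U\cap V) + n$ forces $\dim\bigl(\Null(Q-\lambda_Q I_n) \cap \Null(A)\bigr) \geq 1$. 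Hence there exists $d \neq 0$ with $Qd = \lambda_Q d$ and $Ad = 0$; in particular $Ad = 0 \in \cK = \R^m_+$, and the third requirement $g^\top d \leq 0$ is obtained for free, since if $g^\top d > 0$ we simply replace $d$ by $-d$, which preserves $Qd = \lambda_Q d$ and $Ad = 0$. This establishes Condition~\ref{cond:TRS-relaxation}.

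For the conic quadratic version of Jeyakumar--Li's condition, I would run essentially the same argument, taking care with how the dimensionality count is phrased there. The constraint takes the block form $A_i y - b_i \in \cK_i$ where each $\cK_i$ is a second-order (Lorentz) cone, and the analogous hypothesis lower-bounds $\dim(\Null(Q-\lambda_Q I_n))$ relative to the total number of rows in the stacked matrix $A = [A_1;\dots;A_k]$. The same intersection-dimension argument yields a nonzero $d$ with $Qd = \lambda_Q d$ and $Ad = 0$, whence $A_i d = 0 \in \cK_i$ for every $i$ (the origin lies in every cone), and again flipping the sign of $d$ if necessary arranges $g^\top d \leq 0$. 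So in both cases the vector delivered by the dimension count automatically satisfies all three clauses of Condition~\ref{cond:TRS-relaxation}.

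To complete the statement that Condition~\ref{cond:TRS-relaxation} \emph{generalizes} (i.e., is strictly weaker than) Condition~\ref{cond:JeyakumarLi}, I would also exhibit a small example --- say a $2\times 2$ matrix $Q$ with a simple minimum eigenvalue (so $\dim\Null(Q-\lambda_Q I_n) = 1$) together with a single linear inequality $a^\top y \leq b$ chosen so that the eigenvector $d$ of $\lambda_Q$ happens to satisfy $a^\top d \leq 0$ and $g^\top d \leq 0$. Then Condition~\ref{cond:TRS-relaxation} holds with this $d$, while the dimensionality inequality $1 = \dim\Null(Q-\lambda_Q I_n) \geq n - \dim\Null(A) + 1 = 2 - 1 + 1 = 2$ fails. (The excerpt indicates such an example is provided later in the paper, so here I would simply note its existence.)

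I do not anticipate a genuine obstacle: the entire content is the elementary subspace-dimension inequality plus the trivial observations that $0$ lies in every convex cone and that negating $d$ fixes the sign of $g^\top d$. The only mild care needed is in transcribing the conic quadratic form of Jeyakumar--Li's condition faithfully so that the row-count bookkeeping matches, but the logical skeleton is identical to the linear case.
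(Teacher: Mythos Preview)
Your proposal is correct and follows essentially the same approach as the paper: rewrite the dimensionality hypothesis as $\dim(\Null(Q-\lambda_Q I_n)) + \dim(\Null(A)) \geq n+1$, use the subspace-dimension inequality to extract a nonzero $d$ in the intersection, note $Ad = 0 \in \cK$, and flip the sign of $d$ if needed to arrange $g^\top d \leq 0$. The paper's proof is terser (it does not spell out the conic quadratic variant or the strictness example within the lemma itself), but the logical content is identical.
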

\begin{proof} 
Suppose Condition~\ref{cond:JeyakumarLi} holds. Then 
\[\dim(\Null(A)) + \dim(\Null(Q-\lambda_Q I_n)) \geq n+1;\] 
thus, there must exist $d \neq 0$ which is in the intersection $\Null(A)\cap \Null(Q-\lambda_Q I_n)$. That is,  $Qd = \lambda_Q d$ and $Ad = 0 \in \R^m_+ = \cK$. If $g^\top d \leq 0$, then Condition~\ref{cond:TRS-relaxation} holds with the vector $d$. If $g^\top d > 0$, then Condition~\ref{cond:TRS-relaxation} holds with the vector $d' = -d$.
\end{proof}

Jeyakumar and Li~\cite{JeyakumarLi2013} demonstrates that Condition~\ref{cond:JeyakumarLi} is satisfied in a number of cases related to the robust least squares and robust SOC programming problems. As a consequence of Lemma~\ref{lem:JeyakumarLi}, our Condition~\ref{cond:TRS-relaxation} is satisfied in these cases as well. That said, Condition~\ref{cond:TRS-relaxation} is more general than Condition~\ref{cond:JeyakumarLi} as demonstrated by the following example.

\begin{example}\label{ex:condJLdiff}
For the problem data given by
\[
Q = \begin{bmatrix}1 & 0\\ 0 & -1\end{bmatrix}, \quad g = \begin{bmatrix}1\\0\end{bmatrix}, \quad A = \begin{bmatrix}1 & -1\\ -1 & -1\end{bmatrix}, \quad b = {1\over 2}\begin{bmatrix} 1\\ 1 \end{bmatrix}, \quad \cK = \R^2_+,
\]
Condition~\ref{cond:TRS-relaxation} is satisfied with $d = [0;-1]$, but Condition~\ref{cond:JeyakumarLi} is not. 
\epr
\end{example}

Ben-Tal and  den Hertog \cite{BenTalDenHertog2014} and Locatelli \cite{Locatelli2016} study a different SOC-based convex relaxation of \eqref{eqn:trsSOC-conic} given in a lifted space when $Q$ is a diagonal matrix and the additional constraints are linear, i.e., $\cK = \bbR^m_+$. Let $Q = \Diag(\{q_1,\ldots,q_n\})$; then this reformulation is given by 
\begin{equation}\label{eqn:lifted_relaxation}
\min_{y,z} \left\{ \sum_{i=1}^{n} q_i z_i + 2g^\top y:~\begin{array}{l} y_i^2 \leq z_i, \quad i=1,\ldots,n\\
\sum_{i=1}^{n} z_i \leq 1\\
A y \geq b
\end{array}\right\}.
\end{equation}
It was established in \cite{BenTalDenHertog2014} that for the classical TRS this convex reformulation is tight. 
Tightness of this relaxation for the TRS with additional linear constraints is studied in \cite{Locatelli2016} under the following condition:
\begin{condition}\label{cond:Locatelli}
Denote $Q = \Diag(\{q_1,\ldots,q_n\})$ and $J = \{j : q_j = \lambda_Q\}$. Also, define $A_J$ to be the matrix composed of columns of $A$ which correspond to the indices in $J$, and define $g_J$ analogously. For all $\epsilon > 0$, there exists $h_\epsilon$ with $\|h_\epsilon\| \leq \epsilon$ such that $\{\mu \geq 0 :~ A_J^\top \mu + g_J + h_\epsilon = 0\} = \emptyset$.
\end{condition}
\begin{lemma}\label{lem:Locatelli}
When $Q$ is diagonal and $\cK = \bbR^m_+$, Conditions~\ref{cond:TRS-relaxation}~and~\ref{cond:Locatelli} are equivalent.
\end{lemma}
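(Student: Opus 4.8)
The plan is to show the two conditions are equivalent by unpacking both into concrete statements about the index set $J = \{j : q_j = \lambda_Q\}$ and then matching them. First observe that when $Q = \Diag(\{q_1,\dots,q_n\})$, the eigenspace $\Null(Q - \lambda_Q I_n)$ is precisely the coordinate subspace $\Span\{e_j : j \in J\}$. Hence a vector $d \neq 0$ with $Qd = \lambda_Q d$ is exactly a nonzero vector supported on $J$; writing $d = \sum_{j \in J} d_j e_j$, we have $Ad = A_J d_J$ and $g^\top d = g_J^\top d_J$. So Condition~\ref{cond:TRS-relaxation} (with $\cK = \R^m_+$) becomes: \emph{there exists $d_J \neq 0$ with $A_J d_J \geq 0$ and $g_J^\top d_J \leq 0$.} The goal is to prove this is equivalent to Condition~\ref{cond:Locatelli}, namely that for every $\epsilon > 0$ there is $h_\epsilon$ with $\|h_\epsilon\| \leq \epsilon$ such that the system $A_J^\top \mu + g_J + h_\epsilon = 0$, $\mu \geq 0$ has no solution — equivalently, that $-g_J$ does not lie in the interior of the cone $\{A_J^\top \mu : \mu \geq 0\} = \Range(A_J^\top \cdot \R^m_+)$, i.e., $-g_J \notin \intt(\cone(\text{rows of } A_J))$, interpreting interior relative to the ambient space $\R^{|J|}$ when that cone is full-dimensional, and more carefully as ``not in the relative interior together with a transversality condition'' in general. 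Actually the cleanest reading of Condition~\ref{cond:Locatelli} is: $-g_J \notin \intt(\{A_J^\top \mu : \mu \ge 0\})$ where $\intt$ is taken in $\R^{|J|}$; the $h_\epsilon$ perturbation is exactly saying $-g_J$ is not an interior point of that cone.

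The main work is then a duality/separation argument connecting these two. The natural tool is a theorem-of-the-alternative or a conic separation statement. One direction: suppose Condition~\ref{cond:TRS-relaxation} fails, so there is \emph{no} nonzero $d_J$ with $A_J d_J \geq 0$ and $g_J^\top d_J \leq 0$; I want to conclude $-g_J$ is interior to $\{A_J^\top\mu : \mu \ge 0\}$, which forces Condition~\ref{cond:Locatelli} to fail. The hypothesis ``no nonzero $d_J$ with $A_J d_J \ge 0$, $g_J^\top d_J \le 0$'' says the cone $\{d_J : A_J d_J \ge 0\}$ meets the halfspace $\{g_J^\top d_J \le 0\}$ only at the origin; by a pointedness/Gordan-type argument this is equivalent to $g_J^\top d_J > 0$ for all nonzero $d_J$ in that cone, which by conic LP duality (Farkas) is equivalent to $-g_J \in \intt(\{A_J^\top \mu : \mu \ge 0\})$ — here one uses that strict positivity of a linear functional on a closed cone minus the origin corresponds to the negative of its gradient lying strictly inside the dual description. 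The converse direction runs the same equivalences backwards: if Condition~\ref{cond:Locatelli} fails then $-g_J$ is interior to $\{A_J^\top\mu:\mu\ge0\}$, so every small perturbation keeps it in the cone, so $g_J^\top d_J > 0$ on the cone $\{A_J d_J \ge 0\}\setminus\{0\}$, which is exactly the negation of Condition~\ref{cond:TRS-relaxation}.

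I expect the main obstacle to be handling the boundary/degenerate cases precisely — in particular making rigorous what ``interior'' means when $\cone(A_J^\top \R^m_+)$ is not full-dimensional in $\R^{|J|}$, and ensuring the perturbation $h_\epsilon$ in Condition~\ref{cond:Locatelli} is interpreted consistently (it effectively pushes $-g_J$ out of any lower-dimensional cone, so Condition~\ref{cond:Locatelli} holds automatically unless $\cone(A_J^\top\R^m_+)$ is full-dimensional \emph{and} contains $-g_J$ in its interior). So I would first reduce to the statement ``Condition~\ref{cond:Locatelli} fails $\iff$ $-g_J \in \intt_{\R^{|J|}}(\{A_J^\top\mu:\mu\ge0\})$,'' then invoke the conic Farkas lemma / Gordan's theorem in the sharp form: for a matrix $M$, exactly one of (a) $\exists\, d \neq 0,\ Md \geq 0,\ c^\top d \leq 0$, or (b) $-c \in \intt(\{M^\top\mu:\mu\ge0\})$ holds — taking $M = A_J$, $c = g_J$. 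Citing or proving this sharp alternative (it is a standard consequence of LP strong duality applied to $\min\{c^\top d : Md \ge 0,\ \|d\|_\infty \le 1\}$ having optimal value $0$ versus $<0$) closes the lemma. The rest is bookkeeping to translate between the eigenvector language of Condition~\ref{cond:TRS-relaxation} and the $A_J,g_J$ language of Condition~\ref{cond:Locatelli}, which is immediate from diagonality of $Q$.
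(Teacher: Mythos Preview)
Your overall strategy---restrict to the coordinate subspace indexed by $J$, recast Condition~\ref{cond:Locatelli} as ``$-g_J$ is not in the interior of the polyhedral cone $C := \{A_J^\top \mu : \mu \ge 0\}$,'' and then link this to Condition~\ref{cond:TRS-relaxation} via a strict theorem of the alternative---is natural and self-contained. The paper takes a different and shorter route: rather than passing through the interior-of-cone reformulation, it invokes Proposition~3.3 of Locatelli, which states that Condition~\ref{cond:Locatelli} is equivalent to the linear program $\max\{-g_J^\top \hat y : A_J \hat y \le 0\}$ being unbounded or having multiple optima, and then reads off the vector $d$ required for Condition~\ref{cond:TRS-relaxation} directly from a nonzero feasible direction of that LP (and conversely). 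So the paper outsources half the work to the cited result, whereas you try to do everything from scratch.

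There is, however, a genuine gap in your duality step. The sharp alternative you state has the wrong sign: for a closed polyhedral cone $C$ with dual $C^* = \{d : A_J d \ge 0\}$, the statement ``$g_J^\top d > 0$ for every $d \in C^* \setminus \{0\}$'' is equivalent to $g_J \in \intt(C)$, \emph{not} to $-g_J \in \intt(C)$. A one-dimensional check exposes this: with $M = 1$ and $c = -1$ in your notation, alternative (a) holds (take $d = 1$: $Md = 1 \ge 0$, $c^\top d = -1 \le 0$) and alternative (b) also holds ($-c = 1 \in \intt(\R_+)$), so they are not mutually exclusive as you claim; with $c = +1$ both fail. Carrying your argument through with the corrected sign yields ``Condition~\ref{cond:TRS-relaxation} fails $\Leftrightarrow g_J \in \intt(C)$,'' which does not match your (correct) translation ``Condition~\ref{cond:Locatelli} fails $\Leftrightarrow -g_J \in \intt(C)$,'' and the chain of equivalences breaks at exactly the step you flagged as the main obstacle. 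The paper's LP-based route avoids this pitfall because it never passes through the interior-of-cone reformulation; you would need to revisit the sign conventions (in particular how the constraint $Ay - b \in \R^m_+$ here relates to Locatelli's) to repair your argument.
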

\begin{proof}
It is shown in \cite[Proposition 3.3]{Locatelli2016} that Condition~\ref{cond:Locatelli} is equivalent to the program $\max_{\hat{y} \in \bbR^{|J|}} \{-g_J^\top \hat{y} : A_J \hat{y} \leq 0 \}$ being unbounded above or having multiple optima. In the former case, there must exist an extreme ray $\hat{d} \neq 0$ for which $g_J^\top \hat{d} < 0$ and $A_J \hat{d} \leq 0$. Setting $d$ to be the vector consisting of $\hat{d}$ in the $J$ entries and $0$ otherwise gives us $Q d = \lambda_Q d$, $Ad \leq 0$ and $g^\top d < 0$, which satisfies Condition~\ref{cond:TRS-relaxation}. In the latter case, we know that the zero vector is always a solution with objective value $0$, so having multiple optima means there exists $\hat{d} \neq 0$ such that $A_j \hat{d} \leq 0$ and $g_J^\top \hat{d} = 0$. Then a similar argument follows to show that Condition~\ref{cond:TRS-relaxation} holds.

Conversely, if Condition~\ref{cond:TRS-relaxation} holds, because $Q$ is diagonal, the vector $d$ given must have zeros everywhere except for entries in $J$. If $g^\top d < 0$, then the program above is unbounded, but if $g^\top d = 0$, then the program above has multiple optima since we can add $d$ to any optimal solution. Thus, Condition~\ref{cond:Locatelli} holds.
\end{proof}

\begin{remark}\label{rem:TestCond}
Condition~\ref{cond:TRS-relaxation} is equivalent to the conic program
\[ \min_d \left\{ g^\top d :~ (Q - \lambda_Q I_n) d = 0,\ Ad \in \cK \right\} \]
being unbounded below or having multiple optimal solutions. This follows from an extension of the proof of Lemma~\ref{lem:Locatelli} to the conic case. 
\epr
\end{remark}

\begin{remark}\label{rem:Locatelli}
Despite Condition~\ref{cond:TRS-relaxation} and  Condition~\ref{cond:Locatelli} being equivalent when $\cK=\R^m_+$, there are two major distinctions between our convex reformulation \eqref{eqn:convex-trsSOC-conic} and the one from \cite{BenTalDenHertog2014,Locatelli2016}. 
First, in order to diagonalize the matrix $Q$ in TRS and hence form the convex reformulation of \cite{BenTalDenHertog2014,Locatelli2016}, one needs to perform a full eigenvalue decomposition, 
which takes approximately $O(n^3)$ time and is more expensive than computing only the minimum eigenvalue (approximately $O(n^2)$ time) that is needed by our convex reformulation. 
Second, our convex reformulation \eqref{eqn:convex-trsSOC-conic} works in the original space of variables and thus preserves the nice structure of the domain, yet \eqref{eqn:lifted_relaxation} introduces new variables $z_1,\ldots,z_n$. Preserving the nice structure of the original convex domain becomes important when FOMs are applied to a convex reformulation of TRS. We discuss this issue in the case of classical TRS  in Section~\ref{sec:SOCComplexity}.
\epr
\end{remark}
  
\begin{remark}\label{rem:JeyakumarLi}
In contrast to the results given in \cite{JeyakumarLi2013} and \cite{Locatelli2016}, Theorem~\ref{thm:trsSOC-tight-convex} holds for general conic constraints when Condition~\ref{cond:TRS-relaxation} holds. Note that such general conic constraints can represent a variety of convex restrictions, and in particular, they may include positive semidefiniteness requirements. 
\epr
\end{remark}

We next present an example to illustrate that when Condition~\ref{cond:TRS-relaxation} is violated, we may not be able to give the exact convex reformulation. Moreover, a slight modification of this example demonstrates further that Condition~\ref{cond:TRS-relaxation} is not necessary for giving the exact convex reformulation. % remains an open question.
\begin{example}\label{ex:condNecessity1}
Suppose we are given the problem data:
\[
Q = \begin{bmatrix}1 & 0\\ 0 & -2\end{bmatrix}, \quad g = \begin{bmatrix}-3\\0\end{bmatrix}, \quad A = \begin{bmatrix}0 & 1\\ 0 & -1\end{bmatrix}, \quad b = {1\over 2}\begin{bmatrix} 1\\ 1 \end{bmatrix}, \quad \cK = \R^2_+.
\]
Then Condition~\ref{cond:TRS-relaxation} is violated. To see this, note that any $d$ satisfying $Qd = \lambda_Q d$ is of the form $d = [0; d_2]$. However, $Ad = [d_2; -d_2]$, so if $d_2 \neq 0$, $Ad \not\in \cK = \R^2_+$. For this problem data, $h(y) = y_1^2 - 2y_2^2 - 3y_1$ and $f(y) = 3y_1^2 - 3y_1 - 2$. It is easy to compute the minimizers of $f(y)$ over the unit ball to be the line $y_1 = 1/2$, with value $-11/4$. The constraints $Ay - b \in \cK$ are equivalent to $-1/2 \leq y_2 \leq 1/2$. 
\begin{figure}[htp!]
\centering
  \subfigure[$h(y) \leq -11/4 = \Opt_f$]{%
    \label{ex1-1}%
    \includegraphics[width=0.35\textwidth]{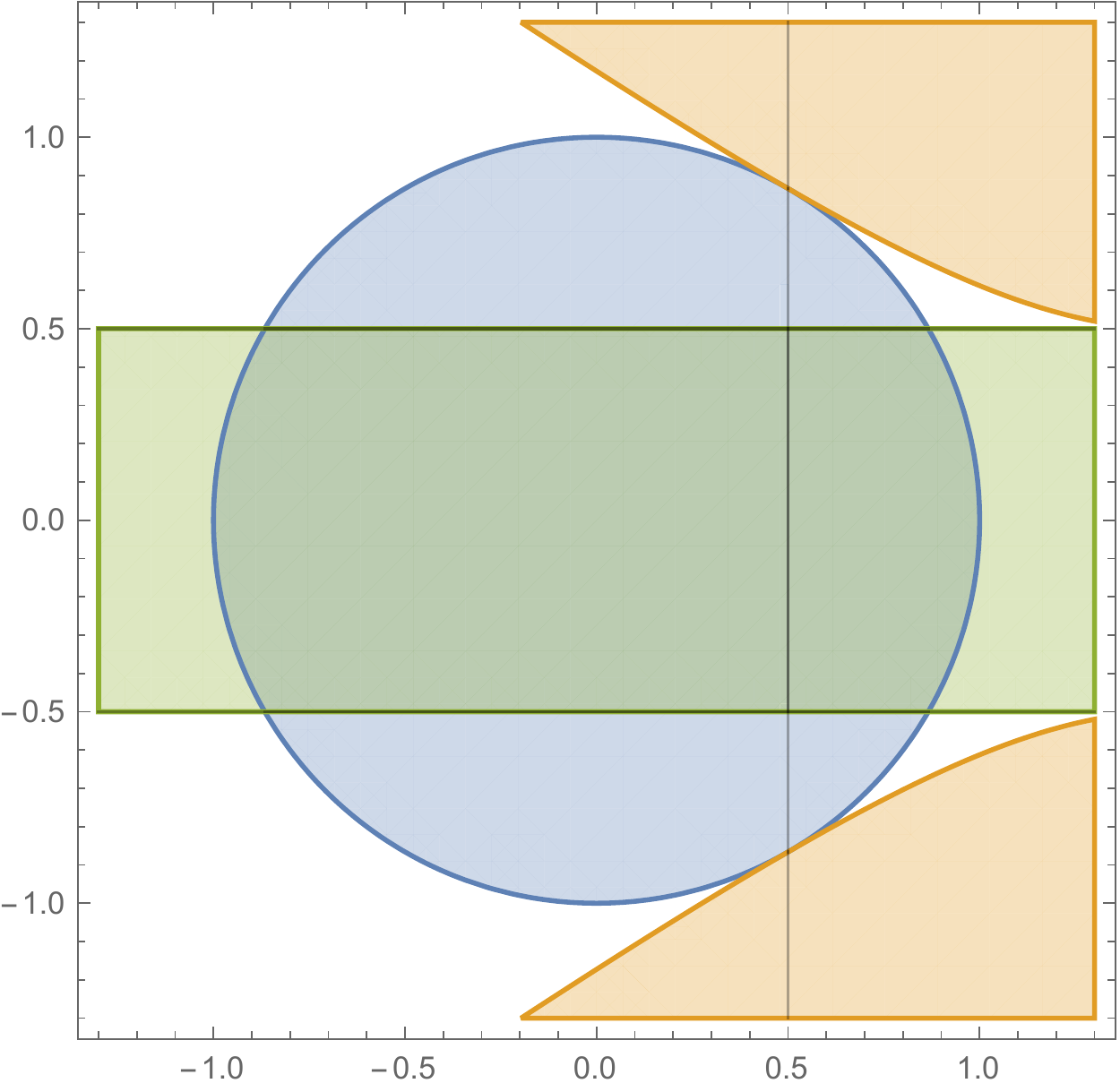}%
  } 
  \subfigure[$h(y) \leq (1-6\sqrt{3})/2 = \Opt_h$]{%
    \label{ex1-2}%
    \includegraphics[width=0.35\textwidth]{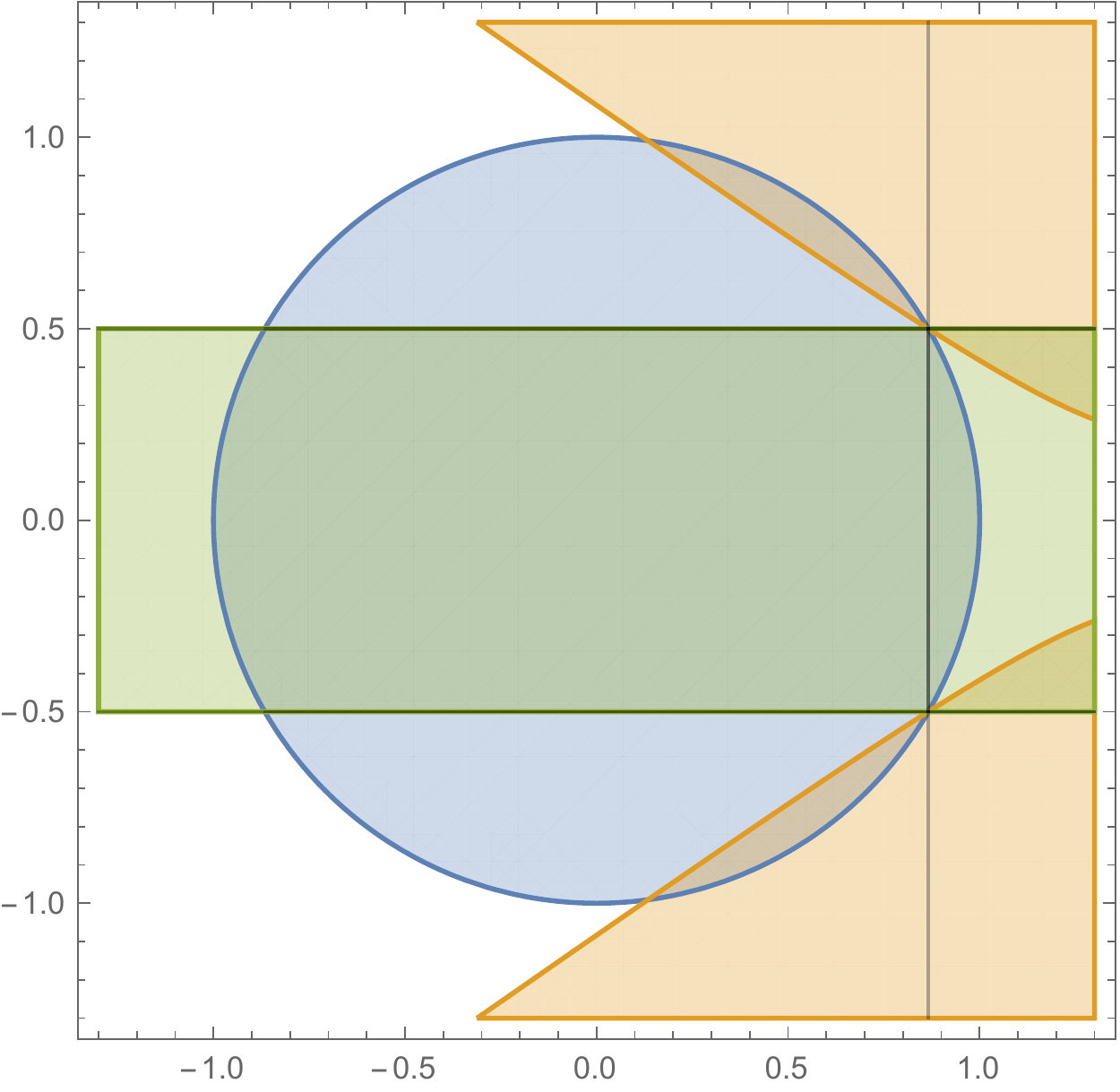}%
  } 
\caption{Contour plots of $h(y)$ over the feasible set.}
\label{fig:ex1}
\end{figure}

Figure~\ref{fig:ex1} shows that the minimizers of $h(y)$ over just the unit ball $\|y\| \leq  1$ lie on the boundary at $y = [1/2;\pm \sqrt{3}/2]$. Due to the linear constraints $-1/2 \leq y_2 \leq 1/2$, these points are cut off from the feasible region. As a result, any minimizer of $f(y)$ (i.e., the line $y_1 = 1/2$) inside the feasible region has norm strictly less than 1. Then by Corollary~\ref{cor:trsSOC-tightness}, the relaxation~\eqref{eqn:convex-trsSOC-conic} is not tight.

Finally, note that if we were to change our linear constraints to $-0.9 \leq y_2 \leq 0.9$, then our relaxation would be tight, while Condition~\ref{cond:TRS-relaxation} would still not be satisfied. However, for both cases in this example, the SDP relaxation of \cite{SturmZhang2003,YeZhang2003,BurerAnstreicher2013} strengthened with additional SOC-RLT inequalities is tight. 
\epr
\end{example}

A variant of Condition~\ref{cond:TRS-relaxation} is instrumental in giving exact convex hull characterization of the sets associated with the TRS  \eqref{eqn:trsSOC-conic}. We discuss these further in Section~\ref{sec:TRSConvexification}.

\subsection{Complexity of Solving Our Convex Reformulations}\label{sec:SOCComplexity}
In this section, we explore the complexity of solving our convex relaxation/reformulation of TRS via FOMs. 
Our convex relaxation/reformulation of TRS \eqref{eqn:convex-trsSOC-conic} and its variants have the same domain as their original nonconvex counterparts \eqref{eqn:trsSOC-conic} and thus are solvable via interior point methods and standard software as long as the cone $\cK$ has an explicit barrier function. However, because the standard polynomial-time  interior point methods have expensive iterations in terms of their dependence on the problem dimension, here we mainly focus on FOMs with cheap iterations. 
We next discuss the complexity of solving our convex reformulation of the classical TRS given by \eqref{eqn:trsSOC-classical} via Nesterov's accelerated gradient descent algorithm \cite{Nesterov_83}, an optimal FOM for this class of problems. 
Once again, the main distinction between solving  \eqref{eqn:convex-trsSOC-conic} as opposed to \eqref{eqn:convex-trsSOC-classical} via FOMs lies in how the projection onto the respective domain is handled. That is, whenever efficient projection onto the original domain is present, our discussion below will remain applicable to the conic case  \eqref{eqn:convex-trsSOC-conic} as well.

The reformulation \eqref{eqn:convex-trsSOC-classical} of classical TRS \eqref{eqn:trsSOC-classical} (or the convex relaxation \eqref{eqn:convex-trsSOC-conic} of TRS \eqref{eqn:trsSOC-conic}) is an SOC program (convex program) and can easily be built whenever $\lambda_Q$ is available to us. Moreover, computing $\lambda_Q$, the minimum eigenvalue of $Q$,  itself is a TRS with no linear term because 
\[
\lambda_Q = \min_y \left\{ y^\top Q y :~ \|y\| \leq 1 \right\}.
\]
There exist many efficient algorithms for computing the minimum eigenvalue of a symmetric matrix $Q$. One such algorithm that is effective for large sparse matrices is the Lanczos method \cite[Chapter 10]{GolubVanLoan1996book}. Implemented with a random start, this method enjoys the following probabilistic convergence guarantee (see \cite[Section 4]{KuczynskiWozniakowski1992estimating} and \cite[Section 5]{HazanKoren2016}): with probability at least $1-\delta$, the Lanczos method correctly estimates $\lambda_Q$ to within $\epsilon$-accuracy in $O\left( \sqrt{\|Q\|} \log(n/\delta) / \sqrt{\epsilon} \right)$ iterations. Furthermore, each iteration requires only matrix-vector products, and hence takes $O(N)$ time, where $N$ is the number of nonzero entries in $Q$. Consequently, with probability at least $1-\delta$, the randomized Lanczos method estimates $\lambda_Q$ to within $\epsilon$-accuracy in time $O\left( N \sqrt{\|Q\|} \log(n/\delta) / \sqrt{\epsilon} \right)$.

Given $\lambda_Q$, problem~\eqref{eqn:convex-trsSOC-classical}  
is simply minimizing a smooth convex quadratic function $f(y)$ with smoothness parameter $2(\lambda_{\max}(Q) - \lambda_Q) \leq 4\|Q\|$ over the unit ball. Therefore, this problem can be efficiently solved using Nesterov's accelerated gradient descent algorithm \cite{Nesterov_83},
which obtains an $\epsilon$-accurate solution in $O\left( \sqrt{\|Q\|} / \sqrt{\epsilon} \right)$ iterations. This is the optimal rate for FOMs for solving this class of problems. The major computational burden in each iteration in these FOMs is the evaluation of the gradient of $f(y)$, which involves simply a matrix-vector product, and hence each iteration costs $O(N)$ time. The only other main operation in each iteration of Nesterov's algorithm applied to this problem is the projection onto the Euclidean ball, and this can be done in $O(n)$ time. Consequently, Nesterov's algorithm \cite{Nesterov_83} applied to the optimization problem in our convex reformulation \eqref{eqn:convex-trsSOC-classical} of the classical TRS runs in time $O\left( N \sqrt{\|Q\|} / \sqrt{\epsilon} \right)$.

Thus, taking into account the complexity of computing $\lambda_Q$ to build our convex reformulation \eqref{eqn:convex-trsSOC-classical} and using Nesterov's algorithm \cite{Nesterov_83}, we establish the following upper bound on the worst case number of elementary operations needed:
\begin{theorem}\label{thm:classicalTRScomplexity}
With probability $1-\delta$, a solution $\bar{y}$ to the classical TRS \eqref{eqn:trsSOC-classical} satisfying $h(\bar{y}) - h(y) \leq \epsilon$ for all $y$ in the unit ball can be found in time
\begin{equation}\label{eqn:convex-trsSOC-classical-runtime}
O\left( N \left( \frac{ \sqrt{\|Q\|} }{\sqrt{\epsilon}} \log\left(\frac{n}{\delta}\right) + \frac{\sqrt{\|Q\|}}{\sqrt{\epsilon}} \right) \right) = O\left( N \frac{\sqrt{ \|Q\|} }{\sqrt{\epsilon}} \log\left(\frac{n}{\delta}\right) \right)
\end{equation}
using randomized Lanczos method to compute $\lambda_Q$ and Nesterov's algorithm \cite{Nesterov_83}. 
\end{theorem}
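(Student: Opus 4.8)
The statement is a composition of three ingredients: the exact convex reformulation of the classical TRS (Theorem~\ref{thm:trsSOC-classical-tight-convex}), the randomized--Lanczos guarantee for computing $\lambda_Q$ recalled just above, and the iteration complexity of Nesterov's accelerated gradient descent \cite{Nesterov_83} for minimizing a smooth convex function over the Euclidean ball. Concretely, I would analyze the algorithm that (1) approximates $\lambda_Q$ by randomized Lanczos, obtaining a value $\tilde\lambda$ and a unit Ritz vector $v$; (2) builds the convex quadratic obtained by shifting $Q$ with a slightly underestimated eigenvalue $\mu$ and minimizes it over the unit ball via \cite{Nesterov_83}; and (3) post-processes the returned iterate into a feasible point whose $h$-value is within $\epsilon$ of $\Opt_h$. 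Steps~(1)--(2) are where the claimed running time comes from, and step~(3) is where the argument needs the most care.

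\textbf{Steps (1)--(2).} Run randomized Lanczos on $Q$ to target accuracy $\epsilon_1 := c\epsilon$: with probability $1-\delta$ this costs $O\left( N\sqrt{\|Q\|}\log(n/\delta)/\sqrt{\epsilon} \right)$ operations and returns a Ritz value $\tilde\lambda$ with $0 \le \tilde\lambda - \lambda_Q \le \epsilon_1$ (Ritz values overestimate $\lambda_{\min}$). Put $\mu := \tilde\lambda - \epsilon_1 \le \lambda_Q$ and $f_\mu(y) := y^\top(Q - \mu I_n)y + 2g^\top y + \mu$. Then $Q - \mu I_n \succeq 0$, so $f_\mu$ is convex with smoothness parameter $O(\|Q\|)$; moreover $f_\mu(y) - f(y) = (\lambda_Q - \mu)(\|y\|^2 - 1)$, so $|f_\mu - f| \le \epsilon_1$ and $f_\mu \le f$ on the unit ball, whence $\min_{\|y\|\le 1} f_\mu \le \Opt_h$ by Theorem~\ref{thm:trsSOC-classical-tight-convex}. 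Now apply Nesterov's algorithm \cite{Nesterov_83} to $\min_{\|y\|\le 1} f_\mu$; since the ball has diameter $2$, after $O\left( \sqrt{\|Q\|/\epsilon} \right)$ iterations --- each one gradient evaluation ($O(N)$) plus one projection onto the ball ($O(n)$) --- it returns a feasible $\bar y$ with $f_\mu(\bar y) \le \min_{\|y\|\le 1} f_\mu + c'\epsilon$ (and, using a gradient-norm-minimizing variant, a small gradient mapping at $\bar y$, at no change in the order of complexity). This stage costs $O\left( N\sqrt{\|Q\|}/\sqrt{\epsilon} \right)$, dominated by the Lanczos cost, so together they give the stated bound.

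\textbf{Step (3): the main obstacle.} If $\|\bar y\| = 1$ we are done, since then $h(\bar y) = f_\mu(\bar y) \le \Opt_h + c'\epsilon$. The subtlety --- the classical ``hard case'' --- is that a near-minimizer of $f_\mu$ may sit in the interior of the ball, where $h > f_\mu$ and the gap can far exceed $\epsilon$; one must recover a near-optimal \emph{point}, not merely the optimal value. The plan is to push $\bar y$ to the sphere along the Ritz direction: flip the sign of $v$ so that $g^\top v \le 0$, pick $s \ge 0$ with $\|\bar y + sv\| = 1$ (possible since $\|\bar y\| \le 1$, and $s \le 1+\sqrt{2}$), and output $\hat y := \bar y + sv$. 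Expanding and regrouping the cross term with the linear term yields the clean identity
\[ f_\mu(\hat y) - f_\mu(\bar y) = s\,\nabla f_\mu(\bar y)^\top v + s^2\, v^\top(Q - \mu I_n) v, \]
whose second summand equals $s^2(\tilde\lambda - \mu) = s^2\epsilon_1 = O(\epsilon)$ exactly, and whose first summand is $O(\epsilon)$ when $\bar y$ is interior because then the gradient mapping is $\nabla f_\mu(\bar y)$ itself, which the Nesterov variant has driven to $O(\epsilon)$. Since $\|\hat y\| = 1$ gives $h(\hat y) = f_\mu(\hat y)$, chaining these bounds yields $h(\hat y) \le \Opt_h + O(\epsilon)$, and choosing $c,c'$ small makes the right-hand side $\le \Opt_h + \epsilon$, i.e. $h(\hat y) - h(y) \le \epsilon$ for all $y$ in the unit ball. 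The delicate points are exactly here: verifying that $s$ is genuinely $O(1)$; that it suffices to work with an \emph{approximate} $\lambda_Q$ and an \emph{approximate} bottom eigenvector (in particular that the error enters only additively --- note the identity above sidesteps the Ritz residual $\|Qv - \tilde\lambda v\|$ thanks to the $g^\top v \le 0$ regrouping); that the boundary-near case $0<\|\bar y\|<1$ is also covered (via the same gradient-mapping smallness); and that the extra accuracy this forces on the Lanczos and Nesterov phases changes the running time only by constant factors. I expect this recovery step to be the bulk of the actual work; everything else is bookkeeping over results already in hand.
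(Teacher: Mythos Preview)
Your Steps~(1)--(2) match the paper exactly: the paper's ``proof'' is simply the running discussion in Section~\ref{sec:SOCComplexity} immediately preceding the theorem statement, which adds the randomized Lanczos cost for estimating $\lambda_Q$ to the Nesterov cost for minimizing the shifted convex quadratic over the ball, together with Remark~\ref{rem:approx-eigenvalues} and Appendix~\ref{app:approx-eigenvalues} handling the effect of an approximate eigenvalue. There is no further argument in the paper.

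Your Step~(3), however, goes beyond what the paper actually carries out. The paper does not include an explicit post-processing/rounding step; Appendix~\ref{app:approx-eigenvalues} controls only $f(\bar y^\eta)-f(y^*)$ and $\|y^\eta-y^*\|$, not $h(\bar y)-\Opt_h$, so the issue you flag---that a near-minimizer of the convex surrogate may lie strictly inside the ball where $h>f_\mu$---is a genuine subtlety the paper leaves implicit (presumably relying on the move-along-$d$ argument in the proof of Theorem~\ref{thm:trsSOC-tight-convex}, which however uses an \emph{exact} eigenvector). Your boundary push along the Ritz direction is a natural and correct way to complete the argument. One caution: the ``small gradient mapping'' you invoke is not delivered by the basic accelerated method of \cite{Nesterov_83}, which only gives $\|\nabla f_\mu(\bar y)\|=O(\sqrt{\|Q\|\epsilon})$; you must explicitly cite a variant (regularized restart, or an optimized gradient-norm method) that achieves $\|\nabla f_\mu(\bar y)\|=O(\epsilon)$ within the same $O(\sqrt{\|Q\|/\epsilon})$ iteration budget, or else rework the bound via $((Q-\mu I)v)^\top \bar y$ and control the Ritz residual directly. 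In short: same decomposition as the paper for the headline complexity, but you are (rightly) filling in a recovery step the paper treats as understood.
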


\begin{remark}\label{rem:TRScomplexity}
This discussion shows that the classical TRS decomposes into two special TRS problems: one without a linear term, i.e., $g = 0$, making it a pure minimum eigenvalue problem, and the other one with a convex quadratic objective function. This once again highlights the connection between the TRS and eigenvalue problems, and in fact demonstrates that, up to constant factors, the complexity of solving the classical TRS is no worse than solving a minimum eigenvalue problem because the complexity in Theorem~\ref{thm:classicalTRScomplexity} is essentially determined by the complexity of computing minimum eigenvalue of a matrix. 
Rendl and Wolkowicz \cite[Section 5]{Rendl_Wolkowicz_97} have  empirically observed this connection between complexity of solving classical TRS and computing the minimum eigenvalue; our analysis complements their study with a theoretical justification.
\epr
\end{remark}

\begin{remark}\label{rem:TRScomplexity2}
Using a deterministic algorithm to compute $\lambda_Q$ eliminates the probabilistic component in Theorem~\ref{thm:classicalTRScomplexity} at the expense of a slightly worse dependence on $\epsilon$ and $n$ in the iteration complexity. 

Unlike other methods \cite{FortinWolkowicz2004,More_Sorensen_83,Rendl_Wolkowicz_97}, our proposed method need not differentiate between the easy case and the hard case.
\epr
\end{remark}
 
\begin{remark}\label{rem:approx-eigenvalues}
In practice, we will not be able to form the objective $f(y)$ exactly, since  $\lambda_Q$ will be computed only approximately. Let us consider an estimate $\gamma \approx \lambda_Q$ and working with the objective $f_\gamma(y) = y^\top (Q - \gamma I_n) y + 2 g^\top y$. In Appendix~\ref{app:approx-eigenvalues}, we show that by using $f_\gamma(y)$ instead of $f(y)$, the error we incur is linearly dependent on the error of estimating $\lambda_Q$ with $\gamma$, which for our purposes is $O(\epsilon)$.
\epr
\end{remark}

\begin{remark}\label{rem:TRScompare}
Let us compare our bound \eqref{eqn:convex-trsSOC-classical-runtime} to the running time from \cite{HazanKoren2016}. The approach of \cite[Theorem 1]{HazanKoren2016} requires 
\[
O\left( N \frac{ \sqrt{\Gamma} \log\left( \Gamma / \epsilon \right)}{\sqrt{\epsilon}} \log\left(\frac{n}{\delta} \log\left(\frac{\Gamma}{\epsilon}\right)\right)  \right)
\]
elementary operations to obtain an $\epsilon$-accurate solution for \eqref{eqn:trsSOC-classical} with probability $1-\delta$, where $\Gamma = \max\left\{2( \|Q\| + \|g\|), 1\right\}$. 
By using the convex reformulation \eqref{eqn:convex-trsSOC-classical} as opposed to the method of \cite{HazanKoren2016}, we remove (at least) a factor of $\log\left( \Gamma / \epsilon\right)$ and the dependence on $\|g\|$. 

Our method is simpler to implement than the method of \cite{HazanKoren2016} as well because it decomposes the TRS into two well-studied problems as discussed in Remark~\ref{rem:TRScomplexity}. 
In contrast, since \cite{HazanKoren2016} relies on solving the dual SDP,  at the end of its iterations, it requires additional operations to obtain the primal solution from the dual one, and then a rounding procedure to find the solution in the original space. Also, because the approach of  \cite{HazanKoren2016} works in a lifted space and requires additional transformations at the end, it is not amenable to be used within the completely online convex optimization framework as described in \cite{Ho-NguyenKK2016RO}.  
\epr
\end{remark}

\section{Convexification of the Epigraph of TRS}
\label{sec:TRSConvexification}
In this section, we study the convex hull of the epigraph of TRS. 
In general, a tight convex relaxation for a nonconvex optimization problem does not necessarily imply that the epigraph of the convex relaxation is giving the exact convex hull of the epigraph of the nonconvex optimization problem. However, in the particular case of TRS with additional conic constraints, i.e., problem \eqref{eqn:trsSOC-conic}, under a slightly more stringent variant of Condition~\ref{cond:TRS-relaxation}, we will establish  that not only our convex relaxation given by \eqref{eqn:convex-trsSOC-conic} is tight but also its epigraph exactly characterizes the convex hull of the  epigraph of underlying TRS \eqref{eqn:trsSOC-conic} (see Corollary~\ref{cor:trs-convex-hull}).

By defining a new variable $x_{n+2}$ (where the variable $x_{n+1}$ is reserved for later homogenization), and moving the nonconvex function from the objective to the constraints, we can equivalently recast  \eqref{eqn:trsSOC-conic} as minimizing $x_{n+2}$ over its epigraph
\begin{equation}\label{eqn:trsSOC-conic2}
\Opt_h = \min_{y,x_{n+2}} \left\{ x_{n+2} : \begin{array}{rcl} \|y\| &\leq& 1\\ Ay - b &\in& \cK \\ h(y) = y^\top Q y + 2g^\top y &\leq& x_{n+2} \end{array} \right\}.
\end{equation}
Since the objective $x_{n+2}$ is linear, optimizing over the epigraph is equivalent to optimizing over its convex hull. 
We define the associated epigraph as 
\begin{equation}\label{eqn:X-set}
X := \left\{ x=[y;1;x_{n+2}] \in\R^{n+2}: \begin{array}{rcl} \|y\| &\leq& 1\\ Ay - b &\in& \cK \\ y^\top Q y + 2g^\top y &\leq& x_{n+2} \end{array} \right\}.
\end{equation}

Our convex hull characterizations are also SOC based. That is, as in Section~\ref{sec:TRSReformulation}, we focus mainly on the quadratic parts of the TRS \eqref{eqn:trsSOC-conic}, namely the nonconvex quadratic $y^\top Q y + 2g^\top y$ and the unit ball constraint $\|y\| \leq 1$ and provide the convexification of this set $X$ via a single new SOC constraint. 

Our approach is a refinement of the one from Burer and K{\i}l{\i}n\c{c}-Karzan~\cite{BKK14}. We first summarize the approach of \cite{BKK14} in Section~\ref{sec:BKKSummary} and then give our direct characterization in Section~\ref{sec:ourConvexHull}. As opposed to general SOCs and their cross-sections examined in Section~\ref{sec:BKKSummary},  we present a direct study of $\clconv(X)$ in Section~\ref{sec:ourConvexHull} that utilizes the fact that our domain in the context of TRS is a subset of an ellipsoid. Consequently, our analysis in Section~\ref{sec:ourConvexHull}  eliminates the need to verify several conditions from \cite{BKK14} completely and allows possibilities to handle additional conic constraints under appropriate assumptions. Finally, in Section~\ref{sec:TRS-additional-constraints}, we extend our analysis to cover additional hollow constraints in the domain.

\subsection{Summary and Discussion of Results from \cite{BKK14}}\label{sec:BKKSummary}
We start with a number of relevant definitions and conditions and then present the main result of \cite{BKK14}.

A cone $\cF^+\subseteq \R^k$ is said to be {\em second-order-cone representable} (or {\em SOCr}) if there exists a matrix $0\neq R \in \R^{k \times (k-1)}$ and a vector $r \in \R^k$ such that the nonzero columns of $R$ are linearly independent, $r \not\in \Range(R)$, and
\begin{equation} \label{equ:def:F+}
  \cF^+ = \left\{ x :~ \|R^\top  x \| \le r^\top  x \right\}.
\end{equation}

Given an SOCr cone $\cF^+$, the cone $\cF^-:=-\cF^+$ is also SOCr. Based on $\cF^+$ from \eqref{equ:def:F+}, we define $W := RR^\top  - rr^\top $ and consider the union $\cF^+ \cup (\cF^-)= \cF^+ \cup (-\cF^+)=: \cF$. Note that $\cF$ corresponds to a nonconvex cone defined by the homogeneous quadratic inequality $x^\top  W x \le 0$:
\[%\begin{equation} \label{equ:def:F}
 \cF :=  \cF^+ \cup (\cF^-) = \left\{ x :~ \|R^\top  x \|^2 \le (r^\top  x)^2 \right\} = \left\{ x :~ x^\top  W x \le 0 \right\}.
\]%\end{equation}
We define $\apex(\cF^+)=\apex(\cF^-)=\apex(\cF)= \{x : R^\top x = 0,\ r^\top x = 0\}$. Any matrix $W$ of the form $W = R R^\top - r r^\top$ as described above has exactly one negative eigenvalue,  and given $\cF$, we can recover $\cF^+$ by performing an eigenvalue decomposition of $W$, see \cite[Propositions 1 and 3]{BKK14}.

Given matrices $W_0, W_1 \in \bbR^{k \times k}$ and a vector $h \in \bbR^k$, we let $W_t = (1-t) W_0 + t W_1$ for $t \in [0,1]$, and define  the sets
\begin{align*}
\cF_0 &:= \{x : x^\top W_0 x \leq 0\}, \quad \cF_1 := \{x : x^\top W_1 x \leq 0\}, \quad \cF_t = \{x : x^\top W_t x \leq 0\},\\
H^0 &:= \{x : h^\top x = 0\}, \quad H^1 := \{x : h^\top x = 1\}.
\end{align*}

Burer and K{\i}l{\i}n\c{c}-Karzan \cite{BKK14} provide a general scheme to build an SOC-based convex relaxation of $\cF_0^+ \cap \cF_1$ and establish that under appropriate conditions their relaxations are exactly  describing $\ccnh(\cF_0^+ \cap \cF_1)$ and $\clconv(\cF_0^+ \cap \cF_1 \cap H^1)$. Their analysis relies on the following conditions:
  
\begin{condition} \label{cond:one_neg_eval}
$W_0$ has at least one positive eigenvalue and exactly one negative eigenvalue.
\end{condition}

\begin{condition} \label{cond:interior}
There exists $\bar x$ such that $\bar x^\top  W_0 \bar x < 0$ and $\bar x^\top  W_1 \bar x < 0$. 
\end{condition}

\begin{condition} \label{cond:A0_apex}
Either (i) $W_0$ is nonsingular, (ii) $W_0$ is singular and $W_1$ is positive definite on $\Null(W_0)$, or (iii) $W_0$ is singular and $W_1$ is negative definite on $\Null(W_0)$.
\end{condition}

Conditions~\ref{cond:one_neg_eval}--\ref{cond:A0_apex} ensure the existence of a maximal $s \in [0,1]$ such that $W_t$ has a single negative eigenvalue for all $t \in [0,s]$, $W_t$ is invertible for all $t \in(0,s)$, and $W_s$ is singular---that is, $\Null(W_s)$ is nontrivial whenever $s<1$.
Then, for all $W_t$ with $t \in [0,s]$, the set $\cF_t^+$ is well-defined by computing an eigenvalue decomposition of $W_t$. We also need the following conditions on the value of $s$:

\begin{condition} \label{cond:As_null}
When $s < 1$, $\apex(\cF_s^+) \cap \intt(\cF_1) \ne \emptyset$. 
\end{condition}

\begin{condition} \label{cond:hyperplane}
When $s < 1$, $\apex(\cF_s^+)\cap \intt(\cF_1) \cap H^0 \ne \emptyset$
or $\cF_0^+ \cap \cF_s^+ \cap H^0 \subseteq \cF_1$.
\end{condition}

Conditions~\ref{cond:one_neg_eval}--\ref{cond:hyperplane} are all that is needed to state the main result of \cite{BKK14}. Here, we state \cite[Theorem 1]{BKK14} for completeness.
\begin{theorem}[{\cite[Theorem 1]{BKK14}}] \label{thm:BKK-convexify}
Suppose Conditions~\ref{cond:one_neg_eval}--\ref{cond:A0_apex} are satisfied, and let $s$ be the maximal $s \in [0,1]$ such that $W_t:=(1-t)W_0+t W_1$ has a single negative eigenvalue for all $t \in [0,s]$. Then $\ccnh(\cF_0^+ \cap \cF_1) \subseteq \cF_0^+ \cap \cF_s^+$, and equality holds under Condition~\ref{cond:As_null}. Moreover,  Conditions~\ref{cond:one_neg_eval}--\ref{cond:hyperplane} imply
$\cF_0^+ \cap \cF_s^+ \cap H^1 = \clconv(\cF_0^+ \cap \cF_1 \cap H^1).$
\end{theorem}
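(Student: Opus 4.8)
The plan is to prove the three assertions in order. For the conic inclusion $\ccnh(\cF_0^+\cap\cF_1)\subseteq\cF_0^+\cap\cF_s^+$ I argue pointwise: if $x^\top W_0x\le0$ and $x^\top W_1x\le0$, then $x^\top W_tx=(1-t)x^\top W_0x+t\,x^\top W_1x\le0$ for every $t\in[0,1]$, so $x$ lies in the nonconvex cone $\cF_t=\cF_t^+\cup\cF_t^-$ for all $t\in[0,s]$; the work is to check that $x$ never switches nappes. Writing $W_t=R_tR_t^\top-r_tr_t^\top$ with $R_t,r_t$ chosen to depend continuously on $t$ on $[0,s]$ — legitimate because the negative eigenvalue of $W_t$ is simple there — the scalar $t\mapsto r_t^\top x$ is continuous, satisfies $\|R_t^\top x\|\le|r_t^\top x|$, and can vanish only where $R_t^\top x=0$ as well, i.e.\ where $x\in\apex(\cF_t^+)=\Null(W_t)$; for $t\in(0,s)$ this is $\{0\}$ since $W_t$ is invertible there, and at $t=s$ it forces $x\in\apex(\cF_s^+)\subseteq\cF_s^+$. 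Hence $r_t^\top x$ keeps its sign, and since $x\in\cF_0^+$ gives $r_0^\top x\ge0$, we get $x\in\cF_t^+$ throughout, in particular $x\in\cF_s^+$ (the degenerate subcase $x\in\Null(W_0)$ being absorbed by Condition~\ref{cond:A0_apex}). So $\cF_0^+\cap\cF_1$ sits inside the closed convex cone $\cF_0^+\cap\cF_s^+$, hence so does its closed conic hull.

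The reverse inclusion under Condition~\ref{cond:As_null} is the crux. If $s=1$, then $\cF_1^+$ is well defined (as $W_1$ has a single negative eigenvalue) and $\cF_0^+\cap\cF_s^+=\cF_0^+\cap\cF_1^+\subseteq\cF_0^+\cap\cF_1$, so assume first $0<s<1$ and take $z\ne0$ with $W_sz=0$ and $z^\top W_1z<0$ (Condition~\ref{cond:As_null}); then $(1-s)z^\top W_0z+s\,z^\top W_1z=z^\top W_sz=0$ forces $z^\top W_0z>0$. Fix $\bar x\in\cF_0^+\cap\cF_s^+$. If $\bar x^\top W_1\bar x\le0$ then $\bar x\in\cF_0^+\cap\cF_1$ already; otherwise $\bar x^\top W_1\bar x>0$ forces $\bar x^\top W_0\bar x<0$ by the same aggregation. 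Along the line $\bar x+\mu z$ the $W_s$-value is the constant $\bar x^\top W_s\bar x\le0$ (since $W_sz=0$), while $\mu\mapsto(\bar x+\mu z)^\top W_0(\bar x+\mu z)$ is an upward parabola negative at $\mu=0$, so it has roots $\nu_-<0<\nu_+$; at $\mu=\nu_\pm$ the $W_0$-value is $0$ and the $W_s$-value is $\le0$, hence the $W_1$-value is $\le0$, so $\bar x+\nu_\pm z\in\cF_1$. The open segment joining these two points has strictly negative $W_0$-value and contains $\bar x$, so it lies in the interior of a single nappe, namely $\cF_0^+$; by closedness the endpoints belong to $\cF_0^+$, hence to $\cF_0^+\cap\cF_1$, and $\bar x$ is their convex combination. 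Thus $\cF_0^+\cap\cF_s^+\subseteq\conv(\cF_0^+\cap\cF_1)$, which together with the first part and the closedness of $\cF_0^+\cap\cF_s^+$ gives equality. The case $s=0$ is similar, using that $\Null(W_0)$ lies in the lineality space of $\cF_0^+$ and pushing $\bar x$ along $\pm z$ until the downward parabola $\mu\mapsto(\bar x+\mu z)^\top W_1(\bar x+\mu z)$ is nonpositive at both ends.

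For the affine identity $\cF_0^+\cap\cF_s^+\cap H^1=\clconv(\cF_0^+\cap\cF_1\cap H^1)$, the inclusion ``$\supseteq$'' is immediate: the left side is closed, convex, and contains $\cF_0^+\cap\cF_1\cap H^1$ by the first part. For ``$\subseteq$'', take $\bar x$ in the left side. Under the first alternative of Condition~\ref{cond:hyperplane} we may choose the null vector $z$ above to also satisfy $h^\top z=0$; rerunning the split of the previous paragraph then keeps $\bar x+\nu_\pm z$ in $H^1$, so $\bar x\in\conv(\cF_0^+\cap\cF_1\cap H^1)$. Under the second alternative, $\cF_0^+\cap\cF_s^+\cap H^0\subseteq\cF_1$, which means the recession cone of $\cF_0^+\cap\cF_s^+\cap H^1$ already lies in $\cF_0^+\cap\cF_1\cap H^0$; one then writes $\bar x$ as a bounded part plus a recession direction, applies the reverse conic inclusion to the bounded part, and passes to the limit to land in $\clconv(\cF_0^+\cap\cF_1\cap H^1)$.

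I expect the reverse conic inclusion to be where real thought is needed: seeing that the aggregation identity $x^\top W_sx=(1-s)x^\top W_0x+s\,x^\top W_1x$, combined with a null vector of $W_s$ supplied by Condition~\ref{cond:As_null}, is exactly what lets one spread a point of $\cF_0^+\cap\cF_s^+$ into two genuinely feasible points with $\bar x$ on the segment between them. The remaining difficulties are technical: continuously tracking the rank-one part of $W_t$ so the nappe cannot flip in the easy inclusion (this is where the SOCr structure and the simplicity of the negative eigenvalue of $W_t$ on $[0,s]$, guaranteed by Conditions~\ref{cond:one_neg_eval}--\ref{cond:A0_apex}, are used), disposing of the degenerate endpoints $s\in\{0,1\}$ and $x\in\Null(W_0)$, and the recession-cone bookkeeping that upgrades $\conv$ to $\clconv$ in the affine identity — which is exactly why Condition~\ref{cond:hyperplane} is phrased as a disjunction.
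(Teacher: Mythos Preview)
The paper does not prove this theorem; it is quoted verbatim from \cite{BKK14} and introduced with the sentence ``Here, we state \cite[Theorem 1]{BKK14} for completeness.'' There is therefore no proof in the present paper to compare your proposal against.

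That said, your outline follows the standard two-step aggregation argument one expects for this type of result: (i) the easy inclusion via $x^\top W_t x = (1-t)x^\top W_0 x + t\,x^\top W_1 x \le 0$ together with a continuity/connectedness argument to keep $x$ in the correct nappe $\cF_t^+$ as $t$ runs from $0$ to $s$; and (ii) the reverse inclusion by taking $z\in\Null(W_s)\cap\intt(\cF_1)$ from Condition~\ref{cond:As_null}, moving along the line $\bar x+\mu z$ until the $W_0$-quadratic vanishes, and observing that the $W_s$-value is constant along this line so the $W_1$-value is nonpositive at the two endpoints. This is indeed the mechanism that the present paper re-uses in its own direct proof of Theorem~\ref{thm:trs-convexify} (compare the roles of $d$ there and $z$ in your argument), so your intuition is aligned with how the authors themselves adapt \cite{BKK14} to the TRS setting.

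A few places in your sketch would need tightening in a full proof: the continuous selection of $(R_t,r_t)$ on $[0,s]$ requires some care at $t=0$ when $W_0$ is singular (this is exactly where Condition~\ref{cond:A0_apex} enters), and your treatment of the second alternative in Condition~\ref{cond:hyperplane} via a ``bounded part plus recession direction'' decomposition is vague as stated --- the set $\cF_0^+\cap\cF_s^+\cap H^1$ need not be a polyhedron, so you would need to invoke a Minkowski--Weyl--type decomposition for closed convex sets and argue carefully that the limiting operation lands in $\clconv$ rather than merely $\conv$. But these are refinements, not structural gaps.
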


These convexification results were also applied to the classical TRS \eqref{eqn:trsSOC-classical} in \cite{BKK14}. In particular, it is shown in \cite[Section 7.2]{BKK14} that the classical TRS \eqref{eqn:trsSOC-classical} can be reformulated in the form of 
\begin{equation}\label{eqn:TRS-SOC-BKK}
\Opt_h = \min_{\tilde{y},x_{n+2}}\left\{ -x_{n+2}^2 : \begin{array}{rcl} \|\tilde{y}\| &\leq& 1\\ \tilde{y}^\top \tilde{Q} \tilde{y} + 2\tilde{g}^\top \tilde{y} &\leq& -x_{n+2}^2 \end{array} \right\},
\end{equation}
where $\tilde{g}=[g;0]$ and $\tilde{Q}:=\begin{bmatrix} Q & 0\\ 0 &\lambda_Q\end{bmatrix}$ is defined to ensure $\lambda_{\min}(\tilde{Q})=\lambda_Q$ and the multiplicity of $\lambda_Q$ in $\tilde{Q}$ is at least two. Note that here $\tilde{y} = [y;\tilde{y}_{n+1}]\in\R^{n+1}$.
Then \cite{BKK14} suggests to solve \eqref{eqn:TRS-SOC-BKK} in two stages after the nonconvex domain in \eqref{eqn:TRS-SOC-BKK} is replaced by its convex hull. 
Specifically, \cite{BKK14} defines a new variable $\tilde{x}=[\tilde{y}; x_{n+1}; x_{n+2}]$ and the matrices
\begin{equation}\label{eqn:BKK-matrices}
\tilde{W}_0 = \begin{bmatrix} I_{n+1} & 0 & 0\\ 0^\top & -1 & 0\\ 0 & 0 & 0 \end{bmatrix}, \quad \tilde{W}_1 = \begin{bmatrix} \tilde{Q} & \tilde{g} & 0\\ \tilde{g}^\top & 0 & 0\\ 0 & 0 & 1 \end{bmatrix}, 
\end{equation}
which then leads to 
\begin{align}\label{eqn:BKK-nonconvex-set}
Y&:=\left\{ [\tilde{y};1;x_{n+2}]\in\R^{n+3} :~ \begin{array}{rcl} \|\tilde{y}\| \!&\!\leq\!& 1\\ \tilde{y}^\top \tilde{Q} \tilde{y} + 2\tilde{g}^\top \tilde{y} \!&\!\leq\!& -x_{n+2}^2 \end{array} \right\} \notag\\
&= \left\{ \tilde{x} = [\tilde{y};x_{n+1};x_{n+2}] \in\R^{n+3} :~ \begin{array}{rcl} \tilde{x}^\top \tilde{W}_0 \tilde{x} \!&\!\leq\!&\! 0\\ \tilde{x}^\top \tilde{W}_1 \tilde{x} \!&\!\leq\!&\! 0\\ x_{n+1} \!&\!=\!&\! 1 \end{array} \right\}\notag\\
&= \cF_0^+ \cap \cF_1 \cap \left\{ \tilde{x}\in\R^{n+3} :~ x_{n+1} = 1 \right\},
\end{align}
where $\cF_0 = \{\tilde{x} : \tilde{x}^\top \tilde{W}_0 \tilde{x} \leq 0\}$ and $\cF_1 = \{\tilde{x} : \tilde{x}^\top \tilde{W}_1 \tilde{x} \leq 0\}$. 
Then the conditions of Theorem~\ref{thm:BKK-convexify} are satisfied, and we deduce that there exists some $s \in (0,1)$ ensuring
\begin{equation}\label{eqn:BKK-TRS-hull}
\clconv(\cF_0^+ \cap \cF_1 \cap \left\{ \tilde{x} : x_{n+1} = 1 \right\}) = \cF_0^+ \cap \cF_s^+ \cap \left\{ \tilde{x} : x_{n+1} = 1 \right\}.
\end{equation}
While the precise value of $s$ is not given in \cite{BKK14}, one can show that in fact $s = \frac{1}{1-\lambda_Q}$. 
We present the verification of conditions of Theorem~\ref{thm:BKK-convexify} for matrices in \eqref{eqn:BKK-matrices} and the derivation for this $s$ value in Appendix~\ref{app:s-computation}.

\begin{remark}\label{rem:BKK-nonpositive}
The reformulation \eqref{eqn:TRS-SOC-BKK} of classical TRS \eqref{eqn:trsSOC-classical} implicitly requires that $\Opt_h \leq 0$ because of the constraint $\tilde{y}^\top \tilde{Q} \tilde{y} + 2\tilde{g}^\top \tilde{y} \leq -x_{n+2}^2 \leq 0$. For the classical TRS \eqref{eqn:trsSOC-classical} with no  additional constraints, this is not an additional limitation because $\tilde{y}=0$ will always be a feasible solution with objective value $0$ and thus the optimum solution will have a nonpositive objective value. However, this becomes a limitation when we want to extend such arguments for the TRS \eqref{eqn:trsSOC-conic} with additional conic constraints $Ay - b \in \cK$ because $\Opt_h$ may no longer be nonpositive.
\epr
\end{remark}

\subsection{Direct Convexification of the Epigraph of TRS}\label{sec:ourConvexHull}

Due to Remark~\ref{rem:BKK-nonpositive}, we instead choose to study the epigraph of TRS \eqref{eqn:trsSOC-conic} as in \eqref{eqn:X-set}, which allows for positive objective values in \eqref{eqn:trsSOC-conic2} and avoids the additional lifting of the problem $Q \to \tilde{Q}$. 
To this end, we define the matrices
\begin{equation}\label{eqn:trs-matrices-defn}
W_0 = \begin{bmatrix} I_n & 0 & 0\\ 0^\top & -1 & 0\\ 0 & 0 & 0 \end{bmatrix}, \quad W_1 = \begin{bmatrix} Q & g & 0\\ g^\top & 0 & -\frac{1}{2}\\ 0 & -\frac{1}{2} & 0 \end{bmatrix},
\end{equation}
and the corresponding sets
\begin{align}\label{eqn:trs-cones-defn}
\cF_0^+ &= \left\{ x=[y;x_{n+1};x_{n+2}]\in\R^{n+2} :~ \|y\|^2 \leq x_{n+1}^2, \ x_{n+1} \geq 0 \right\} \notag\\
&= \left\{ x\in\R^{n+2} :~ x^\top W_0 x \leq 0, \ x_{n+1} \geq 0 \right\}, \notag\\
\cF_1 &= \left\{ x\in\R^{n+2} :~ y^\top Q y + 2g^\top y\, x_{n+1} \leq x_{n+1} x_{n+2} \right\} = \left\{ x :~ x^\top W_1 x \leq 0\right\},\\
\widehat{\cK} &= \left\{ x\in\R^{n+2} : Ay - b x_{n+1} \in \cK \right\},\notag\\
H^1 &= \left\{ x\in\R^{n+2} :~ x_{n+1} =1 \right\}.\notag
\end{align}
Note that $\lambda_Q<0$, and thus $\cF_1$ is not convex.  
With these definitions, the epigraph $X$ from \eqref{eqn:X-set} can be written as
\[
X = \cF_0^+ \cap \cF_1 \cap \widehat{\cK} \cap H^1.
\]

It is mentioned in \cite{BKK14} that the matrices \eqref{eqn:trs-matrices-defn} do not satisfy the necessary conditions to apply Theorem~\ref{thm:BKK-convexify} directly. In particular, Condition~\ref{cond:A0_apex} is violated for the choice of matrices \eqref{eqn:trs-matrices-defn}. As a result, \cite[Section 7.2]{BKK14} reformulates the  classical TRS with matrices \eqref{eqn:BKK-matrices} instead.
In contrast, we next show that in the special case of the classical TRS,  via a direct analysis, finding the convex hull through linear aggregation of constraints will still carry through for the matrices in \eqref{eqn:trs-matrices-defn}. This then indicates that while Condition~\ref{cond:A0_apex} is sufficient, it is not necessary to obtain the convex hull result. In fact, we show that the value of $s = \frac{1}{1-\lambda_Q}$ that works for the matrices \eqref{eqn:BKK-matrices} will also work for our matrices \eqref{eqn:trs-matrices-defn}.  
More precisely, for $s = \frac{1}{1-\lambda_Q}$, we define
\begin{equation}\label{eqn:trs-F_s-defn}
\cF_s = \left\{ x : x^\top W_s x \leq 0 \right\} = \left\{ x : y^\top (Q - \lambda_Q I_n) y + 2g^\top y x_{n+1} + \lambda_Q x_{n+1}^2 \leq x_{n+1} x_{n+2} \right\},
\end{equation}
and prove that $\clconv(X)=\conv(X)=\conv(\cF_0^+ \cap \cF_1 \cap \widehat{\cK}\cap H^1) = \cF_0^+ \cap \cF_s \cap \widehat{\cK}\cap H^1$ directly under the following condition that handles additional conic constraints.
\begin{condition}\label{cond:TRS-convexify}
There exists a vector $d \neq 0$ such that $Qd = \lambda_Q d$,  $ Ad \in \cK$, and $ -Ad \in \cK$.
\end{condition}
Note that when $\cK$ is pointed and $A$ is full rank, Condition~\ref{cond:TRS-convexify} assumes $Ad=0$.

\begin{remark}\label{rem:cond:TRS-convexify}
Condition~\ref{cond:TRS-convexify} implies  Condition~\ref{cond:TRS-relaxation}. To see this, suppose $d \neq 0$ satisfies Condition~\ref{cond:TRS-convexify}. Then if $g^\top d \leq 0$, $d$ satisfies Condition~\ref{cond:TRS-relaxation} also. Otherwise, $-d$ will satisfy Condition~\ref{cond:TRS-relaxation}. We demonstrate that Condition~\ref{cond:TRS-relaxation} does not imply Condition~\ref{cond:TRS-convexify}  in Example~\ref{ex:condNecessityConvexHull}. 

Furthermore, Condition~\ref{cond:TRS-convexify} holds whenever Condition~\ref{cond:JeyakumarLi} of \cite{JeyakumarLi2013} is satisfied  because Condition~\ref{cond:JeyakumarLi} implies that there exists $d$ such that $Qd = \lambda_Q d$ and $Ad = 0$ and since $\cK$ is a closed convex cone, $\pm Ad = 0 \in \cK$ as well. 
\epr
\end{remark}

One of the ingredients of our convex hull result is given in the next lemma.
\begin{lemma}\label{lem:trs-conic-convexify1}
Let $\cF_s$ be defined as in \eqref{eqn:trs-F_s-defn}. Then the cone $\cF_s \cap \left\{ x : x_{n+1} > 0 \right\}$ is convex, and the set $\cF_s \cap H^1$ where $H^1$ is as defined in \eqref{eqn:trs-cones-defn} is SOC representable.
\end{lemma}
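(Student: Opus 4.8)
The plan is to read off the defining inequality of $\cF_s$ in \eqref{eqn:trs-F_s-defn} as (a positive multiple of) the form $x^\top W_s x$, and then to recognize that, after clearing the factor $1-\lambda_Q>0$, this inequality is in \emph{perspective} form. Write $P := Q - \lambda_Q I_n$, which is positive semidefinite precisely because $\lambda_Q=\lambda_{\min}(Q)$. Then by \eqref{eqn:trs-F_s-defn},
\[
\cF_s = \left\{ x=[y;x_{n+1};x_{n+2}] :~ y^\top P y + 2 g^\top y\, x_{n+1} + \lambda_Q x_{n+1}^2 \leq x_{n+1} x_{n+2} \right\}.
\]

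First I would establish convexity of $\cF_s \cap \{x : x_{n+1}>0\}$. On the open halfspace $x_{n+1}>0$ I divide the defining inequality by $x_{n+1}$ and rewrite it as $x_{n+2} \geq \phi(y,x_{n+1})$, where $\phi(y,t) := \frac{y^\top P y}{t} + 2 g^\top y + \lambda_Q t$. Thus $\cF_s \cap \{x_{n+1}>0\}$ is exactly the epigraph, in the coordinate $x_{n+2}$, of $\phi$ over the convex domain $\{t>0\}$. The key observation is that $(y,t)\mapsto \frac{y^\top P y}{t}$ is the perspective of the convex quadratic $y\mapsto y^\top P y$ (convex since $P\succeq 0$), hence jointly convex on $\R^n\times\{t>0\}$; adding the affine term $2g^\top y + \lambda_Q t$ preserves convexity, so $\phi$ is jointly convex and its epigraph $\cF_s\cap\{x_{n+1}>0\}$ is convex. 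Restricting to the \emph{open} halfspace is what lets me invoke perspective convexity without worrying about behavior at $x_{n+1}=0$, which is the only delicate point in this step.

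Next I would show $\cF_s \cap H^1$ is SOC representable. Setting $x_{n+1}=1$ in \eqref{eqn:trs-F_s-defn} gives
\[
\cF_s \cap H^1 = \left\{ [y;1;x_{n+2}] :~ y^\top P y + 2g^\top y + \lambda_Q \leq x_{n+2} \right\},
\]
a convex quadratic constraint in $(y,x_{n+2})$ since $P\succeq 0$. Factoring $P = LL^\top$ (Cholesky, or via an eigendecomposition) gives $y^\top P y = \|L^\top y\|^2$, so the constraint becomes $\|L^\top y\|^2 \leq x_{n+2} - 2g^\top y - \lambda_Q$. Using the standard hyperbolic-to-SOC identity ($\|z\|^2\le\alpha \iff \|[2z;\alpha-1]\|\le\alpha+1$), this is equivalent to the single second-order-cone constraint $\bigl\| [\,2L^\top y\,;\, x_{n+2}-2g^\top y-\lambda_Q-1\,] \bigr\| \leq x_{n+2}-2g^\top y-\lambda_Q+1$, which is of the form \eqref{equ:def:F+} (after an affine change of coordinates), establishing SOC representability.

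I do not expect a serious obstacle here: the two points requiring care are (i) checking that the division by $x_{n+1}$ really produces the same set as $\cF_s\cap\{x_{n+1}>0\}$ — immediate since $x_{n+1}>0$ — and (ii) applying perspective convexity only on the open halfspace. Everything else is routine, and the whole argument should fit in well under half a page.
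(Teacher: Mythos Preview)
Your proof is correct. The approach is essentially the same as the paper's, with a minor difference in presentation: the paper keeps the inequality in product form $y^\top P y \leq x_{n+1}\,(x_{n+2}-2g^\top y - \lambda_Q x_{n+1})$, observes that $P\succeq 0$ and $x_{n+1}>0$ force the second factor to be nonnegative, and then reads off convexity and SOC representability directly from the rotated second-order cone (hyperbolic) form $\|L^\top y\|^2 \le uv$, $u,v\ge 0$. You instead divide through by $x_{n+1}$ and invoke perspective convexity to get the epigraph of a jointly convex function. These are two standard ways of packaging the same observation; your argument is perhaps slightly more explicit about \emph{why} the set is convex, while the paper's version gets both convexity and SOC representability of the slice in one stroke from the rotated-cone form.
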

\begin{proof}
Let $x = [y;x_{n+1};x_{n+2}]\in \R^{n+2}$. Note that by definition, we have
\begin{align*}
&\cF_s \cap \{ x : x_{n+1} > 0 \} \\ 
& =  \left\{ x:~ y^\top (Q - \lambda_Q I_n) y + 2 g^\top y x_{n+1} + \lambda_Q x_{n+1}^2 \leq x_{n+1} x_{n+2}, ~x_{n+1} > 0  \right\} \\
& =  \left\{ x:~ y^\top (Q - \lambda_Q I_n) y \leq x_{n+1} (x_{n+2} - 2 g^\top y - \lambda_Q x_{n+1}), ~x_{n+1} > 0 \right\} \\
& =  \left\{ x:~ \begin{array}{l} y^\top (Q - \lambda_Q I_n) y \leq x_{n+1} (x_{n+2} - 2 g^\top y - \lambda_Q x_{n+1}),\\ x_{n+1} > 0, ~~x_{n+2} - 2 g^\top y - \lambda_Q x_{n+1} \geq 0 \end{array}\right\}, 
\end{align*}
where the last equation follows because $Q - \lambda_Q I_n \succeq 0$, we have $y^\top (Q - \lambda_Q I_n) y \geq 0$ for all $y$ and then $x_{n+1} > 0$ implies $x_{n+2} - 2 g^\top y - \lambda_Q x_{n+1} \geq 0$. As a result, $x_{n+1} + x_{n+2} - 2 g^\top y - \lambda_Q x_{n+1}\geq 0$ holds for all $x\in \cF_s\cap  \{ x : x_{n+1} > 0 \}$. 
In addition, from these derivations, we immediately deduce that the set $\cF_s\cap \{ x : x_{n+1} =1 \}$ is an SOC representable set. 
\end{proof}

\begin{theorem}\label{thm:trs-convexify}
Let $\cF_0^+, \cF_1,H^1,\widehat{\cK},\cF_s$ be defined as in \eqref{eqn:trs-cones-defn} and \eqref{eqn:trs-F_s-defn}. Assume that $\lambda_Q < 0$ and Condition~\ref{cond:TRS-convexify} holds. Then
\[
\clconv(\cF_0^+ \cap \cF_1 \cap \widehat{\cK} \cap H^1) = \cF_0^+ \cap \cF_s \cap \widehat{\cK} \cap H^1.
\]
\end{theorem}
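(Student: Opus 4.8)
The plan is to prove the two inclusions separately: the containment $\clconv(\cF_0^+ \cap \cF_1 \cap \widehat{\cK} \cap H^1) \subseteq \cF_0^+ \cap \cF_s \cap \widehat{\cK} \cap H^1$ by simple constraint aggregation, and the reverse containment by producing, for each point of the right-hand side, an explicit convex combination of two points of $X:=\cF_0^+ \cap \cF_1 \cap \widehat{\cK} \cap H^1$.

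First I would record that the right-hand side is closed and convex: on $H^1$ the set $\cF_s\cap H^1$ is SOC representable by Lemma~\ref{lem:trs-conic-convexify1}, $\cF_0^+\cap H^1$ is the unit Euclidean ball in the $y$-coordinates, and $\widehat{\cK}\cap H^1$ is an affine preimage of $\cK$; writing out \eqref{eqn:trs-cones-defn}--\eqref{eqn:trs-F_s-defn} at $x_{n+1}=1$ identifies their intersection with $\{[y;1;x_{n+2}]:\|y\|\le1,\ Ay-b\in\cK,\ f(y)\le x_{n+2}\}$, where $f$ is the convex quadratic of \eqref{eqn:f-function}. For the easy inclusion, any $x\in\cF_0^+\cap\cF_1\cap\widehat{\cK}\cap H^1$ has $x^\top W_0x\le0$ and $x^\top W_1x\le0$, hence $x^\top W_sx=(1-s)x^\top W_0x+s\,x^\top W_1x\le0$ since $s=\tfrac1{1-\lambda_Q}\in(0,1)$; as the target set is closed and convex, passing to $\clconv$ preserves the containment. (This also explains why $s=\tfrac1{1-\lambda_Q}$ is the natural weight: it is precisely the aggregation coefficient at which $\cF_s\cap H^1$ becomes convex, via Lemma~\ref{lem:trs-conic-convexify1}.)

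The substance is the reverse inclusion. I would fix $[y;1;x_{n+2}]$ with $\|y\|\le1$, $Ay-b\in\cK$, $f(y)\le x_{n+2}$, and show it lies in $\conv(X)$. Since on $H^1$ the constraint defining $\cF_1$ only bounds $x_{n+2}$ from below, $X$ and hence $\conv(X)$ are invariant under increasing $x_{n+2}$, so it suffices to treat $x_{n+2}=f(y)$. If $\|y\|=1$, then $f(y)=h(y)$ by \eqref{eqn:f-function} and $[y;1;f(y)]\in X$ outright. If $\|y\|<1$, take the eigenvector $d\neq0$ from Condition~\ref{cond:TRS-convexify} ($Qd=\lambda_Qd$ and $\pm Ad\in\cK$); the scalar map $\tau\mapsto\|y+\tau d\|^2$ is a strictly convex parabola equal to $\|y\|^2<1$ at $\tau=0$, so it has a unique positive root $\tau_+$ and a unique negative root $-\sigma$ with $\sigma>0$. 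Put $y^+:=y+\tau_+d$ and $y^-:=y-\sigma d$, both of norm exactly $1$; feasibility $Ay^\pm-b\in\cK$ follows from $Ay-b\in\cK$, $\tau_+,\sigma>0$, $\pm Ad\in\cK$ and convexity of the cone, so $[y^\pm;1;h(y^\pm)]\in X$. Using $Qd=\lambda_Qd$ one computes, for any $\tau$ with $\|y+\tau d\|=1$,
\[
h(y+\tau d)=h(y)+\lambda_Q\bigl(2\tau\, y^\top d+\tau^2\|d\|^2\bigr)+2\tau\, g^\top d = f(y)+2\tau\, g^\top d,
\]
using $2\tau\, y^\top d+\tau^2\|d\|^2=1-\|y\|^2$ on the sphere; hence $h(y^+)=f(y)+2\tau_+g^\top d$ and $h(y^-)=f(y)-2\sigma g^\top d$. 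The convex combination with weights $\lambda=\sigma/(\tau_++\sigma)$ and $1-\lambda=\tau_+/(\tau_++\sigma)$ annihilates the $g^\top d$ contribution both in the $y$-block (since $\lambda\tau_+-(1-\lambda)\sigma=0$) and in the last coordinate, giving
\[
\lambda\,[y^+;1;h(y^+)]+(1-\lambda)\,[y^-;1;h(y^-)]=[y;1;f(y)]\in\conv(X).
\]
Combining the two inclusions yields $\conv(X)=\cF_0^+\cap\cF_s\cap\widehat{\cK}\cap H^1=\clconv(X)$.

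I do not expect a deep obstacle; the points needing care are: (i) the decomposition is an exact finite convex combination, so the statement is really about $\conv(X)$ and no closure is lost; (ii) \emph{both} $+d$ and $-d$ must be feasible recession directions of $\{y:Ay-b\in\cK\}$ — this is exactly why Condition~\ref{cond:TRS-convexify}, rather than the weaker Condition~\ref{cond:TRS-relaxation}, is needed, since with only one feasible direction one cannot reach the unit sphere on both sides of $y$; and (iii) the fact that $d$ is a $\lambda_Q$-eigenvector is precisely what forces the second-order coefficient of $h$ along the line through $y$ to equal $\lambda_Q\|d\|^2$, which is what lets the relation $\|y+\tau d\|^2=1$ convert $h$ back into $f$. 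This direct argument also bypasses the failure of Condition~\ref{cond:A0_apex} for the matrices \eqref{eqn:trs-matrices-defn} that necessitated the lifting used in \cite{BKK14}.
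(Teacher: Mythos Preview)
Your proof is correct and follows essentially the same route as the paper's: both establish the easy inclusion by aggregating $W_0$ and $W_1$ with weights $(1-s,s)$, and both prove the reverse inclusion by moving the given point along the eigendirection $d$ from Condition~\ref{cond:TRS-convexify} to hit the unit sphere on each side, then writing the original point as a convex combination of the two resulting points of $X$. The only cosmetic difference is that you first reduce to the slice $x_{n+2}=f(y)$ via the recession direction $[0;0;1]$ and then compute $h(y^\pm)=f(y)\pm 2\tau g^\top d$ directly, whereas the paper keeps $x_{n+2}$ arbitrary, shows the perturbed points remain in $\cF_s$, and then uses $\bd(\cF_0^+)\cap\cF_s\subseteq\cF_1$; these are equivalent bookkeeping choices.
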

\begin{proof}
We will first establish that $\conv(\cF_0^+ \cap \cF_1 \cap \widehat{\cK} \cap H^1) = \cF_0^+ \cap \cF_s \cap \widehat{\cK} \cap H^1$. Since the sets $\cF^+, \cF_s, \widehat{\cK}, H^1$ are closed, this will immediately imply our closed convex hull result.

It is clear from the definition of $\cF_s$ and Lemma~\ref{lem:trs-conic-convexify1} that $\conv(\cF_0^+ \cap \cF_1 \cap \widehat{\cK} \cap H^1) \subseteq \cF_0^+ \cap \cF_s \cap \widehat{\cK} \cap H^1$. We will prove the reverse direction. 

Let $x = [y;x_{n+1};x_{n+2}]$ be a vector in $\cF_0^+  \cap \widehat{\cK}\cap H^1 \cap \cF_s$. Then $x$ satisfies
\begin{align*}
x^\top W_0 x &\leq 0,\\
Ay - bx_{n+1} &\in \cK, \\
x_{n+1} &= 1,\\
x^\top W_s x &\leq 0.
\end{align*}
We will show that $x\in\conv(\cF_0^+ \cap \cF_1 \cap \widehat{\cK}\cap H^1)$. 
If $x \in \cF_1$, then we are done. Suppose $x \not\in \cF_1$, that is, $x^\top W_1 x > 0$. 
Then from the definition of $s$,  $0<x^\top W_1 x$, and $x^\top W_s x\leq 0$, we have 
\[
0<s(x^\top W_1 x) - x^\top W_s x=-(1-s) x^\top W_0 x =  {\lambda_Q \over 1-\lambda_Q} (x_{n+1}^2 - \|y\|^2).
\] 
Because $\lambda_Q< 0$, this implies $\|y\|^2 < x_{n+1}^2$. 
Let $d$ be the vector given by Condition~\ref{cond:TRS-convexify} such that $Qd = \lambda_Q d$, $Ad \in \cK$, $-Ad \in \cK$, and $\|d\|^2=1$. We now consider the points $x^\eta:= [y+\eta d;\, x_{n+1};\, x_{n+2} + 2g^\top d \eta]$ for $\eta\in\R$. 
We first argue that $x^\eta\in\cF_s$ holds for all $\eta\in\R$.
To see this, note that 
\begin{align} \label{eqn:proof:thm:convexify}
&(y+\eta d)^\top (Q - \lambda_Q I_n) (y + \eta d) + 2g^\top (y + \eta d) x_{n+1} + \lambda_Q x_{n+1}^2\notag \\
&= (y+\eta d)^\top Q (y + \eta d) + 2g^\top (y + \eta d) x_{n+1} + \lambda_Q (x_{n+1}^2 - \|y+\eta d\|^2) \notag\\
&= y^\top Q y + 2 \underbrace{y^\top Qd}_{=\lambda_Q y^\top d} \eta + \underbrace{d^\top Q d}_{=\lambda_Q} \eta^2 + 2g^\top y + 2 g^\top d x_{n+1} \eta \notag\\
&\quad + \lambda_Q (x_{n+1}^2 - \|y\|^2 - 2 y^\top d \eta - \eta^2) \notag\\
&= y^\top Q y + 2g^\top y x_{n+1} + \lambda_Q (x_{n+1}^2 - \|y\|^2) + 2g^\top d x_{n+1} \eta \notag\\
&= y^\top (Q - \lambda_Q I_n) y + 2g^\top y x_{n+1} + \lambda_Q x_{n+1}^2 + 2g^\top d x_{n+1} \notag \eta\\
&= x_{n+1} x_{n+2} + (1-\lambda_Q)(x^\top W_s x) + 2g^\top d x_{n+1} \eta \notag\\
&\leq x_{n+1} x_{n+2} + 2g^\top d x_{n+1} \eta\\
&= x_{n+1} (x_{n+2} + 2g^\top d \eta),\notag
\end{align} 
where the third equation follows from $Qd = \lambda_Q d$ and $\|d\|^2 = 1$, and the inequality holds because $x^\top W_s x \leq 0$ and $\lambda_Q<0$. 
Then from the inequality~\eqref{eqn:proof:thm:convexify} and the definition of $\cF_s$ in \eqref{eqn:trs-F_s-defn}, we conclude $x^\eta\in\cF_s$ for all $\eta\in\R$. 
Moreover, because $\|y\|^2 < x_{n+1}^2$ and $d \neq 0$, there must exist $\delta, \epsilon > 0$ such that $\|y-\delta d\|^2 = \|y + \epsilon d\|^2 = x_{n+1}^2$. 
We define
\begin{align*}
x^\delta &:= [y-\delta d;\, x_{n+1};\, x_{n+2} - 2g^\top d \delta]\\
x^\epsilon &:= [y+\epsilon d;\, x_{n+1};\, x_{n+2} + 2g^\top d \epsilon].
\end{align*}
Then by our choice of $\delta, \epsilon$, we have $x^\delta, x^\epsilon \in \bd(\cF_0^+)$. 
From $s\in(0,1)$, $x^\eta\in\cF_s$ for all $\eta\in\R$, and the relation 
\[
(x^\eta)^\top W_s x^\eta= (1-s)[(x^\eta)^\top W_1 x^\eta] + s [(x^\eta)^\top W_0 x^\eta],
\]
we conclude that $x^\eta\in\cF_1$ for all $\eta$ such that $x^\eta\in \bd(\cF_0^+)$. In particular,  $x^\delta, x^\epsilon \in \cF_1$. 
Furthermore, by Condition~\ref{cond:TRS-convexify}, $\pm Ad \in \cK$, and since $\cK$ is a cone, $- Ad \delta, Ad \epsilon \in \cK$; thus $x^\delta, x^\epsilon \in \widehat{\cK}$. Finally, $x_{n+1} = 1$ in both $x^\delta, x^\epsilon$, and so we have $x^\delta, x^\epsilon \in \cF_0^+ \cap \cF_1 \cap \widehat{\cK} \cap H^1$. Now it is easy to see that
\[ x = \frac{\epsilon}{\delta + \epsilon} x^\delta + \frac{\delta}{\delta + \epsilon} x^\epsilon \in \conv(\cF_0^+ \cap \cF_1 \cap \widehat{\cK} \cap H^1).\]

As a consequence, we have the relation
\[ \cF_0^+ \cap \cF_1 \cap \widehat{\cK} \cap H^1 \subseteq \cF_0^+ \cap \cF_s \cap \widehat{\cK} \cap H^1 \subseteq \conv(\cF_0^+ \cap \cF_1 \cap \widehat{\cK} \cap H^1). \]
By Lemma~\ref{lem:trs-conic-convexify1}, the set $\cF_s \cap H^1$ is SOC representable and hence convex; this implies that $\cF_0^+ \cap \cF_s \cap \widehat{\cK} \cap H^1$ is convex also. Then taking the convex hull of all terms in the above inequality gives us the result.
\end{proof}

Note that the set $\cF_0^+ \cap \cF_s \cap \widehat{\cK}\cap H^1$ is closed. We give our explicit convex hull result for TRS below.
\begin{corollary}\label{cor:trs-convex-hull}
Let $X$ be the set defined in \eqref{eqn:X-set}. When $\lambda_Q < 0$, under Condition~\ref{cond:TRS-convexify} we have
\[
\conv(X) = \left\{ x = [y;1;x_{n+2}] : \begin{array}{rcl} \|y\| &\leq& 1\\ y^\top (Q - \lambda_Q I_n) y + 2g^\top y + \lambda_Q &\leq& x_{n+2}\\ Ay - b &\in& \cK \end{array} \right\}.
\]
As a result,
\begin{align*}
\Opt_h &= \min_y \left\{ h(y) = y^\top Q y + 2 g^\top y : \begin{array}{rcl} \|y\| &\leq& 1\\ Ay - b &\in& \cK \end{array} \right\}\\
&= \min_y \left\{ f(y) = y^\top (Q-\lambda_Q I_n) y + 2 g^\top y + \lambda_Q : \begin{array}{rcl} \|y\| &\leq& 1\\ Ay - b &\in& \cK \end{array} \right\}.
\end{align*}
\end{corollary}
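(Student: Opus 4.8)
The plan is to read Corollary~\ref{cor:trs-convex-hull} off as a direct specialization of Theorem~\ref{thm:trs-convexify} to the hyperplane $H^1 = \{x : x_{n+1} = 1\}$. First I would recall from the discussion preceding Theorem~\ref{thm:trs-convexify} that the epigraph set $X$ in \eqref{eqn:X-set} equals $\cF_0^+ \cap \cF_1 \cap \widehat{\cK} \cap H^1$, and that the proof of Theorem~\ref{thm:trs-convexify} in fact establishes the equality $\conv(X) = \cF_0^+ \cap \cF_s \cap \widehat{\cK} \cap H^1$ with $s = \frac{1}{1-\lambda_Q}$ directly (the passage to $\clconv$ being free since that set is closed, as already noted).

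Next I would substitute $x_{n+1} = 1$ into the three cones appearing on the right-hand side. In $\cF_0^+$ from \eqref{eqn:trs-cones-defn} this yields $\|y\|^2 \leq 1$ together with $1 \geq 0$, i.e.\ simply $\|y\| \leq 1$; in $\cF_s$ from \eqref{eqn:trs-F_s-defn} it yields $y^\top (Q - \lambda_Q I_n) y + 2 g^\top y + \lambda_Q \leq x_{n+2}$, which is precisely $f(y) \leq x_{n+2}$ for $f$ as in \eqref{eqn:f-function}; and in $\widehat{\cK}$ it yields $Ay - b \in \cK$. Collecting these three conditions gives the claimed explicit description of $\conv(X)$.

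For the optimal-value identities, I would invoke the epigraph reformulation \eqref{eqn:trsSOC-conic2}, which gives $\Opt_h = \min\{x_{n+2} : x \in X\}$. Since the objective $x_{n+2}$ is linear, its minimum over $X$ coincides with its minimum over $\conv(X)$; plugging in the explicit form just obtained and observing that at any minimizer the constraint $f(y) \leq x_{n+2}$ must be active, the problem collapses to $\min_y \{ f(y) : \|y\| \leq 1,\ Ay - b \in \cK \} = \Opt_f$, whence $\Opt_h = \Opt_f$.

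I do not anticipate a genuine obstacle here: the substantive work is entirely carried by Theorem~\ref{thm:trs-convexify}, and what remains is bookkeeping. The only points meriting a sentence of care are (i) that the corollary asserts the form of $\conv(X)$ rather than merely $\clconv(X)$, which is legitimate because the proof of Theorem~\ref{thm:trs-convexify} produces the convex-hull equality before any closure is taken and the right-hand set is already closed, and (ii) that one may write ``$\min$'' rather than ``$\inf$'' in the value identities, since $f$ is continuous and the feasible region $\{y : \|y\| \leq 1,\ Ay - b \in \cK\}$ is compact.
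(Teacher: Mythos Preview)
Your proposal is correct and matches the paper's approach: the corollary is stated without proof in the paper precisely because it follows immediately from Theorem~\ref{thm:trs-convexify} by substituting $x_{n+1}=1$ into the descriptions of $\cF_0^+$, $\cF_s$, and $\widehat{\cK}$, and then reading off the optimal-value identity from the linear-objective/epigraph reformulation \eqref{eqn:trsSOC-conic2}. Your remarks about $\conv$ versus $\clconv$ and about attainment of the minimum are appropriate and in line with what the paper already establishes in the proof of Theorem~\ref{thm:trs-convexify}.
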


\begin{remark}\label{rem:TRSExactConvexHull}
In the particular case of TRS with additional conic constraints, i.e., problem \eqref{eqn:trsSOC-conic}, under Condition~\ref{cond:TRS-convexify}, Corollary~\ref{cor:trs-convex-hull} shows that not only our convex relaxation given by \eqref{eqn:convex-trsSOC-conic} is tight but also we can characterize the convex hull of its epigraph exactly. Because Condition~\ref{cond:TRS-convexify} holds for the classical TRS, this then recovers the results from \cite[Section 6.2]{BKK14}.
\epr
\end{remark}

As a consequence of Remark~\ref{rem:cond:TRS-convexify} and  Corollary~\ref{cor:trs-convex-hull}, in all of the cases where Jeyakumar and Li \cite{JeyakumarLi2013} show the tightness of their convex reformulation, i.e., for robust least squares and robust SOC programming, we can further give the exact convex hull characterizations of the associated epigraphs.

We next present an example to illustrate that when Condition~\ref{cond:TRS-convexify} is violated, we may not be able to obtain the convex hull description. We also give a variant of this example to demonstrate that there are cases where our convex relaxation is tight while Condition~\ref{cond:TRS-convexify} is still violated.
\begin{example}\label{ex:condNecessityConvexHull}
Consider the following problem with the data given by
\[
Q = \begin{bmatrix}1 & 0\\ 0 & -1\end{bmatrix}, \quad g = \begin{bmatrix}0 \\ 1\end{bmatrix}, \quad A = \begin{bmatrix}0 & -1\end{bmatrix}, \quad b = \frac{1}{2}, \quad \cK = \R_+.
\]
In this example, Condition~\ref{cond:TRS-convexify} is violated. To see this, any vector $d$ such that $Qd = \lambda_Q d$ is of the form $d = [0;d_2]$. But then $A d = -d_2$. Hence, if $d_2 > 0$ then $Ad \not\in \cK$, and similarly, if $d_2 < 0$ then $-Ad \not\in \cK$.
\begin{figure}[htp]
\centering
  \subfigure[$X = \cF_0^+ \cap \cF_1 \cap \widehat{\cK} \cap H^1$]{%
    \label{ex2F0F1}%
    \includegraphics[width=0.32\textwidth]{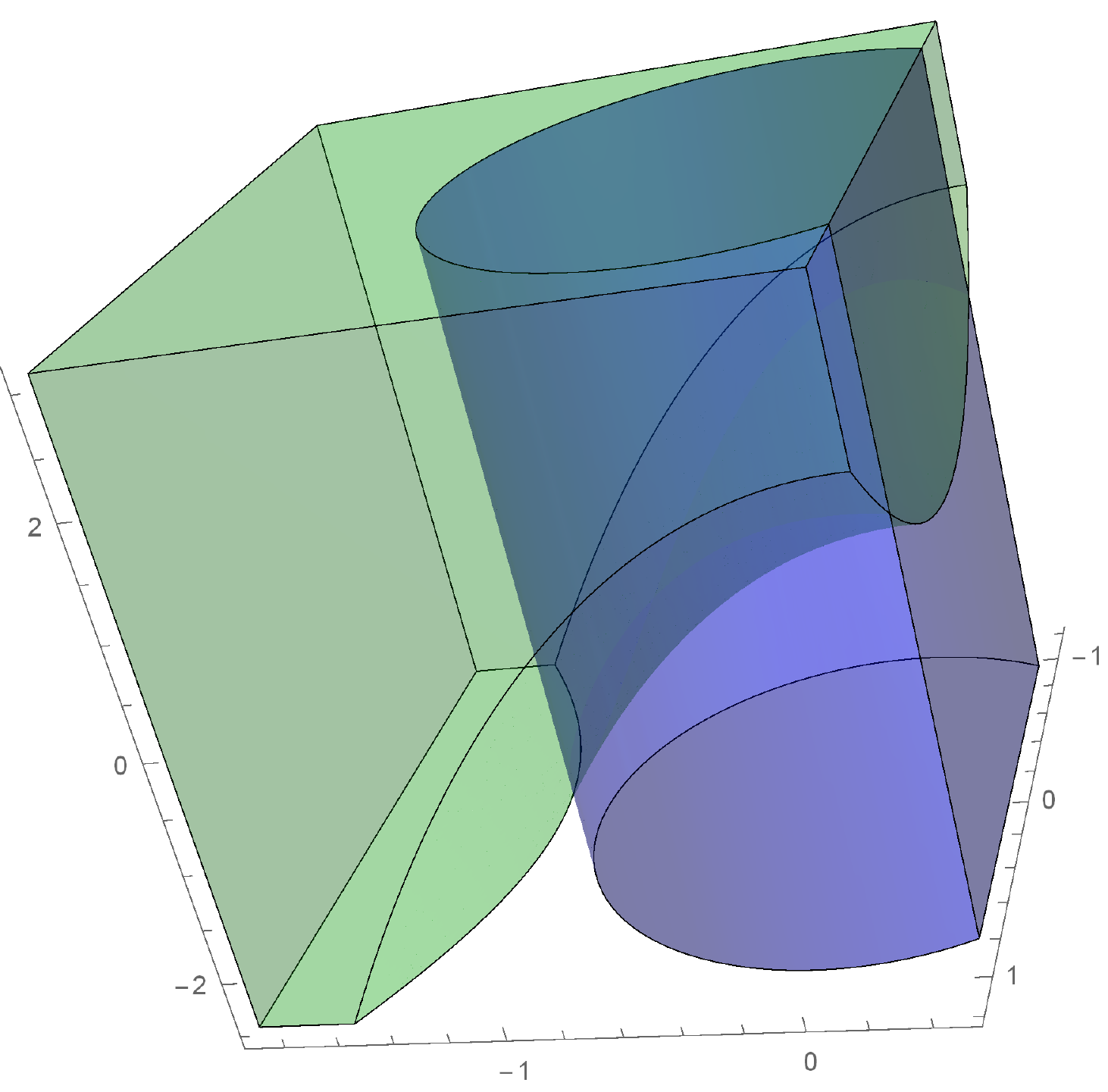}%
  } 
  \subfigure[$\cF_0^+ \cap \cF_s \cap \widehat{\cK} \cap H^1$]{%
    \label{ex2F0Fs}%
    \includegraphics[width=0.32\textwidth]{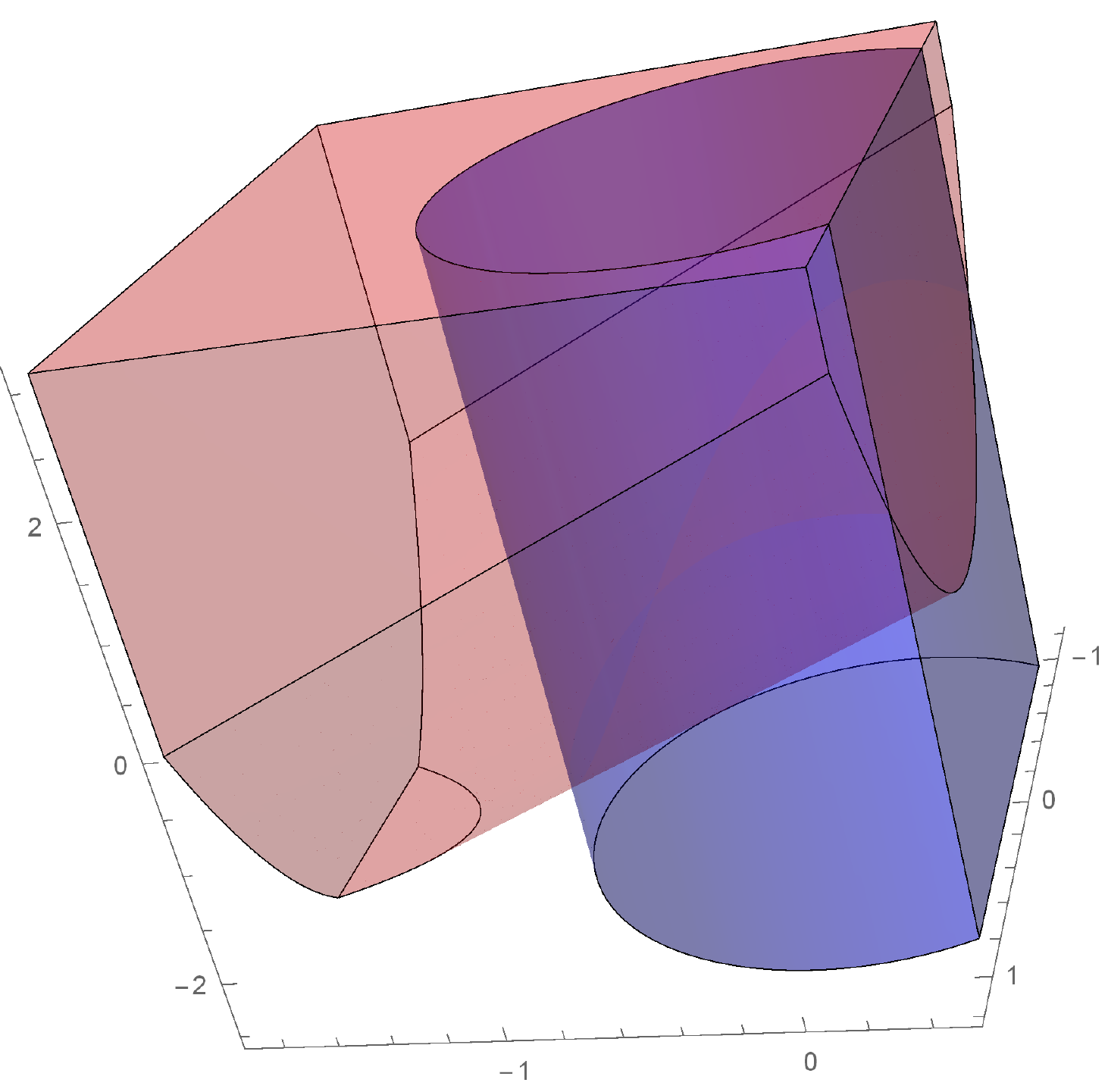}%
  }
  \subfigure[$X$ vs $\cF_0^+ \cap \cF_s \cap \widehat{\cK} \cap H^1$]{%
    \label{ex2F0F1Fs}%
    \includegraphics[width=0.32\textwidth]{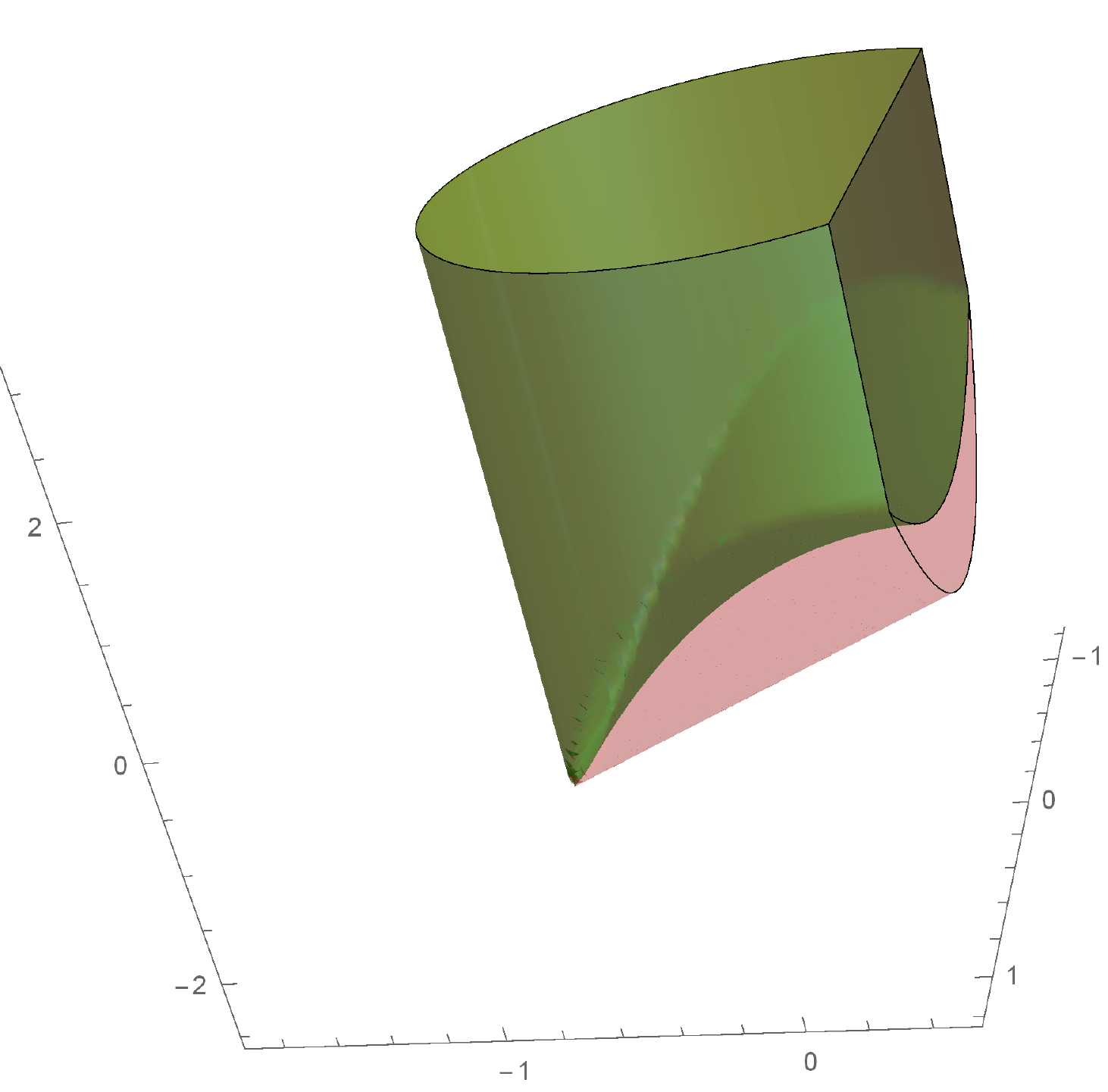}%
  }
\caption{Plots of the epigraph of Example~\ref{ex:condNecessityConvexHull}.}
\label{fig:ex2}
\end{figure}

Figure~\ref{ex2F0F1Fs} shows that the convex relaxation for the epigraph $X=\cF_0^+ \cap \cF_1 \cap \widehat{\cK} \cap H^1$ given by $\cF_0^+ \cap \cF_s \cap \widehat{\cK} \cap H^1$ does not give the convex hull of $X$. Note also that Condition~\ref{cond:TRS-relaxation} is satisfied for this example by taking $d = [0;1]$, and so by Theorem~\ref{thm:trsSOC-tight-convex}, the SOC optimization problem~\eqref{eqn:convex-trsSOC-conic} is a tight relaxation for \eqref{eqn:trsSOC-conic}. Despite this, we cannot give the exact convex hull characterization because Condition~\ref{cond:TRS-convexify} is violated.

If we were to set $b = 1$ instead of $b={1\over 2}$ in this example, then the linear inequality would become redundant. In this case, our convex relaxation would give the convex hull, as illustrated in Figure~\ref{fig:ex2-noIneq} below. Nevertheless, even in this case  Condition~\ref{cond:TRS-convexify} would still be violated. This demonstrates that Condition~\ref{cond:TRS-convexify} is not necessary to obtain the convex hull. 

\begin{figure}[htp]
\centering
  \subfigure[$X = \cF_0^+ \cap \cF_1 \cap \widehat{\cK} \cap H^1$]{%
    \label{ex2F0F1-noIneq}%
    \includegraphics[width=0.32\textwidth]{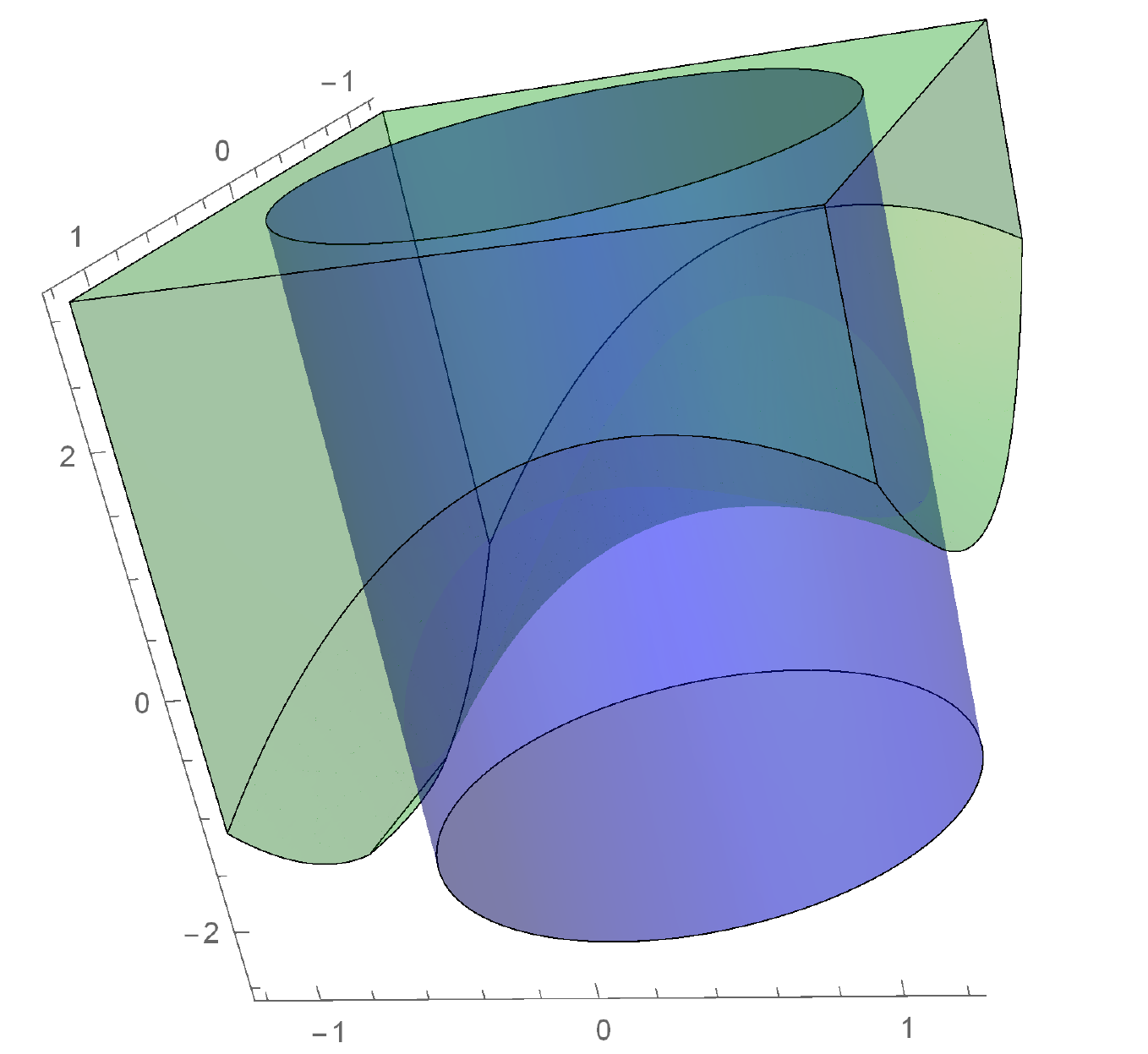}%
  } 
  \subfigure[$\cF_0^+ \cap \cF_s \cap \widehat{\cK} \cap H^1$]{%
    \label{ex2F0Fs-noIneq}%
    \includegraphics[width=0.32\textwidth]{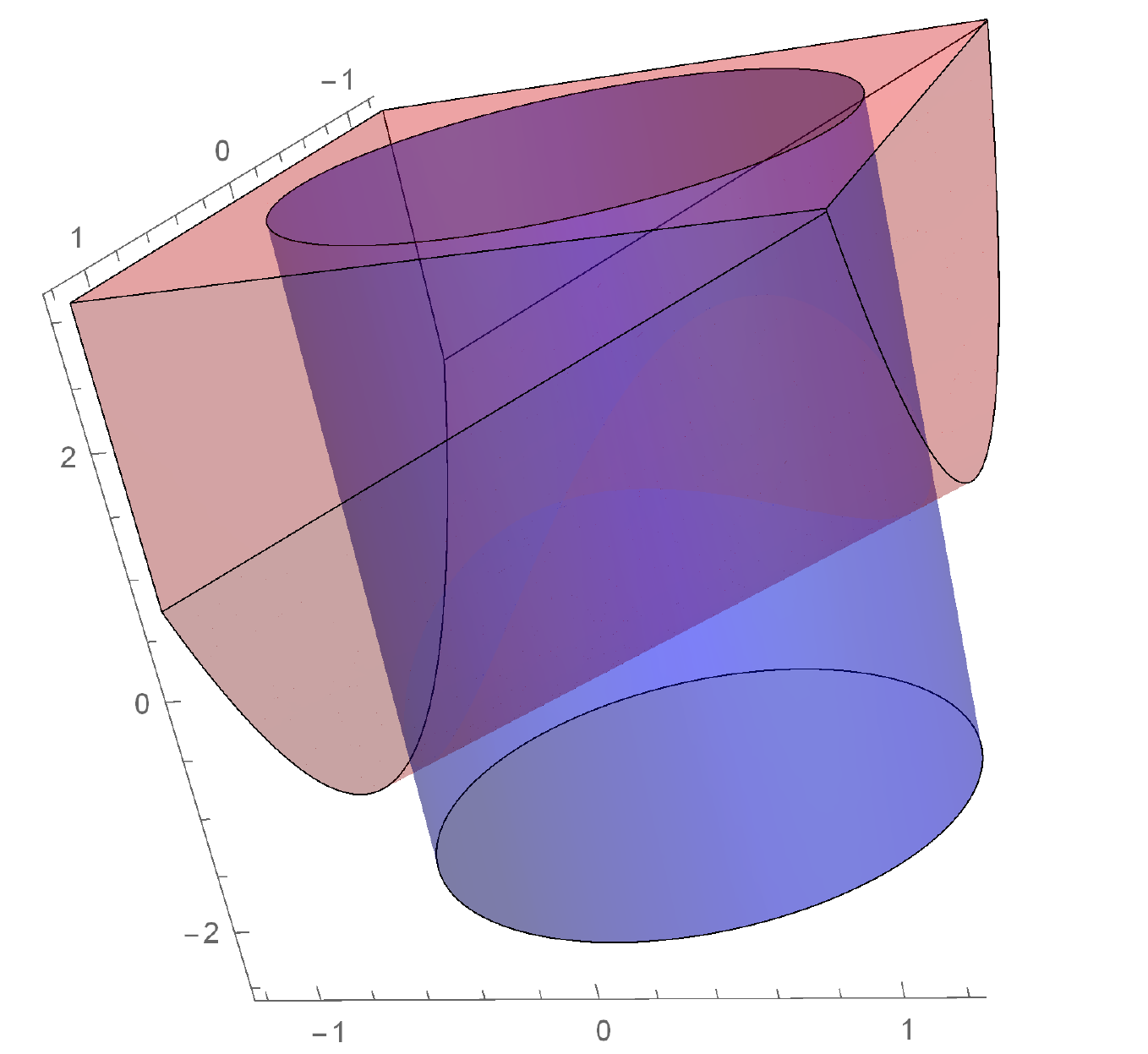}%
  }
  \subfigure[$X$ vs $\cF_0^+ \cap \cF_s \cap \widehat{\cK} \cap H^1$]{%
    \label{ex2F0F1Fs-noIneq}%
    \includegraphics[width=0.32\textwidth]{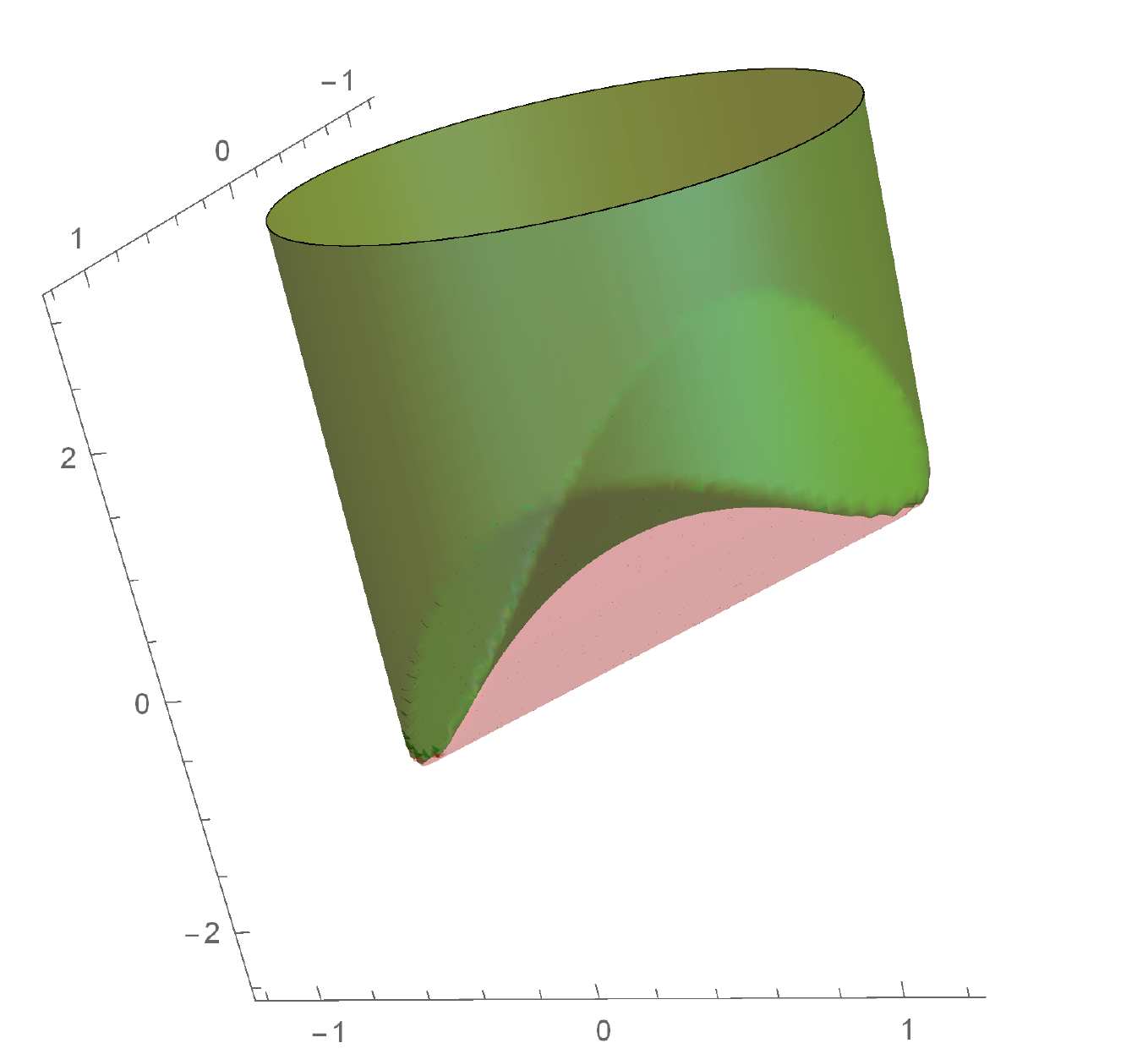}%
  }
\caption{Plots of the epigraph of Example~\ref{ex:condNecessityConvexHull} without the linear inequality.}
\label{fig:ex2-noIneq}
\end{figure}
\epr
\end{example}

We close this section with a simple result which highlights a particularly important structure of the extreme points of $X$.

\begin{lemma}\label{lemma:X-conv-comb}
Let $X$ be defined as in \eqref{eqn:X-set}. Assume that $\lambda_Q < 0$ and Condition~\ref{cond:TRS-convexify} also holds. 
Then any point $[y;1;x_{n+2}] \in X$ is an extreme point of $\conv(X)$ only if $\|y\| = 1$. 
\end{lemma}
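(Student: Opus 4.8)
The plan is to prove the contrapositive: if $[y;1;x_{n+2}]\in X$ with $\|y\|<1$, then this point is not an extreme point of $\conv(X)$, because it is a proper convex combination of two \emph{distinct} points that already lie in $X$. The perturbation direction is the eigenvector $d$ supplied by Condition~\ref{cond:TRS-convexify}, rescaled so that $\|d\|=1$, exactly as in the proof of Theorem~\ref{thm:trs-convexify}; the only new wrinkle is that here the perturbed points must stay in $X=\cF_0^+\cap\cF_1\cap\widehat{\cK}\cap H^1$ rather than merely in $\cF_s$.

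First I would pin down the admissible step lengths. Since $\|y\|<1$ and $d\neq 0$, the strictly convex quadratic $\eta\mapsto\|y+\eta d\|^2$ equals $\|y\|^2<1$ at $\eta=0$, so there are unique $\delta,\epsilon>0$ with $\|y-\delta d\|^2=\|y+\epsilon d\|^2=1$. I would then set $x^\delta:=[y-\delta d;\,1;\,x_{n+2}-2\delta\,g^\top d]$ and $x^\epsilon:=[y+\epsilon d;\,1;\,x_{n+2}+2\epsilon\,g^\top d]$, and observe that a routine check of the first-block and last coordinates gives $x=\tfrac{\epsilon}{\delta+\epsilon}x^\delta+\tfrac{\delta}{\delta+\epsilon}x^\epsilon$, a combination with strictly positive weights, with $x^\delta\neq x^\epsilon$ because $d\neq 0$ and $\delta+\epsilon>0$.

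The substance of the argument is verifying $x^\delta,x^\epsilon\in X$. Membership in $\cF_0^+\cap H^1$ is immediate from the choice of $\delta,\epsilon$ (the first block has norm $1$, the middle coordinate stays $1$). Membership in $\widehat{\cK}$ holds because $A(y-\delta d)-b=(Ay-b)+\delta(-Ad)$ and $A(y+\epsilon d)-b=(Ay-b)+\epsilon(Ad)$ are sums of elements of $\cK$, using $\delta,\epsilon>0$, $\pm Ad\in\cK$, and convexity of the cone $\cK$. The one computation to carry out is the epigraph constraint defining $\cF_1$: with $Qd=\lambda_Q d$ and $\|d\|=1$ one gets $h(y+\eta d)=h(y)+\lambda_Q\big(\|y+\eta d\|^2-\|y\|^2\big)+2\eta\,g^\top d$, so for $\eta\in\{-\delta,\epsilon\}$ the last coordinate of the corresponding point exceeds $h$ of its first block by exactly $\big(x_{n+2}-h(y)\big)-\lambda_Q(1-\|y\|^2)\ge 0$, since $x_{n+2}\ge h(y)$ (membership of $x$ in $X$), $\lambda_Q<0$, and $\|y\|<1$. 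This is precisely the $x_{n+1}=1$ specialization of inequality~\eqref{eqn:proof:thm:convexify}, so it can be quoted rather than redone.

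I do not expect a genuine obstacle here; the lemma is a light repackaging of the machinery already built for Theorem~\ref{thm:trs-convexify}. The two points worth stating with care are that $\delta,\epsilon$ are strictly positive (this uses $\|y\|<1$, not merely $\|y\|\le 1$), and that it suffices to exhibit $x^\delta,x^\epsilon$ inside $X$ itself, so no appeal to $\clconv(X)$ or to the $\cF_s$-representation is needed.
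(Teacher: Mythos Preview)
Your proposal is correct and follows essentially the same approach as the paper: perturb along the eigenvector $d$ from Condition~\ref{cond:TRS-convexify} in both directions until the unit sphere is hit, adjust the last coordinate by $2\eta\,g^\top d$, and verify that the resulting two points lie in $X$ and average to $x$. The only cosmetic difference is that the paper routes the epigraph check through $f(y+\eta d)=f(y)+2\eta\,g^\top d$ and then uses $f=h$ on the sphere, whereas you expand $h(y+\eta d)$ directly; the computations are equivalent.
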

\begin{proof}
Consider $[y;1;x_{n+2}] \in \conv(X)$ with $\|y\| < 1$. Let $d \neq 0$ be the vector given by Condition~\ref{cond:TRS-convexify}. Because $d$ satisfies $Qd = \lambda_Q d$ and $\pm Ad \in \cK$, for any $\epsilon \in \R$,
\begin{align*}
f(y+\epsilon d) &= (y+\epsilon d)^\top (Q - \lambda_Q I_n) (y+\epsilon d) + 2g^\top (y+\epsilon d) + \lambda_Q\\
&\quad= [y^\top (Q - \lambda_Q I_n) y + 2g^\top y + \lambda_Q ]+ 2g^\top d \epsilon\\
&\quad\leq x_{n+2} + 2g^\top d \epsilon.
\end{align*}
Now choose $\epsilon^+ > 0$ such that $\|y + \epsilon^+ d\| = 1$, and define $x^+ = [y+\epsilon^+ d; 1; x_{n+2} + 2g^\top d \epsilon^+]$. Then we have $x^+ \in X$, since $\|y+\epsilon^+ d\|=1$ guarantees $f(y+\epsilon^+ d) = h(y+\epsilon^+ d) \leq x_{n+2} + 2g^\top d \epsilon^+$. Note that we will have $\epsilon^+ > 0$ and finite since $\|y\| < 1$ and $d \neq 0$. Similarly, choosing $\epsilon^- > 0$ such that $\|y - \epsilon^+ d\| = 1$, $x^- = [y-\epsilon^- d; 1; x_{n+2} - 2g^\top d \epsilon^-] \in X $ also. Then the point $x$ will be a convex combination of $x^+, x^- \in X$ with weights $\epsilon^-/(\epsilon^- + \epsilon^+)$ and $\epsilon^+/(\epsilon^- + \epsilon^+)$ respectively.
\end{proof}

\subsection{Additional Hollow Constraints}\label{sec:TRS-additional-constraints}

In this section we explore additional constraints $y \in \cR$ included in the domain of TRS \eqref{eqn:trsSOC-conic}, where $\cR = \R^n\setminus \cP$ and $\cP$ is a given possibly nonconvex set. More precisely, we characterize the convex hull of the set $X \cap \widehat{\cR} = \cF_0^+ \cap \cF_1 \cap \widehat{\cK} \cap H^1 \cap \widehat{\cR}$ where $\cF_0^+$, $\cF_1$, $\widehat{\cK}$, and $H^1$ are as defined in \eqref{eqn:trs-cones-defn}, and $\widehat{\cR} := \left\{ [y;x_{n+1};x_{n+2}] : y \in \cR \right\}$.
 
We impose the following condition on $\cR = \R^n \setminus \cP$.
\begin{condition}\label{cond:TRS-hollows}
The set $\cP \subseteq \R^n$ satisfies $\cP \subseteq \left\{ y : \|y\| < 1,\ Ay-b \in \cK \right\}$.
\end{condition}
Consider the case where $Ay - b \in \cK$ is non-existent. If $\cP = \bigcup_{i=1}^m E_i$ is a union of ellipsoids $E_i = \left\{ y :\, y^\top W_i y + 2 b_i^\top y + c_i \leq 0 \right\}$ where each $W_i \succ 0$, then Condition~\ref{cond:TRS-hollows} can be checked by solving
\[
v_i = \min_y \left\{ 1 - \| y\|^2 :~ y^\top W_i y + 2 b_i^\top y + c_i \leq 0 \right\}.
\]
That is, $E_i$ satisfies Condition~\ref{cond:TRS-hollows} if and only if $v_i > 0$. The computation of $v_i$ as stated requires solving a nonconvex quadratic program, which is nothing but a classical TRS after an appropriate affine transformation of the variables is applied. Hence, our developments from Section~\ref{sec:SOCReformulation} give a tight SOC reformulation for it. In addition, the inhomogeneous $\cS$-lemma \cite[Proposition 3.5.2]{BenTalNemirovski2015LectureNotes} ensures that the associated semidefinite relaxation is tight.  Thus, Condition~\ref{cond:TRS-hollows} can be verified  efficiently when $\cP$ is a union of ellipsoids.

Hollow constraints have been studied in TRS literature under conditions similar to Condition~\ref{cond:TRS-hollows}. Most notably, the \emph{interval-bounded TRS} \cite{BenTalTeboulle1996,BM14,PongWolkowicz2014,SternWolkowicz1995,YeZhang2003} corresponds to the case when $\cR$ is a single lower-bounded quadratic constraint $y^\top D y \geq l$, where $D \succeq 0$.
The interval-bounded TRS is used to generate new steps in the context of the trust-region algorithm where minimum step lengths are enforced. In the case of interval-bounded TRS, Condition~\ref{cond:TRS-hollows} is automatically satisfied. It is shown in a number of these papers \cite{PongWolkowicz2014,YeZhang2003} that the natural SDP relaxation of interval-bounded TRS is tight. 
More recently, Yang et al.~\cite{YangBurer2016} showed the tightness of the SDP relaxation when the hollow set $\cP$ is the disjoint union of ellipsoids which do not intersect the boundary of the unit ball $\{y:\|y\|\leq 1\}$. As opposed to these results on tight SDP relaxations, Bienstock~\cite{Bienstock2016} has established that the general quadratically constrained quadratic programming problem
\[ \min_y \left\{ y^\top Q_0 y + 2g_0^\top y :~ y^\top Q_i y + 2 g_i^\top y + c_i \leq 0,\ i=1,\ldots,m \right\} \]
is polynomially solvable for a fixed number of constraints $m$ using a weak feasibility oracle, under the assumption that at least one quadratic constraint $y^\top Q_i y + 2 g_i^\top y + c_i \leq 0$ is strictly convex. In a similar vein, Bienstock and Michalka \cite{BienstockMichalka2014} also study TRS with additional ellipsoidal hollow constraints. Instead of giving the convex hull, \cite{BienstockMichalka2014} explores conditions that allow for polynomial solvability using a combinatorial enumeration technique and thus is able to cover cases where the set $\cP$ may not be contained in the unit ball. On a related subject, \cite{BM14} studies the characterization and separation of valid linear inequalities that convexify the epigraph of a convex, differentiable function whose domain is restricted to the complement of a convex set defined by linear or convex quadratic inequalities.

We note that these papers \cite{BenTalTeboulle1996,Bienstock2016,BM14,BienstockMichalka2014,PongWolkowicz2014,SternWolkowicz1995,YangBurer2016,YeZhang2003} consider the more general case of minimizing an arbitrary quadratic objective, which can be convex, over a domain given by possibly nonconvex quadratic constraints.  
On the other hand, our result applies to the special case of minimizing a nonconvex quadratic, i.e., $\lambda_Q < 0$, over the unit ball, a convex quadratic constraint. As a result, we are able to relax the assumptions that the set $\cP$ is generated by quadratics and the ellipsoidal hollows are disjoint.
Specifically, we show that under Condition~\ref{cond:TRS-hollows}, our  main convex hull result, i.e.,Theorem~\ref{thm:trs-convexify}, obtained without the constraint $y\in\cR$ is tight. 

\begin{theorem}\label{thm:TRS-convexify-hollows}
Let $X$ be defined in \eqref{eqn:X-set} and $\cR = \R^n \setminus \cP$ be a set satisfying Condition~\ref{cond:TRS-hollows}. Assume that $\lambda_Q < 0$ and Condition~\ref{cond:TRS-convexify} also holds. Then
\[
\conv\left(\left\{ [y;1;x_{n+2}] : \begin{array}{rcl} \|y\| &\leq& 1\\ y &\in& \cR\\ Ay - b &\in& \cK\\ y^\top Q y + 2g^\top y &\leq& x_{n+2} \end{array} \right\}\right) = \conv(X).
\]
\end{theorem}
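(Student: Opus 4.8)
Write $X_{\cR} := X \cap \widehat{\cR}$ for the set whose convex hull appears on the left-hand side, where $\widehat{\cR} := \{[y;x_{n+1};x_{n+2}] : y \in \cR\}$; since $X$ already enforces $x_{n+1}=1$, this is precisely the set in the statement. The plan is to prove the two inclusions separately. The inclusion $\conv(X_{\cR}) \subseteq \conv(X)$ is immediate because $X_{\cR} \subseteq X$. For the reverse it suffices to show $X \subseteq \conv(X_{\cR})$, since then $\conv(X) \subseteq \conv(\conv(X_{\cR})) = \conv(X_{\cR})$.

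So I would fix an arbitrary $x = [y;1;x_{n+2}] \in X$ and split into two cases. If $y \notin \cP$, then $y \in \cR$, hence $x \in X_{\cR}$ and there is nothing to do. The only substantive case is $y \in \cP$; by Condition~\ref{cond:TRS-hollows} this forces $\|y\| < 1$. I would then reuse verbatim the construction in the proof of Lemma~\ref{lemma:X-conv-comb}: let $d \neq 0$ be the vector furnished by Condition~\ref{cond:TRS-convexify}, and choose $\epsilon^+,\epsilon^- > 0$ (finite and positive, since $\|y\| < 1$ and $d \neq 0$) with $\|y + \epsilon^+ d\| = \|y - \epsilon^- d\| = 1$. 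Exactly as in that lemma, the points
\[
x^+ := [y + \epsilon^+ d;\, 1;\, x_{n+2} + 2 g^\top d\, \epsilon^+], \qquad x^- := [y - \epsilon^- d;\, 1;\, x_{n+2} - 2 g^\top d\, \epsilon^-]
\]
lie in $X$: membership $A(y \pm \epsilon^\pm d) - b \in \cK$ uses $\pm Ad \in \cK$ and convexity of $\cK$, and $h(y \pm \epsilon^\pm d) \le x_{n+2} \pm 2 g^\top d\, \epsilon^\pm$ follows from $f(y \pm \epsilon^\pm d) = f(y) \pm 2 g^\top d\, \epsilon^\pm$ together with $f = h$ on the unit sphere. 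The new observation is only this: since $\cP \subseteq \{y : \|y\| < 1\}$ by Condition~\ref{cond:TRS-hollows} while $\|y + \epsilon^+ d\| = \|y - \epsilon^- d\| = 1$, neither $y + \epsilon^+ d$ nor $y - \epsilon^- d$ belongs to $\cP$, so both lie in $\cR$ and therefore $x^+, x^- \in X_{\cR}$. Finally $x = \frac{\epsilon^-}{\epsilon^- + \epsilon^+}\, x^+ + \frac{\epsilon^+}{\epsilon^- + \epsilon^+}\, x^-$ (the first block reproduces $y$ because the $d$-terms cancel, and the last coordinate is affine in the same weights), so $x \in \conv(X_{\cR})$. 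This completes $X \subseteq \conv(X_{\cR})$ and hence the theorem.

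I do not expect a genuine obstacle here: the entire argument is the slogan ``any feasible point with $y$ inside the hollow has $\|y\| < 1$, hence is a convex combination of two boundary points that automatically escape the hollow,'' which is exactly the mechanism of Lemma~\ref{lemma:X-conv-comb} combined with Condition~\ref{cond:TRS-hollows}. The only points to handle with care are that the reasoning works with $\conv$ directly (no closure or limiting step is needed, in contrast to Theorem~\ref{thm:trs-convexify}), and that Condition~\ref{cond:TRS-convexify} is precisely what guarantees that the perturbation direction $d$ keeps $x^\pm$ inside $\widehat{\cK}$.
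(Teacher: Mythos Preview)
Your proposal is correct. The underlying mechanism is the same as the paper's --- the $d$-perturbation from Condition~\ref{cond:TRS-convexify} pushes any point with $\|y\|<1$ to two boundary points that still lie in $X$ --- but the organization differs. The paper invokes Lemma~\ref{lemma:X-conv-comb} only to conclude that every extreme point of $\conv(X)$ has $\|y\|=1$, and then appeals to a Minkowski-type decomposition $\conv(S)=\conv(\Ext(S))+\Rec(S)$ to finish, arguing that $\Ext$ and $\Rec$ coincide for $X$ and $X\cap\widehat{\cR}$. You instead unwind the construction inside Lemma~\ref{lemma:X-conv-comb} and show directly that each $x\in X$ is a convex combination of two explicit points of $X_{\cR}$, which yields $X\subseteq\conv(X_{\cR})$ without any extreme-point machinery. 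Your route is a bit more elementary and sidesteps the need to justify the extreme-point/recession-cone decomposition for $\conv(X\cap\widehat{\cR})$ (which is not a priori closed). The paper's route, on the other hand, packages the perturbation step into a reusable lemma about $\Ext(\conv(X))$.
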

\begin{proof}
Denoting $\widehat{\cR} := \left\{ [y;x_{n+1};x_{n+2}] : y \in \cR \right\}$, our aim is to prove $\conv(X \cap \widehat{\cR}) = \conv(X)$. We trivially have $\conv(X \cap \widehat{\cR}) \subseteq \conv(X)$. 
To prove $\conv(X \cap \widehat{\cR}) \supseteq \conv(X)$, note that from Lemma~\ref{lemma:X-conv-comb}, any point $x = [y;1;x_{n+2}] \in \Ext(\conv(X))$ satisfies $\|y\|=1$. Also, by Condition~\ref{cond:TRS-hollows}, the constraint $x\in\widehat{R}$ does not remove any of the points with $\|y\|=1$, in particular, all of the extreme points of $X$ are also in $\widehat{R}$. Thus, $\Ext(X\cap\widehat{R})=\Ext(X)$. Moreover, because $\|y\|\leq 1$, the only recessive direction of $\conv(X)$ is $[0;0;1]$, i.e., $\Rec(\conv(X))=\cone([0;0;1])$. Note $[0;0;1]$ is also a recessive direction in $\widehat{R}$. Then the result follows from 
\begin{align*}
\conv(X\cap\widehat{R}) &= \conv(\Ext(X\cap\widehat{R}))+\Rec(X\cap\widehat{R}) \\
&= \conv(\Ext(X))+\Rec(X)=\conv(X) .
\end{align*}

\end{proof}

Theorem~\ref{thm:TRS-convexify-hollows} has the following immediate implication. 
\begin{corollary}\label{cor:TRS-convexify-hollows}
When $\lambda_Q < 0$ and $l \leq 1$, an exact convex reformulation of the interval-bounded TRS 
\[ \min_y \left\{ y^\top Q y + 2g^\top y :~ l \leq \|y\| \leq 1 \right\} \]
is given by \eqref{eqn:convex-trsSOC-classical}.
\end{corollary}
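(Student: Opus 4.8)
The plan is to recognize the interval-bounded TRS as the classical TRS \eqref{eqn:trsSOC-classical} augmented with a single ball-shaped hollow constraint, and then to combine Theorem~\ref{thm:TRS-convexify-hollows} with the classical-TRS convex hull description in Corollary~\ref{cor:trs-convex-hull}. Throughout I would work in the classical setting $A = I_n$, $b = 0$, $\cK = \R^n$, so that the conic constraint $Ay - b \in \cK$ is vacuous and the set $X$ from \eqref{eqn:X-set} is precisely the epigraph of \eqref{eqn:trsSOC-classical}.

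First I would set $\cP := \{ y :\, \|y\| < l \}$ (an open Euclidean ball, and the empty set when $l \le 0$) and $\cR := \R^n \setminus \cP = \{ y :\, \|y\| \ge l \}$, so that the feasible region $\{ y :\, l \le \|y\| \le 1 \}$ equals $\{ y :\, \|y\| \le 1 \} \cap \cR$. Introducing the epigraph variable $x_{n+2}$ exactly as in \eqref{eqn:trsSOC-conic2}, the interval-bounded TRS rewrites as $\min\{ x_{n+2} :\, [y;1;x_{n+2}] \in X \cap \widehat{\cR} \}$, where $\widehat{\cR} := \{ [y;x_{n+1};x_{n+2}] :\, y \in \cR \}$. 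Since $\cP \subseteq \{ y :\, \|y\| < 1 \}$ precisely when $l \le 1$, and since no conic constraint is present, Condition~\ref{cond:TRS-hollows} holds (when $l \le 0$ the set $\cP$ is empty and this is automatic, which just recovers Theorem~\ref{thm:trsSOC-classical-tight-convex}; when $l = 1$ the region degenerates to the unit sphere). Moreover $\cP$ is a single ellipsoid, so it is a legitimate hollow of the type discussed in Section~\ref{sec:TRS-additional-constraints}.

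Next I would verify Condition~\ref{cond:TRS-convexify}: taking $d$ to be any eigenvector of $Q$ associated with $\lambda_Q$ gives $d \neq 0$, $Qd = \lambda_Q d$, and $\pm A d = \pm d \in \R^n = \cK$, so the condition holds (cf.\ Remark~\ref{rem:cond:TRS-relaxation}). With $\lambda_Q < 0$, Condition~\ref{cond:TRS-hollows}, and Condition~\ref{cond:TRS-convexify} all in force, Theorem~\ref{thm:TRS-convexify-hollows} yields $\conv(X \cap \widehat{\cR}) = \conv(X)$, and Corollary~\ref{cor:trs-convex-hull} (specialized to the classical case) gives $\conv(X) = \{ [y;1;x_{n+2}] :\, \|y\| \le 1,\ f(y) \le x_{n+2} \}$ with $f(y) = y^\top(Q - \lambda_Q I_n) y + 2 g^\top y + \lambda_Q$.

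Finally, because the epigraph objective $x_{n+2}$ is linear, minimizing it over $X \cap \widehat{\cR}$ equals minimizing it over $\conv(X \cap \widehat{\cR}) = \conv(X)$, which in turn equals $\min\{ f(y) :\, \|y\| \le 1 \}$ — exactly the optimal value of \eqref{eqn:convex-trsSOC-classical}. Both minima are attained by compactness, and a minimizer $y^*$ of \eqref{eqn:convex-trsSOC-classical} with $\|y^*\| = 1$, which exists by Corollary~\ref{cor:trsSOC-tightness} (or Lemma~\ref{lemma:X-conv-comb}), is feasible and optimal for the interval-bounded TRS as soon as $l \le 1$, so the reformulation is exact also at the level of optimal solutions. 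I do not expect a genuine obstacle here: the substance is entirely carried by Theorem~\ref{thm:TRS-convexify-hollows} and Corollary~\ref{cor:trs-convex-hull}, and the only care needed is bookkeeping the degenerate endpoints $l \le 0$ and $l = 1$ and fixing the meaning of ``exact convex reformulation'' as equality of optimal values.
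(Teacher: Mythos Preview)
Your proposal is correct and follows exactly the route the paper intends: the corollary is stated as an ``immediate implication'' of Theorem~\ref{thm:TRS-convexify-hollows}, and you have simply spelled out the bookkeeping (choosing $\cP=\{y:\|y\|<l\}$, checking Conditions~\ref{cond:TRS-convexify} and~\ref{cond:TRS-hollows} in the classical setting, and passing to the convex hull via Corollary~\ref{cor:trs-convex-hull}). The only minor simplification available is that, once $\conv(X\cap\widehat{\cR})=\conv(X)$ is established, you can appeal directly to Theorem~\ref{thm:trsSOC-classical-tight-convex} instead of Corollary~\ref{cor:trs-convex-hull} to conclude that minimizing $x_{n+2}$ over $\conv(X)$ equals $\Opt_f$ of \eqref{eqn:convex-trsSOC-classical}.
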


Corollary~\ref{cor:TRS-convexify-hollows} gives a convex reformulation for the interval-bounded TRS with $\lambda_Q<0$, as do results from \cite{BenTalTeboulle1996,PongWolkowicz2014,SternWolkowicz1995,YeZhang2003}. These results were often derived as a consequence of a simultaneously diagonalizable assumption of the underlying matrices associated with TRS, or through SDP relaxations. In contrast to this, Condition~\ref{cond:TRS-hollows} and Theorem~\ref{thm:TRS-convexify-hollows} highlights the important geometric aspect, and provide a convex reformulation without additional variables. In addition, Corollary~\ref{cor:TRS-convexify-hollows} together with Theorem~\ref{thm:classicalTRScomplexity} establish the convergence rate of FOMs to solve interval-bounded TRS as opposed to specialized algorithms suggested in \cite{PongWolkowicz2014}.

\begin{remark} 
While preparing our first revision, an overlap of our original submission with a recent paper was brought to our attention. 
The first version of our work \cite{Ho-NguyenKK2016TRSWeb} was published online on March 10, 2016 on archives \emph{Optimization Online} and \emph{arXiv} and sent to a journal for possible publication. Five months after this date, on August 20, 2016, a paper by Jiulin Wang and Yong Xia \cite{WangXia2016} has appeared in online form on the journal \emph{Optimization Letters}. It appears that this paper was submitted to \emph{Optimization Letters} on March 21, 2016, 11 days after our paper was posted in public domain. To the best of our knowledge, Wang and Xia's paper \cite{WangXia2016} was not available in public domain or available to us before August 20, 2016. The paper \cite{WangXia2016} has significant overlap with a part of the results in our paper. In particular, \cite[Theorem 1]{WangXia2016} is a corollary of results in our original submission, see \cite[Theorem 2.7 and Theorem 3.8]{Ho-NguyenKK2016TRSWeb}. Moreover, we were the first ones to note and discuss the use of Nesterov's accelerated gradient descent algorithm in the context of TRS and demonstrate that it achieves the best-known theoretical convergence rate to solve TRS and some of its variants. Specifically, \cite[Section 2.2]{Ho-NguyenKK2016TRSWeb} along with the conclusion of \cite[Theorem 3.8]{Ho-NguyenKK2016TRSWeb} in the case of interval-bounded TRS covers not only \cite[Section 3]{WangXia2016} but also highlights that there is no need to modify Nesterov's accelerated gradient descent algorithm to solve exact convex reformulation of the interval-bounded TRS. Therefore, the convergence rate established in \cite[Section 2.2]{Ho-NguyenKK2016TRSWeb} did already imply \cite[Theorem 4]{WangXia2016}. To the best of our understanding, the main results in  \cite{WangXia2016} are \cite[Theorems 1 and 4]{WangXia2016}; whereas we also study SOC-based convex reformulations and convex hull descriptions of TRS and its variants with additional conic constraints and/or general hollow constraints see \cite[Sections 2.1 and 3]{Ho-NguyenKK2016TRSWeb}.

\epr
\end{remark}

\section*{Acknowledgments}
The authors wish to thank the review team for their constructive feedback that improved the presentation of the material in this paper. 
This research is supported in part by NSF grant CMMI 1454548.

\bibliographystyle{abbrv}
\bibliography{bibliography}

\appendix
\section{Working with Approximate Eigenvalues}\label{app:approx-eigenvalues}
Consider the classical TRS \eqref{eqn:trsSOC-classical} and its convex reformulation \eqref{eqn:convex-trsSOC-classical}. In practice, we will actually form the objective $y^\top (Q - \gamma I_n) y + 2g^\top y + \gamma$ where $\gamma \approx \lambda_Q$ is an approximation. Due to this imprecision, we must ensure that the objective remains convex. To do this, suppose that we solve the minimum eigenvalue problem of $Q$ to within $\epsilon$-accuracy, and obtain  an approximate solution $\lambda_Q-\epsilon < \lambda < \lambda_Q + \epsilon$. Subtracting $\epsilon$ from the inequality, we obtain $\lambda_Q - 2\epsilon < \lambda - \epsilon < \lambda_Q$. To ensure the convexity of the objective, we set $\gamma := \lambda - \epsilon < \lambda_Q$ which is an underestimate of $\lambda_Q$, ensuring that $Q - \gamma I_n \succ 0$. Let $\eta := \lambda_Q - \gamma$ which satisfies $0 < \eta < 2\epsilon$, and 
\[
f_\eta(y) := y^\top (Q-\gamma I_n) y + 2g^\top y = y^\top (Q-(\lambda_Q - \eta) I_n) y + 2g^\top y = f(y) + \eta \|y\|^2.
\]
Based on this scheme, we next explore the effects of solving
\begin{equation}\label{eqn:TRS-approx-objective}
\min_y \left\{ f_\eta(y) :~ \|y\| \leq 1 \right\}
\end{equation}
instead of 
\eqref{eqn:convex-trsSOC-classical}. Let $y^*$ be an optimal solution to the true convex reformulation \eqref{eqn:convex-trsSOC-classical}. Let $y^\eta$ be an optimal solution to \eqref{eqn:TRS-approx-objective} and $\bar{y}^\eta$ be an approximate optimal solution. Then we can bound the objective value $f(\bar{y}^\eta)$ as
\[
f(\bar{y}^\eta) - f(y^*) = f_\eta(\bar{y}^\eta) - f_\eta(y^*) + \eta(\|y^*\|^2 - \|\bar{y}^\eta\|^2) \leq f_\eta(\bar{y}^\eta) - f_\eta(y^\eta) + \eta,
\]
where the last inequality follows from $\|y^*\| \leq 1$ and $\|\bar{y}^\eta\|\leq 1$.
Thus, the convergence rate of $\bar{y}^\eta$ to the optimum of \eqref{eqn:convex-trsSOC-classical} is controlled by the size of $\eta$ and the convergence rate for solving \eqref{eqn:TRS-approx-objective}.

We can also control the distance between $y^\eta$ and $y^*$. Because $f_\eta(y)$ is a $2\eta$-strongly convex function, we have
\begin{align*}
\eta \left\| y^* - y^\eta \right\|^2 &\leq f_\eta(y^*) - f_{\eta}(y^\eta) + \grad f_{\eta}(y^\eta)^\top (y^\eta - y^*)\\
&= f(y^*) - f(y^\eta) + \grad f_{\eta}(y^\eta)^\top (y^\eta - y^*) + \eta(\|y^*\|^2 - \|y^\eta\|^2)\\
&\leq \eta(\|y^*\|^2 - \|y^\eta\|^2),
\end{align*}
where the last inequality follows from the optimality of $y^\eta$ for the problem~\eqref{eqn:TRS-approx-objective}, i.e., $\grad f_{\eta}(y^\eta)^\top (y^\eta - y^*) \leq 0$, and the optimality of $y^*$ for the problem~\eqref{eqn:convex-trsSOC-classical}. Then $\|y^\eta\| \leq \|y^*\|$. Also, from $\|y^*\| \leq 1$, we deduce that if $\|y^\eta\| = 1$, then $y^* = y^\eta$. When $\|y^\eta\| < 1$, the only constraint in our domain is inactive, and thus we conclude that $y^\eta$ is also optimum for the unconstrained minimization problem. Then  the optimality conditions leads to $\grad f_\eta(y^\eta) = 0$. This implies that $y^\eta = -(Q + (\eta - \lambda_Q) I_n )^{-1} g$. Moreover, $y^*$ satisfies the optimality condition $\grad f(y^*)^\top (y^* - y) \leq 0$ for all $y$ such that $\|y\| \leq 1$. Since our domain is the unit ball, this is true if and only if $\grad f(y^*) = -\alpha y^*$, for some $\alpha \geq 0$. Therefore, $y^* = -(Q + (\alpha - \lambda_Q) I_n )^{\dagger} g$, where $A^{\dagger}$ denotes the pseudo-inverse of a matrix $A$. If we denote the ordered eigenvalues of $Q$ by $q_i$ and their corresponding orthonormal eigenvectors by $u_i$, we obtain
\[
\|y^\eta\|^2 = \sum_{i=1}^{n} \frac{(u_i^\top g)^2}{(q_i - q_n + \eta)^2}\quad \mbox{and}\quad
\|y^*\|^2 = \sum_{i=1}^{n} \frac{(u_i^\top g)^2}{(q_i - q_n + \alpha)^2}.
\]
Note that it is possible to have $\alpha = 0$ and $q_i - q_n = 0$. However, this happens only when $u_i^\top g = 0$, so we follow the convention $\frac{0}{0} = 0$. After some simple algebra, we have the equality
\begin{align*}
\|y^*\|^2 - \|y^\eta\|^2 &= \sum_{i=1}^{n} \frac{(u_i^\top g)^2}{(q_i - q_n + \alpha)^2} - \sum_{i=1}^{n} \frac{(u_i^\top g)^2}{(q_i - q_n + \eta)^2}\\
&= (\eta - \alpha) \sum_{i=1}^{n} (u_i^\top g)^2 \frac{2q_i - 2q_n + \eta + \alpha}{(q_i - q_n + \alpha)^2 (q_i - q_n + \eta)^2}.
\end{align*}
Since $\|y^*\| \geq \|y^\eta\|$ and $\eta>0$, we must have $\eta \geq \alpha$. Also, $\eta \leq \alpha$ is possible only if $y^\eta = y^*$. Hence, we have
\begin{align*}
\|y^*\|^2 - \|y^\eta\|^2 &= (\eta - \alpha)_+ \sum_{i=1}^{n} (u_i^\top g)^2 \frac{2q_i - 2q_n + (\eta - \alpha)_+ + 2\alpha}{(q_i - q_n + \alpha)^2 (q_i - q_n + (\eta-\alpha)_+ + \alpha)^2}\\
&\leq (\eta - \alpha)_+ \sum_{i=1}^{n} (u_i^\top g)^2 \frac{2q_i - 2q_n + (\eta - \alpha)_+ + 2\alpha}{(q_i - q_n + \alpha)^4}\\
&= 2(\eta - \alpha)_+ \sum_{i=1}^{n} \frac{(u_i^\top g)^2}{(q_i - q_n + \alpha)^3} + (\eta - \alpha)_+^2 \sum_{i=1}^{n} \frac{(u_i^\top g)^2}{(q_i - q_n + \alpha)^4}.
\end{align*}
This shows that $\|y^*\|^2 - \|y^\eta\|^2 \leq \phi \eta + o(\eta)$, where $\phi = 2(y^*)^\top (Q+(\alpha - \lambda_Q)I_n)^{\dagger} y^*$. Therefore, 
\[
\|y^\eta - y^*\|^2 \leq \|y^*\|^2 - \|y^\eta\|^2 \leq \phi \eta + o(\eta).
\]
Thus $y^\eta$ has error $O(\sqrt{\eta})$, which is expected since the error in the objective function is $O(\eta)$, and the objective function is quadratic.

%\newpage
\section{Computation of $s$ value}\label{app:s-computation}
Recall the notation $\tilde{y} = [y;\tilde{y}_{n+1}]$ and $\tilde{x} = [\tilde{y};x_{n+1};x_{n+2}]$. For the set $Y$ in \eqref{eqn:BKK-nonconvex-set}, Condition \ref{cond:one_neg_eval} is satisfied by construction, and Condition \ref{cond:interior} is satisfied by taking $\tilde{x}'=[y'; \tilde{y}_{n+1}';x_{n+1}';x_{n+2}']$ with $y'=0$, $\tilde{y}_{n+1}'=\frac{1}{2}$, $x_{n+1}'=1$ and $x_{n+2}'=0$. 
This ensures that for any $t \in [0,1]$, we have
\begin{equation}\label{eqn:BKK-matrices-t}
\tilde{W}_t = (1-t) \tilde{W}_0 + t\tilde{W}_1 = \begin{bmatrix} (1-t)I_{n+1} + t \tilde{Q} & t \tilde{g} & 0 \\ t \tilde{g}^\top  & t-1 & 0\\ 0^\top  & 0 & t \end{bmatrix},
\end{equation}
and $(\tilde{x}')^\top \tilde{W}_t \tilde{x}' = (\tilde{x}')^\top  ((1-t)\tilde{W}_0 + t\tilde{W}_1) \tilde{x}' < 0$. Thus, by the variational characterization of eigenvalues, $\tilde{W}_t$ has at least one negative eigenvalue. 
Also, Condition \ref{cond:A0_apex}(ii) is now satisfied. 

We next show that the precise value of $s$ is simply determined by $\lambda_Q$. 

\begin{lemma}\label{lem:s-value}
Suppose $\lambda_Q<0$. Consider $\tilde{W}_0,\tilde{W}_1$ as defined in \eqref{eqn:BKK-matrices}. Then, the maximal $t\in[0,1]$ that ensures that the matrix $\tilde{W}_t$ in \eqref{eqn:BKK-matrices-t} has a single negative eigenvalue for all $t\in[0,s]$, is invertible for all $t\in(0,s)$, 
and $\tilde{W}_s$ is singular is given by  
\[%\begin{equation}\label{eqn:choice-of-s}
s = \frac{1}{1-\lambda_Q} \in (0,1).
\]%\end{equation}
\end{lemma}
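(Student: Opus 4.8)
The plan is to read off the inertia of $\tilde W_t$ from its block structure. Write $\tilde W_t = \begin{bmatrix} M_t & 0 \\ 0 & t \end{bmatrix}$ with $M_t := \begin{bmatrix} N_t & t\tilde g \\ t\tilde g^\top & t-1 \end{bmatrix}$ and $N_t := (1-t) I_{n+1} + t\tilde Q$. For $t>0$ the trailing $1\times 1$ block contributes exactly one positive eigenvalue, so the whole question reduces to counting the negative and zero eigenvalues of $M_t$. Since the eigenvalues of $N_t$ are $1 - t(1-\mu)$ as $\mu$ runs over $\mathrm{spec}(\tilde Q)$, the matrix $N_t$ is positive definite for $t < \tfrac1{1-\lambda_Q}$; positive semidefinite at $t = \tfrac1{1-\lambda_Q}$ with $\ker N_t = \ker(\tilde Q - \lambda_Q I_{n+1})$ (of dimension at least two, by construction of $\tilde Q$); and indefinite with at least two negative eigenvalues for $t > \tfrac1{1-\lambda_Q}$. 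Set $s := \tfrac1{1-\lambda_Q}$ and note $s \in (0,1)$ because $\lambda_Q < 0$.

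Next I would count the negative eigenvalues of $M_t$ using the elementary fact that a symmetric matrix that is positive semidefinite on a subspace of codimension one has at most one negative eigenvalue, while one that is negative on some vector has at least one. For every $t \in [0,s]$, on the hyperplane $\{x_{n+1}=0\}$ the form $x \mapsto x^\top M_t x = \tilde y^\top N_t \tilde y$ is nonnegative, and taking $\tilde y = 0$, $x_{n+1}=1$ gives $x^\top M_t x = t-1 < 0$; hence $M_t$, and therefore $\tilde W_t$, has exactly one negative eigenvalue on all of $[0,s]$. For invertibility on $(0,s)$: if $M_t x = 0$ then $N_t \succ 0$ forces $\tilde y = -t\,x_{n+1} N_t^{-1}\tilde g$, and substituting into the last row yields $x_{n+1}\bigl((t-1) - t^2 \tilde g^\top N_t^{-1}\tilde g\bigr) = 0$ with the bracket strictly negative, so $x = 0$; together with $t>0$ this gives $\det \tilde W_t \neq 0$. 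At $t=0$, $\tilde W_0 = \mathrm{Diag}(I_{n+1}, -1, 0)$ is singular but still has a single negative eigenvalue, which is consistent with the statement.

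At the endpoint $t = s$ I would exhibit a nonzero null vector of $\tilde W_s$: the coordinate vector $e_{n+1} \in \R^{n+1}$ is an eigenvector of $\tilde Q$ for $\lambda_Q$, hence lies in $\ker N_s$, and since $\tilde g = [g;0]$ we have $\tilde g^\top e_{n+1} = 0$; therefore $[e_{n+1};0;0] \in \Null(\tilde W_s)$, so $\tilde W_s$ is singular. Finally, to see that $s$ is maximal, I would show the single-negative-eigenvalue property fails just past $s$: for $t > s$ the matrix $N_t$ has at least two negative eigenvalues, and restricting $x \mapsto x^\top M_t x$ to the span of two corresponding eigenvectors of $N_t$ (placed in the block $\{x_{n+1}=0\}$) gives a negative definite form on a two–dimensional subspace, so $M_t$, and hence $\tilde W_t$, has at least two negative eigenvalues. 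Combining these facts pins down $s = \tfrac1{1-\lambda_Q}$ as precisely the maximal parameter with the required inertia behaviour. The only mildly delicate point is the bookkeeping at $t=s$, where the positive semidefiniteness of $N_s$ and the identity $\tilde g^\top e_{n+1} = 0$ must be used together; everything else is routine inertia counting.
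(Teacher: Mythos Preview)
Your proposal is correct and follows essentially the same route as the paper: isolate the trailing $1\times 1$ block contributing a positive eigenvalue for $t>0$, and analyze the $(n+2)\times(n+2)$ block $M_t$ (the paper's $V_t$) via the spectrum of $N_t=(1-t)I_{n+1}+t\tilde Q$, exploiting that the minimum eigenvalue of $\tilde Q$ has multiplicity at least two. The only presentational difference is that the paper invokes Cauchy's interlacing theorem to bound the eigenvalues of $M_t$ between those of $N_t$, whereas you phrase the same inertia count more directly (positive semidefinite on a codimension-one subspace gives at most one negative eigenvalue; a two-dimensional negative-definite subspace gives at least two) and you verify invertibility on $(0,s)$ by an explicit Schur-complement computation rather than reading it off the interlacing inequalities; your version is slightly more self-contained, but the underlying argument is the same.
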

\begin{proof}
Define $\hat{s} := \frac{1}{1-\lambda_Q} \in (0,1)$. From \eqref{eqn:BKK-matrices-t}, note that $\tilde{W}_t$ has a block structure and $s$ is such that it equals to the smallest positive $t$ ensuring
\[
  V_t := (1-t) \begin{bmatrix} I_{n+1} & 0 \\ 0 & -1 \end{bmatrix}
  + t \begin{bmatrix} \tilde{Q} & \tilde{g} \\ \tilde{g}^\top  & 0 \end{bmatrix}
\]
is singular. 

Let $\lambda_{n+2,t},\lambda_{n+1,t}$ be the two smallest eigenvalues of $V_t$, and  $\rho_{n+1,t},\rho_{n,t}$ be the two smallest eigenvalues of $(1-t)I_{n+1} + t \tilde{Q}$. Notice that $(1-t)I_{n+1} + t \tilde{Q}$ has the same eigenvectors as $\tilde{Q}$, and the eigenvalues are simply scaled and shifted from those of $Q$, thus the minimum eigenvalue of $(1-t)I_{n+1} + t \tilde{Q}$ is $1-t + t\lambda_Q$ for $t \in (0,1)$. Also, by construction, the multiplicity of $\lambda_Q$ in $\tilde{Q}$ is at least two, so the multiplicity of the minimum eigenvalue of $(1-t)I_{n+1} + t \tilde{Q}$ is also at least two, therefore $\rho_{n+1,t} = \rho_{n,t} = 1-t + t\lambda_Q$.

For any $t \in (0,1)$, the last diagonal entry of $V_t$ is negative implying $V_t$ is not positive semidefinite, hence $\lambda_{n+2,t} < 0$. However, for $t \in (0,\hat{s})$, $\rho_{n+1,t} > 0$, and from Cauchy's interlacing theorem for eigenvalues \cite[Theorem 4.3.17]{Horn_Johnson_2013},  
we obtain 
\[ \lambda_{n+2,t} < 0 < \rho_{n+1,t} = \lambda_{n+1,t} = \rho_{n,t}, \quad t \in (0,\hat{s}).\]
Thus, for any $t \in (0,\hat{s})$, the matrix $V_t$, and hence $\hat{W}_t$, is invertible, and  $\hat{W}_t$ has exactly one negative eigenvalue. When $t=\hat{s}$, $\rho_{n+1,\hat{s}} = \rho_{n,\hat{s}} = 1-\hat{s} + \hat{s} \lambda_Q = 0$. By recalling that $\tilde{Q}:=\begin{bmatrix} Q & 0\\ 0 &\lambda_Q\end{bmatrix}$ and $\tilde{g} = [g;0]$, we immediately observe that $V_{\hat{s}}$, and thus $\tilde{W}_{\hat{s}}$, is singular since $V_{\hat{s}}$ has eigenvector $[y;\tilde{y}_{n+1};x_{n+1}] = [0;1;0]$ with eigenvalue $0$. Also,
\[\lambda_{n+2,\hat{s}} < 0 = \rho_{n+1,\hat{s}} = \lambda_{n+1,\hat{s}}  = \rho_{n,\hat{s}}\]
so $\tilde{W}_{\hat{s}}$ has exactly one negative eigenvalue. 
Moreover, for any $t>\hat{s}$, the minimum eigenvalue of $(1-t)I_n + t Q$ is $1-t + t\lambda_Q < 0$. Hence, for any $t>\hat{s}$, $\lambda_{n+2,t} \leq \rho_{n+1,t} = \lambda_{n+1,t} = \rho_{n,t}<0$ follows from \cite[Theorem 4.3.17]{Horn_Johnson_2013}. As a result $V_t$, and thus $\hat{W}_t$, has at least two negative eigenvalues. Therefore, $s=\hat{s} = \frac{1}{1-\lambda_Q}$ is the correct value.
\end{proof}

Choosing $\tilde{x}''=[y''; \tilde{y}_{n+1}'';x_{n+1}'';x_{n+2}'']$ with $y''=0$, $\tilde{y}_{n+1}''=1$, $x_{n+1}''=0$ and $x_{n+2}''=0$ ensures that $\tilde{x}'' \in \Null(\tilde{W}_s)$, $(\tilde{x}'')^\top  \tilde{W}_1 \tilde{x}'' < 0$, and $x_{n+1}''=0$. This simultaneously verifies Conditions \ref{cond:As_null} and \ref{cond:hyperplane}.

\end{document}